\title{Defining ideals of some numerical semigroup rings with  arithmetic pseudo-Frobenius numbers}
\author{KOU TAKAHASHI}
\theoremstyle{plain}
\newtheorem{thm}{Theorem}[section]
\newtheorem{lem}[thm]{Lemma}
\newtheorem{cor}[thm]{Corollary}
\newtheorem{conj}[thm]{Conjecture}
\newtheorem{exam}[thm]{Example}
\theoremstyle{definition}
\newtheorem{dfn}[thm]{Definition}
\theoremstyle{remark}
\newtheorem{rem}[thm]{Remark}
\theoremstyle{plain}
\newtheorem*{thm*}{Theorem}
\newtheorem*{lem*}{Lemma}
\newtheorem*{prop*}{Proposition}
\newtheorem*{cor*}{Corollary}
\newtheorem*{conj*}{Conjecture}
\newtheorem*{clm*}{Claim}
\theoremstyle{definition}
\newtheorem*{ass*}{Assumption}
\newtheorem*{dfn*}{Definition}
\theoremstyle{remark}
\newtheorem*{rem*}{Remark}
\DeclareMathOperator{\PF}{PF}
\DeclareMathOperator{\Ap}{Ap}
\DeclareMathOperator{\PFa}{PF_+}
\numberwithin{equation}{section}
\newcommand{\Addresses}{{
  \bigskip
  \footnotesize

  \textsc{Department of Mathematics,
Faculty of Education,
Waseda University,
1-6-1 Nishi-Waseda, Shinjuku, Tokyo 169-8050, Japan}\par\nopagebreak
  \textit{E-mail address}: \texttt{k\_takahash@akane.waseda.jp}

}}
\begin{document}

\begin{abstract}

In this paper, we study defining ideals of numerical semigroup rings.
Let $H$ be a numerical semigroup with multiplicity $a_0$ and embedding dimension $n$.
Assuming $a_0/2+1\leq n$, we prove that the defining ideal of $H$ is determinantal
when the set of pseudo-Frobenius numbers forms an arithmetic sequence of length $n-1$.
This partly resolves a conjecture of Cuong, Kien, Truong and, Matsuoka.
\end{abstract}

\maketitle

\section{Introduction}

Numerical semigroups and their semigroup rings provide an important class of rings in commutative algebra.
A fundamental problem on numerical semigroup rings is to compute generators of their defining ideals.
As an example,
Herzog \cite{Herzog1970} computed the defining ideal of a numerical semigroup ring with embedding dimension three.
However, when the embedding dimension is four or larger,
the structure of the defining ideal is not fully understood.
Recently the following interesting conjecture was posed by Cuong, Kien, Truong and, Matsuoka.

To introduce the conjecture, let us fix some notation.
Let $k$ be a field and 
\begin{equation*}
    H=\langle a_0,a_1,\ldots,a_{n-1}\rangle=\{\lambda_0a_0+\lambda_1a_1+\cdots+\lambda_{n-1}a_{n-1}:\lambda_0,\lambda_1,\ldots,\lambda_{n-1}\in\mathbb{Z}_{\geq0}\}
\end{equation*}
the numerical semigroup minimally generated by $a_0,a_1,\ldots,a_{n-1}$ with $a_0=\min(H\setminus\{0\})$.
The numerical semigroup ring of $H$, denoted by $k[H]$, is defined as 
\begin{equation*}
    k[H]:=k[t^{a_0},t^{a_1},\ldots,t^{a_{n-1}}]\subseteq k[t].
\end{equation*}
We put $S=k[X_0,X_1,\ldots,X_{n-1}]$ the polynomial ring over $k$ with grading $\deg X_i=a_i$,
and $\varphi:S\to k[H]$ the graded ring homomorphism defined by $\varphi(X_i)=t^{a_i}$ for each $0\leq i\leq n-1$.
The kernel of the homomorphism $\varphi$ is denoted by $I_H$.
For a matrix $M$ whose entries are in $S$,
$\mathrm{I}_2(M)$ denotes the ideal of $S$ generated by 2-minors of $M$.
Let $\PF(H)$ be the set of all pseudo-Frobenius numbers of $H$.
(See Section 2 for the definition of pseudo-Frobenius numbers.)
\begin{conj}\label{conj}
    $($Cuong-Kien-Truong-Matsuoka \cite[Conjecture 1.1]{Kien2025}$)$
    With the notation above, the following conditions are equivalent.
    \def\labelenumi{(\theenumi)}
    \begin{enumerate}
        \item $I_H=\mathrm{I}_2$
        $\begin{psmallmatrix}
            X_0^{l_0}&X_1^{l_1}&\cdots&X_{n-2}^{l_{n-2}}&X_{n-1}^{l_{n-1}}\\
            X_1^{m_1}&X_2^{m_2}&\cdots&X_{n-1}^{m_{n-1}}&X_0^{m_0}
        \end{psmallmatrix}$
    for some integers $l_0,l_1,\ldots,l_{n-1},m_0,m_1,\ldots,m_{n-1}>0$,
    after suitable permutations on $a_0,a_1,\ldots,a_{n-1}$.
    \item The set $\PF(H)$ forms an arithmetic sequence of length $n-1$.
    \end{enumerate}
\end{conj}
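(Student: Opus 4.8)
The plan is to prove the two implications separately, treating $(1)\Rightarrow(2)$ as the accessible half via the Eagon--Northcott resolution, and $(2)\Rightarrow(1)$ as the substantive half via a reconstruction of the matrix from the Apéry set, where the real difficulty concentrates at large multiplicity.

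For $(1)\Rightarrow(2)$, assume $I_H=\mathrm{I}_2(M)$ for the displayed $2\times n$ matrix $M$; the permutation of the $a_i$ is immaterial because $\PF(H)$ depends only on $H$, so I may take the generators in the given cyclic order. Since $I_H$ is homogeneous, every $2$-minor of $M$ must be homogeneous, and comparing the two terms of each minor forces the quantity $l_ja_j-m_{j+1}a_{j+1}$ (indices read cyclically mod $n$) to equal a constant $s$ independent of $j$; equivalently $M$ is a graded map $F\to G$ with $F=\bigoplus_j S(-l_ja_j)$ and $G=S\oplus S(-s)$. Because $I_H=\mathrm{I}_2(M)$ defines a one-dimensional Cohen--Macaulay domain, its height is $n-1$, and since the entries of $M$ lie in the maximal ideal the Eagon--Northcott complex of $M$ is the minimal graded free resolution of $k[H]$. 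Its top term is $\wedge^nF\otimes\operatorname{Sym}_{n-2}(G^{*})\otimes\wedge^2G^{*}$, of rank $n-1$ (recovering that the Cohen--Macaulay type equals $n-1$); computing its graded twists from these three factors shows that its free generators sit in degrees $L-s,\ L-2s,\ \dots,\ L-(n-1)s$, where $L=\sum_j l_ja_j$. Finally, graded local duality gives $\PF(H)=\{\,b-\sum_j a_j : S(-b)\text{ is a summand of the top term}\,\}$, so $\PF(H)$ is an arithmetic sequence of length $n-1$ with common difference $|s|$.

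For $(2)\Rightarrow(1)$, write $\PF(H)=\{f_1<\dots<f_{n-1}\}$ with $f_i=f_1+(i-1)d$. Using the description $\PF(H)=\{\,w-a_0 : w\text{ maximal in }\Ap(H,a_0)\,\}$, where $x\preceq y$ means $y-x\in H$, I would translate the arithmetic hypothesis into structural information on the $n-1$ maximal Apéry elements and the $\preceq$-order among all elements of $\Ap(H,a_0)$. The aim is then to exhibit a cyclic ordering of the generators together with positive exponents $l_j,m_j$ so that the $2$-minors of the resulting rolling matrix lie in $I_H$ and generate it. Membership is the routine part: each minor is a difference of two factorizations of a common element of $H$, which holds precisely because the reconstructed data make $l_ja_j-m_{j+1}a_{j+1}$ independent of $j$. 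Generation is the serious part: I would prove $\mathrm{I}_2(M)=I_H$ by noting $\mathrm{I}_2(M)\subseteq I_H$, that both ideals have height $n-1$, and then forcing equality through the Cohen--Macaulayness of the determinantal quotient together with a Hilbert-series (or minimal-relation) count comparing the surjection $S/\mathrm{I}_2(M)\twoheadrightarrow k[H]$ of one-dimensional rings.

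The step I expect to be the main obstacle is exactly this reconstruction-and-generation in full generality, that is, without the multiplicity restriction. When $a_0/2+1\le n$, equivalently $a_0\le 2(n-1)$, the Apéry set is small enough relative to the number of generators that the arithmetic progression of maximal elements rigidly pins down the cyclic adjacency of the generators and the exponent identities, so the relation count matches and generation follows. For large multiplicity, $a_0>2(n-1)$, the Apéry set carries many non-maximal elements, the cyclic order is no longer determined by the maximal elements alone, and additional minimal binomial relations may a priori intervene; controlling these, and thereby removing the hypothesis $a_0/2+1\le n$ to reach the full equivalence, is the crux I anticipate to be hardest.
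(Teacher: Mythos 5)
Your proposal cannot succeed as a proof of the statement, because the statement is an open conjecture: the paper itself does not prove it in full, but only the implication $(2)\Rightarrow(1)$ under the restriction $a_0/2+1\leq n$ (Theorem \ref{thm:1.mainthm}, via Theorem \ref{thm:3.claimA}), while $(1)\Rightarrow(2)$ is quoted from \cite{Kien2020}. Your sketch of $(1)\Rightarrow(2)$ is plausible and close in spirit to the literature (each $2$-minor is a binomial in the prime binomial ideal $I_H=\ker\varphi$, hence homogeneous, which forces $l_ja_j-m_{j+1}a_{j+1}$ to be constant; the Eagon--Northcott resolution and duality then read off $\PF(H)$ from the top twists). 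But for $(2)\Rightarrow(1)$ your text is a program, not a proof: the sentences ``the arithmetic progression of maximal elements rigidly pins down the cyclic adjacency of the generators and the exponent identities'' and ``the relation count matches and generation follows'' are precisely the content that occupies all of the paper's Section 3, and none of it is automatic. Concretely, what you would have to supply, and what the paper proves, is: well-definedness and use of the residue map $v_\alpha$ (requiring $\gcd(a_0,\alpha)=1$, Lemma \ref{lem:2.gcd}, which is where $a_0/2+1\leq n$ first enters); the three-block partition of the generators and the fact that mixed sums $a_l+a''_m$ never lie in $\Ap(H)$ (Lemma \ref{lem:3.a+a'notinAp}, which uses the numerical hypothesis $v_\alpha(a_i)+v_{-\alpha}(a''_1)<n-1$ in an essential way); the nested canonical expressions $p_m=a_m+\sum_{l=m}^i(c_l-1)a_l$ of the pseudo-Frobenius elements (Lemma \ref{lem:3.pfexpression}), which is what actually produces the exponents in the matrix; and the injectivity of canonical expressions giving the Ap\'ery count lower bound (Lemma \ref{lem:3.canonicalexpress}, Corollary \ref{cor:3.Apnumber}).

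There is also a specific technical flaw in your proposed generation step: you want to force $\mathrm{I}_2(M)=I_H$ from $\mathrm{I}_2(M)\subseteq I_H$ plus ``both ideals have height $n-1$'' plus Cohen--Macaulayness of the determinantal quotient. But for a rolling $2\times n$ matrix whose entries repeat variables, $\mathrm{I}_2(M)$ attaining the generic height $n-1$ (equivalently, the Eagon--Northcott complex being a resolution) is exactly as hard as the generation statement itself, so this is circular as stated; and even granting height $n-1$, $\mathrm{I}_2(M)\subseteq I_H$ with both prime-height-$(n-1)$ would need $\mathrm{I}_2(M)$ unmixed \emph{and} a length comparison to conclude equality. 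The paper sidesteps any a priori height or CM claim: it uses the Artinian criterion of Lemma \ref{prop:2.JinIH} ($J\subseteq I_H$ graded and $\dim_k S/(J+(X_1))=a_0$ imply $J=I_H$), verified by exhibiting an explicit monomial basis of $S/(\mathrm{I}_2(M)+(X_0))$ via an initial ideal and matching its cardinality against $\#\Ap(H)=a_0$ through the canonical-expression count. This is the effective version of the Hilbert-series comparison you gesture at, but it only closes because the structural lemmas above pin down both sides of the inequality. Your diagnosis of where the difficulty lies (the regime $a_0>2(n-1)$, where $\Ap(H)$ has many non-maximal elements) is accurate and matches the paper's hypothesis, but identifying the obstacle is not the same as overcoming it, and in the tractable regime your proposal still leaves every decisive step unexecuted.
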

The implication from (1) to (2) has already been established in \cite{Kien2020}.
On the other hand, the implication from (2) to (1) remains open in general, but has been verified in certain special cases,
including almost symmetric semigroups  \cite{Goto2018}, semigroups with maximal embedding dimension \cite{Kien2020},
generalized repunit numerical semigroups \cite{Rosales2015}
and, stretched numerical semigroup rings \cite{Kien2025}.

In this paper, we prove that this conjecture holds
when the embedding dimension $n$ of $H$ is relatively large comparing its multiplicity $a_0$.
\begin{thm}\label{thm:1.mainthm}
    Conjecture \ref{conj} holds when $a_0/2+1\leq n$.
\end{thm}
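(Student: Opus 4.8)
Since the implication $(1)\Rightarrow(2)$ is already available, the plan is to establish $(2)\Rightarrow(1)$ under the standing hypothesis $a_0\leq 2(n-1)$. The first move is to translate everything into the Apéry set $\Ap(H,a_0)$ equipped with the order $x\leq_H y\iff y-x\in H$. Each minimal generator $a_i$ with $i\geq 1$ satisfies $a_i-a_0\notin H$, so $a_1,\dots,a_{n-1}\in\Ap(H,a_0)$, and these are exactly the atoms (minimal nonzero elements) of the poset; dually, the maximal elements of $\Ap(H,a_0)$ are precisely $\{f+a_0:f\in\PF(H)\}$. Hypothesis $(2)$ gives $|\PF(H)|=n-1$, so $\Ap(H,a_0)\setminus\{0\}$ has $n-1$ atoms and $n-1$ maximal elements among its $a_0-1$ elements. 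Here the numerical hypothesis enters: since $(n-1)+(n-1)=2n-2\geq a_0$, an inclusion–exclusion count forces at least $2n-1-a_0\geq 1$ generators to be simultaneously atoms and maximal elements (i.e.\ $a_i-a_0\in\PF(H)$), and it bounds the ``interior'' of the poset (elements that are neither atoms nor maximal) by $a_0-n\leq n-2$. The conclusion of this step is that the Apéry poset is very thin, with almost every element either a generator or a maximal element.

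The second step is to convert this thinness, together with the \emph{arithmetic} structure of $\PF(H)=\{f,f+d,\dots,f+(n-2)d\}$, into the cyclic ladder underlying $(1)$. I would use the common difference $d$ to pair the consecutive pseudo-Frobenius numbers with a cyclic reordering $a_0,a_1,\dots,a_{n-1}$ of the generators, producing positive exponents $l_0,\dots,l_{n-1}$ and $m_0,\dots,m_{n-1}$ for which
\[
l_i a_i - m_{i+1}a_{i+1} = \delta \qquad (i\in\mathbb{Z}/n),
\]
a single constant $\delta$ independent of $i$ (the indices taken cyclically, so that $l_{n-1}a_{n-1}-m_0a_0=\delta$ as well). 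This constancy is exactly the condition that the $2\times n$ monomial matrix in $(1)$ have homogeneous maximal minors, with row degrees $r_1,r_2$ and column degrees $c_i$ satisfying $l_ia_i=r_1+c_i$ and $m_{i+1}a_{i+1}=r_2+c_i$ and $\delta=r_1-r_2$; the arithmetic progression of $\PF(H)$ reappears as the arithmetic progression of the canonical generators' degrees. I expect this passage to be the main obstacle: while the counting of the first step controls how many interior and coincident elements occur, organizing them into one consistent cycle compatible with a single difference $d$ (ruling out any mismatch in the pairing, and confirming the exponents are forced) is the technical heart of the argument.

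Granting the cyclic ladder, I would identify the resulting matrix with $M$ and prove $I_H=\mathrm{I}_2(M)$. The inclusion $\mathrm{I}_2(M)\subseteq I_H$ is immediate, since each $2$-minor is a binomial $X^{\alpha}-X^{\beta}$ whose two monomials have equal image under $\varphi$ by the relation $l_ia_i+m_{j+1}a_{j+1}=l_ja_j+m_{i+1}a_{i+1}$ read off from the ladder. For the reverse inclusion I would avoid chasing a minimal presentation and instead argue homologically: from the cyclic pattern of $M$ one checks $\operatorname{ht}\mathrm{I}_2(M)=n-1=\dim S-\dim k[H]$, the maximal possible codimension for $2$-minors of a $2\times n$ matrix, so the Eagon–Northcott complex resolves $S/\mathrm{I}_2(M)$ and exhibits it as a one-dimensional Cohen–Macaulay ring whose last Betti number is $\binom{n-1}{1}=n-1$, matching the Cohen–Macaulay type of $k[H]$ coming from $(2)$. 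In the exact sequence $0\to I_H/\mathrm{I}_2(M)\to S/\mathrm{I}_2(M)\to k[H]\to 0$ the Cohen–Macaulayness of $S/\mathrm{I}_2(M)$ rules out any finite-length part in the kernel, so a Hilbert-series computation from the Eagon–Northcott resolution giving $e(S/\mathrm{I}_2(M))=a_0=e(k[H])$ forces $I_H/\mathrm{I}_2(M)=0$.

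In summary, the outer structure—reduction to $(2)\Rightarrow(1)$, the counting that thins the Apéry poset, and the homological comparison that closes the equality—is essentially formal once the internal structure is in hand. The crux, and the step I expect to be genuinely delicate, is the second one: showing that the hypothesis of an \emph{arithmetic} (not merely $(n-1)$-element) $\PF(H)$, combined with the thinness afforded by $a_0\leq 2(n-1)$, rigidly determines a cyclic reordering of the generators and exponents obeying $l_ia_i-m_{i+1}a_{i+1}=\delta$, and that the associated $2$-minors are exactly the defining relations of $H$.
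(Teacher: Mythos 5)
Your proposal has the right architecture, and your first step is essentially the paper's own reduction: the paper proves a more general determinantal statement (Theorem \ref{thm:3.claimA}) and derives Theorem \ref{thm:1.mainthm} from it by exactly the kind of count you describe, namely $\#\bigl(\Ap(H)\setminus(\PFa(H)\cup\{0\})\bigr)=a_0-n<n-1$, which under $a_0\leq 2(n-1)$ forces the generators outside $\PFa(H)$ to sit ``close to'' the ends of the Ap\'ery set. But your second step is a genuine gap, not a proof: you state that the arithmetic hypothesis together with thinness should ``rigidly determine a cyclic reordering of the generators and exponents obeying $l_ia_i-m_{i+1}a_{i+1}=\delta$,'' and you yourself flag this as the technical heart that you do not carry out. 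That is precisely where all of the paper's work lies. Concretely, the paper sorts the generators by the residue map $v_\alpha$ (well defined only after proving $\gcd(a_0,\alpha)=1$, its Lemma \ref{lem:2.gcd}, a point your outline never addresses), splits $\Ap(H)$ into $\Ap_+(H)$, $\Ap_0(H)$, $\Ap_-(H)$, proves that no Ap\'ery element mixes the two ``outer'' families of generators (Lemma \ref{lem:3.a+a'notinAp}), and then, via a descent argument (Lemma \ref{prop:3.X-o,Y-o} and the Claim inside Lemma \ref{lem:3.pfexpression}), establishes the canonical expressions $p_m=a_m+\sum_{l=m}^i(c_l-1)a_l$ and their mirror images. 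These expressions are what produce the exponents $c_0,c_1,\ldots,c_k''$ and the matrix; nothing in your counting step forces them, and without them there is no matrix to analyze.

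The gap also propagates into your third step. Your homological closing argument (Eagon--Northcott, Cohen--Macaulayness from $\operatorname{ht}\mathrm{I}_2(M)=n-1$, then multiplicity comparison $e(S/\mathrm{I}_2(M))=a_0$) is a plausible alternative to the paper's method, but both the height claim and the multiplicity computation depend on the explicit exponents and on knowing that the monomials built from them account for all $a_0$ elements of $\Ap(H)$ (the paper's Lemma \ref{lem:3.canonicalexpress} and Corollary \ref{cor:3.Apnumber}); they cannot be ``checked from the cyclic pattern'' in the abstract, since for arbitrary exponents the multiplicity of $S/\mathrm{I}_2(M)$ need not equal $a_0$. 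The paper instead bounds $\dim_k S/(\mathrm{I}_2(M)+(X_0))$ above by a monomial (initial ideal) count and below by $a_0$, then invokes its Lemma \ref{prop:2.JinIH}; this simultaneously settles the height issue your route leaves open. So: correct skeleton, same reduction as the paper in step one, but the core of the theorem --- extracting the determinantal data from the arithmetic $\PF(H)$ hypothesis --- is missing, and the finishing step cannot be completed without it.
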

We actually prove a slightly more general statement (see Theorem \ref{thm:3.claimA}). 

The organization of this paper is as follows.
In Section 2, we review basic definitions and properties of
numerical semigroups and their invariants.
Section 3 consists of the proof of our main result.
In Section 4, we discuss further implication which supports Conjecture \ref{conj},
and give a few examples illustrating our main theorem.

\section{Preliminaries}

We introduce notations used in the paper.
Throughout this section, let $H=\langle a_0,a_1,\ldots,a_{n-1}\rangle$ be a numerical semigroup minimally generated by $a_0,a_1,\ldots,a_{n-1}$ with $n\geq3$.
The number $n$ is called the \textbf{embedding dimension} of $H$.
We always assume $a_0=\min (H\setminus \{0\})$, which is called the \textbf{multiplicity} of $H$. 
The kernel $I_H$ of homomorphism $\varphi:S\to k[H]$ is called the \textbf{defining ideal} of $H$. 

We now investigate numerical semigroups by means of Ap\'ery sets and pseudo-Frobenius numbers. 
To begin with, we recall their definitions.
The set 
\begin{equation*}
    \Ap(H)=\{a\in H:a-a_0\not\in H\}
\end{equation*}
is called the \textbf{Ap\'ery set} of $H$.
An integer $p\in \mathbb{Z}_{\geq0}\setminus H$ is called a \textbf{pseudo-Frobenius number} of $H$, if $p+h\in H$ for all $0 < h\in H$.
The set of all pseudo-Frobenius numbers of $H$ is denoted by $\PF(H)$.
The set $\PF(H)+a_0=\{p+a_0:p\in \PF(H)\}$ is denoted by $\PFa(H)$.
Note that $\PFa(H)\subseteq\Ap(H)$.
Let $a, b\in\mathbb{Z}$.
We define the partial order $\leq_H$ on $\mathbb{Z}$ by
$a\leq_Hb$ if $b-a\in H$.

Next, we present some known facts concerning Apéry sets and pseudo-Frobenius numbers.
\begin{lem}\label{lem:2.zinH}
    $($\cite[Proposition 2.19]{Rosales2015}$)$
    For an integer $z\in\mathbb{Z}, z\not\in H$ if and only if there exists $\alpha\in\PF(H)$ such that $\alpha-z\in H$.
\end{lem}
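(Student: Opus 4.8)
The plan is to use the characterization of pseudo-Frobenius numbers as the $\leq_H$-maximal elements of the gap set $\mathbb{Z}\setminus H$, together with the fact that a numerical semigroup has only finitely many gaps. The reverse implication is the immediate one, so I would dispose of it first; the forward implication is where a genuine (if short) argument is needed, namely producing the required $\alpha$ by a maximality argument.

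For the ``if'' direction, suppose $\alpha\in\PF(H)$ satisfies $\alpha-z\in H$. If $z$ were in $H$, then since both $z$ and $\alpha-z$ lie in $H$ and $H$ is closed under addition, I would obtain $\alpha=(\alpha-z)+z\in H$, contradicting $\alpha\in\PF(H)\subseteq\mathbb{Z}\setminus H$. Hence $z\notin H$. Note that this single argument simultaneously covers the cases $z<0$, $z=0$, and $z>0$, since $H\subseteq\mathbb{Z}_{\geq0}$.

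For the ``only if'' direction, assume $z\notin H$ and consider the set $T=\{h\in H:z+h\notin H\}$. It contains $0$, hence is nonempty, and the injective map $h\mapsto z+h$ sends $T$ into the finite gap set $\mathbb{Z}\setminus H$, so $T$ is finite. I would then pick $h^\ast\in T$ for which $\alpha:=z+h^\ast$ is largest, and claim $\alpha\in\PF(H)$. By construction $\alpha\notin H$ and $\alpha-z=h^\ast\in H$, so it only remains to verify the defining property of a pseudo-Frobenius number: that $\alpha+h\in H$ for every $0<h\in H$. If instead $\alpha+h\notin H$ for some such $h$, then $h^\ast+h\in H$ and $z+(h^\ast+h)=\alpha+h\notin H$ exhibit $h^\ast+h\in T$ with $z+(h^\ast+h)>\alpha$, contradicting the maximality of $\alpha$.

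The only real content lies in the $\Rightarrow$ direction, and the step I would treat most carefully is the existence of the maximal element, which relies precisely on the finiteness of $\mathbb{Z}\setminus H$, the defining feature of a numerical semigroup. Once finiteness is in hand, the maximality argument is forced and the verification that the maximizer is pseudo-Frobenius is routine. I expect no serious obstacle beyond ensuring that maximality is taken over the correct finite set, so that the strict inequality $z+(h^\ast+h)>\alpha$ genuinely yields a contradiction.
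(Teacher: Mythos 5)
The paper offers no proof of this lemma at all---it is quoted directly from the literature (\cite[Proposition 2.19]{Rosales2015})---so there is no internal argument to compare against. Your proof is the standard one (closure under addition for the ``if'' direction, a maximality argument for the ``only if'' direction) and its skeleton is correct, but two steps need tightening. First, the finiteness claim is misstated: $\mathbb{Z}\setminus H$ is \emph{infinite} (it contains every negative integer), so injecting $T$ into it proves nothing. What is true is that the image $\{z+h : h\in T\}$ lies in $(\mathbb{Z}\setminus H)\cap[z,\infty)$, and \emph{this} set is finite, since it consists of at most the integers in $[z,0)$ together with the finitely many nonnegative gaps of $H$; that is the finiteness your maximizer $h^\ast$ actually requires. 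Second, the paper's definition of a pseudo-Frobenius number demands $p\in\mathbb{Z}_{\geq 0}\setminus H$, and you never verify $\alpha=z+h^\ast\geq 0$ (note $z$ may be very negative). This is automatic in the paper's setting but deserves a line: if $\alpha<0$, then taking $h=a_0$ in the property you established gives $\alpha+a_0\in H$ with $\alpha+a_0<a_0$, forcing $\alpha=-a_0$; but then $\alpha+a_1=a_1-a_0$ would be a positive element of $H$, exhibiting $a_1=a_0+(a_1-a_0)$ and contradicting the minimality of the generator $a_1$ (this uses $n\geq 3$, i.e.\ $H\neq\mathbb{Z}_{\geq 0}$). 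With these two repairs the argument is complete.
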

\begin{lem}\label{lem:2.PFexpression}
    $($\cite[Proposition 8]{Assi2020}$)$
    The set $\PFa(H)$ is the set of maximal elements of $\Ap(H)$ with respect to $\leq_H$, that is, 
    \begin{equation*}
        \PFa(H)=\{a\in\Ap(H):b-a\not\in H ~for~all~b\in\Ap(H)\setminus\{a\} \}.
    \end{equation*}
\end{lem}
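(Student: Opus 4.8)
The plan is to establish the two inclusions separately, using nothing beyond the definition of $\Ap(H)$ and the defining condition of a pseudo-Frobenius number. The one elementary fact I will lean on repeatedly is that, for $x\in H$, one has $x\notin\Ap(H)$ if and only if $x-a_0\in H$; this is immediate from the definition $\Ap(H)=\{a\in H:a-a_0\notin H\}$. The maximality condition in the displayed set is simply the statement that no $b\in\Ap(H)\setminus\{a\}$ satisfies $a<_H b$.

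First I would prove $\PFa(H)\subseteq\{a\in\Ap(H):b-a\notin H\text{ for all }b\in\Ap(H)\setminus\{a\}\}$. Take $a=p+a_0$ with $p\in\PF(H)$; as already noted in the text, $a\in\Ap(H)$, since $a=p+a_0\in H$ while $a-a_0=p\notin H$. For maximality, suppose toward a contradiction that some $b\in\Ap(H)\setminus\{a\}$ satisfies $b-a\in H$. Then $b-a$ is a \emph{positive} element of $H$, so the pseudo-Frobenius property of $p$ gives $b-a_0=(b-a)+p\in H$, contradicting $b\in\Ap(H)$. Hence $a$ is maximal.

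For the reverse inclusion, let $a\in\Ap(H)$ be maximal and set $p:=a-a_0$. Since $a_0\geq2$ we have $\Ap(H)\neq\{0\}$, and because every element of $\Ap(H)$ lies in $H$ the element $0$ is never maximal; thus $a\neq0$, whence $a\geq a_0$ and $p\geq0$, while $p\notin H$ follows from $a\in\Ap(H)$. It remains to check $p+h\in H$ for every positive $h\in H$, and the crux is to verify this on the minimal generators. Fix $a_i$ and consider $a+a_i\in H$. If $a+a_i$ lay in $\Ap(H)$, then it would be an element of $\Ap(H)\setminus\{a\}$ with $(a+a_i)-a=a_i\in H$, contradicting maximality of $a$; therefore $a+a_i\notin\Ap(H)$, and by the elementary fact above $p+a_i=(a+a_i)-a_0\in H$. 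Finally, writing a positive $h\in H$ as $h=a_i+h'$ with $h'\in H$, we obtain $p+h=(p+a_i)+h'\in H$, so $p\in\PF(H)$ and $a=p+a_0\in\PFa(H)$.

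The argument is mostly conceptual rather than computational, so I expect no serious obstacle; the two points requiring care are excluding $a=0$ (to guarantee $p\geq0$, which is where $a_0\geq2$ enters) and the reduction from an arbitrary positive $h\in H$ to the generators $a_i$. The real engine is the equivalence ``$a+a_i\notin\Ap(H)\iff p+a_i\in H$'', which translates maximality with respect to $\leq_H$ into the inequality defining a pseudo-Frobenius number. One could instead detect non-membership in $H$ via Lemma \ref{lem:2.zinH}, but the direct route above is cleaner and entirely self-contained.
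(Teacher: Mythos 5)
Your proof is correct, but there is nothing in the paper to compare it against: the paper states this lemma purely as a quotation of \cite[Proposition 8]{Assi2020} and gives no proof of its own, using it as an imported fact. Your argument therefore supplies a self-contained proof, and it is essentially the standard one. Both inclusions are sound. For $\PFa(H)\subseteq\{\text{maximal elements}\}$ you correctly turn a hypothetical comparison $b-a\in H$ (with $b\in\Ap(H)\setminus\{a\}$, hence $b-a>0$) into $b-a_0=(b-a)+p\in H$, contradicting $b\in\Ap(H)$. For the converse, the engine is the equivalence, valid for $x\in H$, that $x\notin\Ap(H)$ if and only if $x-a_0\in H$; this converts maximality of $a$ into $p+a_i\in H$ for every minimal generator $a_i$, and the reduction of an arbitrary positive $h\in H$ to the generator case via $h=a_i+h'$ with $h'\in H$ is exactly what makes the pseudo-Frobenius condition checkable. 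The two delicate points you flag are handled properly: excluding $a=0$ requires $\Ap(H)\neq\{0\}$, i.e.\ $a_0\geq2$, which is guaranteed under the paper's standing assumption $n\geq3$ (indeed $a_0\geq n$); and $p\geq 0$ then follows from $a\geq a_0$, as needed for the paper's definition of $\PF(H)$ inside $\mathbb{Z}_{\geq0}\setminus H$. What your route buys is self-containedness of one of the paper's imported ingredients; what the citation buys the paper is brevity in Section 2.
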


\begin{cor}\label{cor:2.+eta}
    For all $a\in\Ap(H)$, there exists $\eta\in\Ap(H)$ such that $a+\eta\in\PFa(H)$.
\end{cor}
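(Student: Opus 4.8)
The plan is to produce the desired $\eta$ by climbing upward from $a$ inside the Apéry set along the order $\leq_H$ until reaching a maximal element, which by Lemma \ref{lem:2.PFexpression} must belong to $\PFa(H)$. Concretely, I would fix $a\in\Ap(H)$ and consider the subset
\begin{equation*}
    A=\{c\in\Ap(H):c-a\in H\}
\end{equation*}
of all elements of $\Ap(H)$ lying above $a$ with respect to $\leq_H$. This set contains $a$ itself (since $a-a=0\in H$), hence is nonempty, and it is finite because $\Ap(H)$ is finite, having exactly $a_0$ elements. Since $\leq_H$ is a partial order on $\mathbb{Z}$ (antisymmetry follows from the fact that every element of $H$ is nonnegative), the finite nonempty poset $A$ possesses a maximal element, which I will call $c^*$.

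The next step is to upgrade the maximality of $c^*$ within $A$ to maximality within all of $\Ap(H)$. Suppose some $d\in\Ap(H)\setminus\{c^*\}$ satisfied $c^*\leq_H d$. Combining $a\leq_H c^*$ and $c^*\leq_H d$ by transitivity—that is, writing $d-a=(d-c^*)+(c^*-a)$ as a sum of two elements of $H$—would give $d\in A$ with $c^*<_H d$, contradicting the choice of $c^*$. Hence $c^*$ is a maximal element of $\Ap(H)$ with respect to $\leq_H$, and Lemma \ref{lem:2.PFexpression} yields $c^*\in\PFa(H)$.

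Finally I would set $\eta:=c^*-a$, which lies in $H$ because $c^*\in A$, and verify that in fact $\eta\in\Ap(H)$. This last verification is the step I expect to require the most care, and it is the crux of the argument. Arguing by contradiction, if $\eta-a_0\in H$, then $c^*-a_0=(\eta-a_0)+a$ would be a sum of two elements of $H$ and hence lie in $H$, contradicting $c^*\in\Ap(H)$. Therefore $\eta-a_0\notin H$, so $\eta\in\Ap(H)$, and $a+\eta=c^*\in\PFa(H)$, which is exactly what we wanted. The only genuinely delicate point is this interplay between the Apéry condition on $c^*$ and the decomposition $c^*=a+\eta$; everything else is a routine maximal-element argument.
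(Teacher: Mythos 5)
Your proof is correct and follows exactly the route the paper intends: the corollary is stated as an immediate consequence of Lemma \ref{lem:2.PFexpression}, and your argument---climb from $a$ to a maximal element $c^*$ of the finite poset $(\Ap(H),\leq_H)$, identify $c^*\in\PFa(H)$ via that lemma, and check $\eta=c^*-a\in\Ap(H)$ by noting $\eta-a_0\in H$ would force $c^*-a_0\in H$---is precisely the omitted verification, carried out carefully.
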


\begin{lem}\label{lem:2.alphanotinH}
    $($\cite[Lemma 2.5]{Kien2020}$)$
    Suppose $n\geq3$. 
    If there exists $h\geq0$ and $\alpha\in\mathbb{Z}\setminus\{0\}$ such that $\PF(H)=\{h+\alpha,h+2\alpha,\ldots,h+(n-1)\alpha\}$, 
    then $\alpha\not\in H$.
\end{lem}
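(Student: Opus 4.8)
The plan is to argue by contradiction, exploiting the single defining property of a pseudo-Frobenius number: adding any \emph{positive} element of $H$ to such a number produces an element of $H$. Everything follows from the observation that consecutive terms of the given progression differ by exactly $\alpha$.

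First I would dispose of the sign of $\alpha$. Since a numerical semigroup satisfies $H\subseteq\mathbb{Z}_{\geq0}$, if $\alpha<0$ then $\alpha\notin H$ automatically and there is nothing to prove. Hence I may assume $\alpha>0$, in which case the pseudo-Frobenius numbers are listed in strictly increasing order as $h+\alpha<h+2\alpha<\cdots<h+(n-1)\alpha$, and in particular they are $n-1$ distinct integers.

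Next, suppose toward a contradiction that $\alpha\in H$. Because $n\geq3$, the progression has at least two terms, so both $p_1:=h+\alpha$ and $p_2:=h+2\alpha$ lie in $\PF(H)$. Since $p_1$ is a pseudo-Frobenius number and $0<\alpha\in H$, the defining property gives $p_1+\alpha\in H$. But $p_1+\alpha=h+2\alpha=p_2$, while $p_2\in\PF(H)\subseteq\mathbb{Z}_{\geq0}\setminus H$ forces $p_2\notin H$. This contradiction shows $\alpha\notin H$, completing the argument.

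I do not expect a genuine obstacle here, as the whole proof rests on the fact that a positive semigroup element moves each pseudo-Frobenius number onto the next one in the progression, which cannot happen since the next one is again a gap. The only points requiring mild care are the reduction to the case $\alpha>0$ (using that $H$ contains no negative integers) and the use of $n\geq3$ to guarantee that the progression really does contain two consecutive terms $p_1,p_2$ to compare.
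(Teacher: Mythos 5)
Your proof is correct and follows essentially the same argument as the paper: dismiss $\alpha<0$ using $H\subseteq\mathbb{Z}_{\geq0}$, then for $\alpha>0$ derive the contradiction that $(h+\alpha)+\alpha=h+2\alpha$ would lie in $H$ while being a pseudo-Frobenius number. The only difference is cosmetic — you spell out the role of $n\geq3$ and the gap property of $h+2\alpha$ a bit more explicitly than the paper does.
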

\begin{proof}
    The case $\alpha<0$ is obvious since $H\subset \mathbb{Z}_{\geq0}$.
    Suppose that $\alpha>0$ and $\alpha\in H$.
    Then $(h+\alpha)+\alpha\in H$ since $\alpha>0$ and $h+\alpha\in\PF(H)$.
    This contradicts $h+2\alpha\in H$.
    Hence, $\alpha\not\in H$.
\end{proof}
\begin{lem}\label{lem:2.ciai,diai}
    Let $s,t\in H$.
    If $s\in \Ap(H)$ and $t\leq_H s$,
    then $t\in\Ap(H)$.
\end{lem}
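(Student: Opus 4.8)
The plan is to argue by contradiction, after first unwinding all three pieces of notation into additive membership statements about $H$. By definition, $s\in\Ap(H)$ means $s-a_0\notin H$, while $t\leq_H s$ means $s-t\in H$. Since $t\in H$ is assumed, the desired conclusion $t\in\Ap(H)$ is, by the definition of the Apéry set, equivalent to the single assertion $t-a_0\notin H$. So the whole lemma reduces to proving $t-a_0\notin H$ from the two hypotheses $s-a_0\notin H$ and $s-t\in H$.

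The key step is then a one-line additive computation. Suppose for contradiction that $t-a_0\in H$. Combining this with $s-t\in H$, I would write
\begin{equation*}
    s-a_0=(s-t)+(t-a_0),
\end{equation*}
which exhibits $s-a_0$ as a sum of two elements of $H$. Since $H$ is closed under addition, this forces $s-a_0\in H$, contradicting $s\in\Ap(H)$. Hence no such decomposition can exist, so $t-a_0\notin H$, and therefore $t\in\Ap(H)$.

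There is essentially no serious obstacle here: the argument rests entirely on the closure of $H$ under addition together with the two defining conditions. The only point requiring minor care is the use of the hypothesis $t\in H$, since it is precisely this that makes ``$t\notin\Ap(H)$'' equivalent to ``$t-a_0\in H$'', so that the contradiction is set up correctly. In spirit this records that $\Ap(H)$ is downward closed with respect to $\leq_H$, a fact that will be convenient later when passing between an element of the Apéry set and its $\leq_H$-predecessors.
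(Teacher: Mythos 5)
Your proof is correct and takes essentially the same route as the paper: both argue by contradiction, shifting a witness of $t\not\in\Ap(H)$ by the element $s-t\in H$ to produce a contradiction with $s\in\Ap(H)$. The only difference is cosmetic: the paper picks a general witness $b\in H$ with $b\equiv t \mod a_0$ and $b<t$ and concludes via the minimal-in-residue-class characterization of the Ap\'ery set, whereas you specialize to $b=t-a_0$ and use the literal definition $\Ap(H)=\{a\in H: a-a_0\notin H\}$, which makes the identity $s-a_0=(s-t)+(t-a_0)$ close the argument in one line and keeps the proof self-contained.
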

\begin{proof}
    Suppose contrary that $t\not\in\Ap(H)$.
    Then, there exists $b\in H$ such that $b\equiv t \mod a_0$ and $b<t$.
    Thus, we have $b+(s-t)<t+(s-t)=s$.
    Since $b+(s-t)\in H$ and $b+(s-t)\equiv s\mod a_0$, we obtain $s\not\in\Ap(H)$, which is a contradiction.
\end{proof}

\begin{lem}\label{lem:2.gcd}
    Suppose that $\PF(H)=\{h+\alpha,h+2\alpha,\ldots,h+(n-1)\alpha\}$.
    If $a_0/2+1\leq n$, then $\gcd(a_0,\alpha)=1$.
\end{lem}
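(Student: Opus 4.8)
The plan is to combine a residue–counting argument modulo $a_0$ with the order-theoretic description of $\PFa(H)$. Write $g=\gcd(a_0,\alpha)$; since $\gcd(a_0,\alpha)=\gcd(a_0,|\alpha|)$ I may assume $\alpha>0$. The starting point is that $\PFa(H)\subseteq\Ap(H)$ and that $\Ap(H)$ contains exactly one element in each residue class modulo $a_0$, so the $n-1$ elements $h+j\alpha+a_0$ $(1\le j\le n-1)$ of $\PFa(H)$ must lie in $n-1$ distinct classes. These classes all sit in the coset $h+\langle\alpha\rangle$ of the subgroup $\langle\alpha\rangle\le\mathbb{Z}/a_0\mathbb{Z}$, which has order $a_0/g$. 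Distinctness therefore forces $n-1\le a_0/g$, and combined with the hypothesis $a_0/2\le n-1$ this gives $a_0/g\ge a_0/2$, i.e.\ $g\le 2$. So it remains only to exclude $g=2$.

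Next I would analyze the borderline case $g=2$. Here $a_0$ is even, the two inequalities collapse to $n-1=a_0/2$, and $\langle\alpha\rangle$ is exactly the subgroup of even residues; hence the residues of $\PFa(H)$ fill the whole coset $h+\{\text{even residues}\}$, so that $\PFa(H)$ consists of precisely the Ap\'ery elements whose residue has the same parity as $h$. I first rule out $h$ even: if $h$ were even then some element of $\PFa(H)$ would be $\equiv 0\pmod{a_0}$, but the unique Ap\'ery representative of the class of $0$ is $0$ itself, while $0\notin\PFa(H)$ because $\PF(H)\subseteq\mathbb{Z}_{\ge0}$ forbids $-a_0\in\PF(H)$. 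Thus $h$ is odd, and $\PFa(H)$ is exactly the set of odd elements of $\Ap(H)$.

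The final contradiction, which I expect to be the main obstacle, comes from inspecting an even Ap\'ery element. Since $a_0=2(n-1)$ and $n\ge 3$, there are $n-2\ge 1$ nonzero even residue classes besides that of $0$, so there exists a nonzero even $a\in\Ap(H)$. Applying Corollary~\ref{cor:2.+eta} yields $\eta\in\Ap(H)$ with $a+\eta\in\PFa(H)$; as $a+\eta$ is odd and $a$ is even, $\eta$ is odd, whence $\eta\in\PFa(H)$ by the previous paragraph. Setting $p:=\eta-a_0\in\PF(H)$, we obtain $(a+\eta)-a_0=p+a\in\PF(H)$; but $p\in\PF(H)$ together with $a\in H\setminus\{0\}$ forces $p+a\in H$, contradicting $p+a\in\PF(H)\subseteq\mathbb{Z}\setminus H$. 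This excludes $g=2$ and leaves $g=1$. The delicate point is organizing the $g=2$ case so that a nonzero even Ap\'ery element can be fed into Corollary~\ref{cor:2.+eta}; once $\PFa(H)$ is identified with the odd Ap\'ery elements, the contradiction is immediate from the defining property of pseudo-Frobenius numbers.
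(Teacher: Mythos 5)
Your proof is correct, and it is in fact more complete than the paper's own argument. Your first paragraph is the paper's proof in slightly different clothing: the paper sets $k=a_0/\gcd(a_0,\alpha)$ (the order of $\alpha$ in $\mathbb{Z}/a_0\mathbb{Z}$), observes $k\le n-1$ from $a_0/2<n$, and derives a contradiction because $p_{k+1}=p_1+k\alpha$ and $p_1$ would be two distinct elements of $\PFa(H)\subseteq\Ap(H)$ lying in the same residue class --- exactly your count $n-1\le a_0/g$ forcing $g\le 2$. However, the paper obtains its contradiction only ``if $k<n-1$'' and then stops: the borderline case $k=n-1$, which is precisely your case $g=2$ with $a_0=2(n-1)$, is never treated, and in that case the residues of $h+\alpha,\ldots,h+(n-1)\alpha$ genuinely are pairwise distinct, so residue counting alone yields no contradiction. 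Your second and third paragraphs supply exactly this missing step: ruling out $h$ even via the class of $0$ (whose unique Ap\'ery representative is $0$, while $0\notin\PFa(H)$), identifying $\PFa(H)$ with the odd Ap\'ery elements, and then feeding a nonzero even Ap\'ery element (which exists since $n\ge 3$ gives $n-2\ge 1$ nonzero even classes) into Corollary \ref{cor:2.+eta} to produce $p\in\PF(H)$ and $0<a\in H$ with $p+a\in\PF(H)$, contradicting the defining property of pseudo-Frobenius numbers since $p+a$ must lie in $H$. I checked each of these steps and they are sound; note in particular that your parity conclusion forces $\eta$ odd, hence $\eta\neq 0$, so the degenerate choice $\eta=0$ in Corollary \ref{cor:2.+eta} causes no trouble. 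In short, your proposal subsumes the paper's counting argument and additionally closes a genuine gap in the paper's own proof of this lemma.
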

\begin{proof}
    Let $l=\gcd(a_0,\alpha)$.
    Suppose contrary that $l>1$.
    Then there exists $k>1$ such that $a_0=kl$.
    From $kl/2=a_0/2<n$, it follows that the integer $k$ is at most $n-1$.
    Let $p_k=h+k\alpha$ for $k=1,2,\ldots,n-1$.
    If $k<n-1$, then it follows that $p_{k+1}=p_1+k\alpha\equiv p_1$.
    This contradicts the fact that, for $l=0,1,\ldots,a_0-1$, $\Ap(H)$ contains exactly one element $\sigma$ such that $\sigma\equiv l\mod a_0$.
\end{proof}
\begin{dfn}
    Suppose that $\gcd(a_0,\alpha)=1$.
    For any $k\in\mathbb{Z}$, there exists $0\leq i \leq a_0-1$ such that $k\equiv i\alpha\mod a_0$.
    Then we define the map $v_\alpha^{a_0}:\mathbb{Z}\to\{0,1,\ldots,a_0-1\}$ by $v_\alpha^{a_0}(k)=i$.
\end{dfn}
For any $a,b\in\mathbb{Z}$, if $v_\alpha^{a_0}(a)+v_\alpha^{a_0}(b)\leq a_0-1$, then $v_\alpha^{a_0}(a+b)=v_\alpha^{a_0}(a)+v_\alpha^{a_0}(b)$,
and if $v_\alpha^{a_0}(a)-v_\alpha^{a_0}(b)\geq0$, then $v_\alpha^{a_0}(a)-v_\alpha^{a_0}(b)=v_\alpha^{a_0}(a-b)$.
By a simple observation, we see that $v_\alpha^{a_0}(a)+v_{-\alpha}^{a_0}(a)=a_0$ for any integer $a\not\in a_0\mathbb{Z}$.

Next, we introduce some lemmas which will be used to determine the  defining ideal $I_H$ of $k[H]$.
\begin{lem}\label{prop:2.JinIH}
    $($\cite[Proposition 2.10]{Kien2025}$)$
    Let $J$ be a graded ideal of $S=k[X_0,X_1,\ldots,X_{n-1}]$ such that $J\subseteq I_H$.
    If $\dim S/(J+(X_1))=a_0$, then $I_H=J$.
\end{lem}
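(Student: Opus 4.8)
The plan is to deduce $J=I_H$ from a general principle about homogeneous ideals: if $J\subseteq I$ are homogeneous ideals of $S$ and $f\in S$ is a homogeneous nonzerodivisor modulo $I$ with $\dim_k S/(J+(f))=\dim_k S/(I+(f))<\infty$, then $J=I$. I would apply this with $I=I_H$ and $f=X_1$, so the entire task reduces to checking the hypotheses of the principle and then running the two-step mechanism it encodes (an equal-dimension collapse followed by a descent by degree).

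First I would record the structural facts that make the principle applicable. Since $k[H]=S/I_H$ is a domain, $I_H$ is a prime ideal; and since $\varphi(X_1)=t^{a_1}\neq0$, the image $\bar{X}_1$ of $X_1$ in $k[H]$ is a nonzero element of a domain, hence a nonzerodivisor, and in particular $X_1\notin I_H$. Next I would identify the comparison length. The isomorphism $S/I_H\cong k[H]$ induces $S/(I_H+(X_1))\cong k[H]/\bar{X}_1k[H]$, which is a finite-dimensional $k$-vector space whose basis is indexed by an Apéry set; its dimension is readily computed to be $a_0$, the number of elements of $\Ap(H)$. Thus the hypothesis $\dim_k S/(J+(X_1))=a_0$ can be read as the equality $\dim_k S/(J+(X_1))=\dim_k S/(I_H+(X_1))$.

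With these in hand, the inclusion $J+(X_1)\subseteq I_H+(X_1)$ yields a graded surjection $S/(J+(X_1))\twoheadrightarrow S/(I_H+(X_1))$ of $k$-vector spaces of the same finite dimension, which is therefore an isomorphism. Consequently $J+(X_1)=I_H+(X_1)$, and in particular $I_H\subseteq J+(X_1)$. To upgrade this to $I_H\subseteq J$ I would induct on degree, using that $I_H$ is homogeneous and prime. Let $x\in I_H$ be homogeneous of degree $d$. If $d<a_1$ then $(J+(X_1))_d=J_d$, so $x\in J$ outright; if $d\geq a_1$ we write $x=j+X_1g$ with $j\in J$ and $g$ homogeneous of degree $d-a_1$, whence $X_1g=x-j\in I_H$, and primeness together with $X_1\notin I_H$ forces $g\in I_H$. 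Since $\deg g<d$, the inductive hypothesis gives $g\in J$ and hence $x=j+X_1g\in J$. This proves $I_H\subseteq J$, and combined with $J\subseteq I_H$ we conclude $J=I_H$.

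The surjection-plus-descent mechanism itself is routine; the step that carries the real content is the length identification $\dim_k S/(I_H+(X_1))=a_0$, that is, recognizing the quotient as $k[H]$ modulo the image of $X_1$ and counting its Apéry-set basis so that it agrees with the hypothesized value. Once the two sides of the comparison are known to have the same finite $k$-dimension, the surjection collapses to an isomorphism and the equality of ideals is forced; the prime-ness of $I_H$ is exactly what allows the degree induction to clear the extra variable $X_1$.
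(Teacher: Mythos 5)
The paper itself offers no proof of this lemma (it is quoted from \cite[Proposition 2.10]{Kien2025}), so your attempt can only be measured against the standard argument, and your two-step mechanism is indeed the right one: from $J\subseteq I_H$ one gets a graded surjection $S/(J+(f))\twoheadrightarrow S/(I_H+(f))$, equality of finite $k$-dimensions collapses it to an isomorphism and gives $J+(f)=I_H+(f)$, and then descent by degree, using that $I_H$ is prime and $f\notin I_H$, clears the variable and yields $I_H\subseteq J$. That part of your write-up (including the base case $d<\deg f$ of the induction) is complete and correct.

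The genuine problem is the length identification, which you yourself single out as ``the step that carries the real content.'' You claim $\dim_k S/(I_H+(X_1))\cong \dim_k k[H]/\bar{X}_1k[H]=a_0=\#\Ap(H)$. This is false: $k[H]/\bar{X}_1k[H]$ has $k$-basis $\{t^h: h\in H,\ h-a_1\notin H\}$, i.e.\ it is indexed by the Ap\'ery set of $H$ \emph{with respect to $a_1$}, which has exactly $a_1$ elements. The set $\Ap(H)$ of this paper, with its $a_0$ elements, is the Ap\'ery set with respect to the multiplicity $a_0$, and it indexes a basis of $k[H]/(t^{a_0})\cong S/(I_H+(X_0))$, not of the quotient by $X_1$; you have conflated the two Ap\'ery sets. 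Since every minimal generator other than $a_0$ exceeds the multiplicity, $a_1>a_0$, so the two dimensions you equate are never equal; in fact $\dim_k S/(J+(X_1))\geq a_1>a_0$ for every graded $J\subseteq I_H$, which means the lemma as literally transcribed in this paper is vacuous. What your mismatch actually exposes is a typo in the statement: the variable should be $X_0$, which is exactly how the lemma is invoked in the proofs of Theorem \ref{thm:3.claimA} and Theorem \ref{thm:3.2claim2}, where the paper passes to $S/(\mathrm{I}_2(M)+(X_0))$ and compares its $k$-dimension with $a_0$. With $X_1$ replaced by $X_0$ throughout, your dimension count and hence your entire proof become correct; as written, the central equality is wrong, and the discrepancy should have been flagged rather than asserted away.
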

\begin{lem}\label{lem:2.I2M<IH}
    $($\cite[Corollary 2.12]{Kien2025}$)$
    Let $f_0,f_1,\ldots,f_{n-1},g_0,g_1,\ldots,g_{n-1}\in S$ 
    and let
    \begin{equation*}
        M=
        \begin{pmatrix}
            f_0&f_1&\cdots&f_{n-1}\\
            g_0&g_1&\cdots&g_{n-1}
        \end{pmatrix}.
    \end{equation*}
    If 
    \begin{equation*}
        \deg \varphi(g_0)-\deg \varphi(f_0)=\cdots = \deg \varphi(g_{n-1})-\deg \varphi(f_{n-1}),
    \end{equation*}
    then $\mathrm{I}_2(M)\subseteq I_H$.
\end{lem}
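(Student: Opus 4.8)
The plan is to reduce the containment to its generators and then exploit that $\varphi$ is graded. Since the $2$-minors $\Delta_{ij}:=f_ig_j-f_jg_i$ for $0\le i<j\le n-1$ generate $\mathrm{I}_2(M)$, it suffices to show $\varphi(\Delta_{ij})=0$ for every such pair. I would use that, in the setting where the entries of $M$ are monomials (which is the case in every application of this lemma, e.g.\ the matrix appearing in Conjecture \ref{conj}), the image under $\varphi$ of a monomial is a single power of $t$ whose exponent equals its degree in $k[H]$; thus I may write $\varphi(f_k)=t^{d_k}$ and $\varphi(g_k)=t^{e_k}$, where $d_k=\deg\varphi(f_k)$ and $e_k=\deg\varphi(g_k)$.

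I would then compute $\varphi(\Delta_{ij})=\varphi(f_i)\varphi(g_j)-\varphi(f_j)\varphi(g_i)=t^{d_i+e_j}-t^{d_j+e_i}$, and invoke the hypothesis at exactly this point. Writing $\delta$ for the common value of $e_k-d_k$, the first exponent is $d_i+e_j=d_i+d_j+\delta$ and the second is $d_j+e_i=d_j+d_i+\delta$, so the two exponents coincide. Hence the two monomials cancel, giving $\varphi(\Delta_{ij})=0$, that is $\Delta_{ij}\in I_H$. As this holds for all $i<j$, I conclude $\mathrm{I}_2(M)\subseteq I_H$.

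The main obstacle here is conceptual rather than computational: the step upgrading ``the two products $f_ig_j$ and $f_jg_i$ have equal degree'' to ``their images are equal monomials'' relies on each $\varphi(f_k),\varphi(g_k)$ being a pure power of $t$, i.e.\ on monomiality of the entries. Equivalently, the common-difference hypothesis is exactly the assertion that, after applying $\varphi$, the second row of $M$ is the first row scaled by the single monomial $t^{\delta}$, and it is this proportionality that forces all $2$-minors into $\ker\varphi$. Without monomiality two homogeneous products of equal degree need not agree, so the degree condition alone would not suffice, which is why the verification must be carried out term by term on the pure powers of $t$.
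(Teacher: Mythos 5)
Your proof is correct and is the natural argument; note there is nothing in the paper to compare it against, since Lemma \ref{lem:2.I2M<IH} is quoted without proof from \cite[Corollary 2.12]{Kien2025}, and your route --- reducing to the generating $2$-minors and checking $\varphi(f_ig_j-f_jg_i)=t^{d_i+e_j}-t^{d_j+e_i}=0$ because the common difference $\delta=e_k-d_k$ forces $d_i+e_j=d_j+e_i$ --- is exactly how the lemma is meant to be used in Section 3, where every entry of the matrices $(M_+'~M_0~M_-')$ and $M$ is a monic monomial. Your closing caveat is substantive rather than cosmetic: for entries that are not monic monomials the statement as transcribed is actually false --- for instance in $H=\langle 3,4,5\rangle$ the three columns $(X_0,\,X_0+X_1)$, $(X_1,\,X_2)$, $(X_2,\,X_0^2)$ all have degree difference $1$, yet the first minor $X_0X_2-X_0X_1-X_1^2$ maps to $-t^7\neq 0$, and even homogeneous entries fail once a scalar coefficient is introduced --- so restricting to monic monomial entries is not a loss of generality in your argument but the only reading of the hypothesis under which the lemma holds.
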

\section{Proof of the Main theorem}

In the following, let $H$ be a numerical semigroup with embedding dimension $n\geq3$ and multiplicity $a_0$.
Assume that there exist $h>0$ and $\alpha\neq0$ such that $\PF(H)=\{h+\alpha,h+2\alpha,\ldots,h+(n-1)\alpha\}$ and $\gcd(a_0,\alpha)=1$.
We denote  $v_\alpha^{a_0}$ by $v_\alpha$.
Let
\begin{equation*}
    G=\{a_0,a_1,a_2,\ldots,a_i,a_1',a_2',\ldots,a_j',a_1'',a_2'',\ldots,a_k''\}
\end{equation*}
be the minimal system of generators of $H$, 
where
\begin{itemize}
    \item $G\cap\PFa(H)=\{a_1',a_2',\ldots,a_j'\}$.
    \item $v_\alpha(a_0)=0<v_\alpha(a_1)<\cdots<v_\alpha(a_i)<v_\alpha(a_1')<\cdots<v_\alpha(a_j')<v_\alpha(a_1'')<\cdots<v_\alpha(a_k'')$.
\end{itemize}
Let $\varphi:S=k[X_0,X_1,\ldots,X_i,X_1',\ldots,X_j',X_1'',\ldots,X_k'']\to k[H]$ be 
the ring homomorphism defined by
$\varphi(X_s)=t^{a_s},\varphi(X_t')=t^{a_t'},\varphi(X_u'')=t^{a_u''}$
and $I_H=\ker \varphi$.

\begin{thm}\label{thm:3.claimA}
    Under the above assumption, if
    \begin{equation*}
        v_\alpha(a_i)+v_{-\alpha}(a_1'')<n-1,
    \end{equation*}
    then there exist positive integers $c_0,c_1,\ldots,c_i,c_0'',c_1'',\ldots,c_k''$ such that
    \begin{equation*}
        I_H=\mathrm{I}_2(M_+~M_0~M_-)
    \end{equation*}
    where
    \begin{equation*}
        M_+=
        \begin{pmatrix}
            X_0^{c_0} & X_1^{c_2} & \cdots &X_{i-1}^{c_{i-1}} & X_i^{c_i}\\
            X_1 & X_2 & \cdots & X_i & X_1'
        \end{pmatrix}
    \end{equation*}
    \begin{equation*}
        M_0=
        \begin{pmatrix}
            X_1'&X_2'&\cdots&X_{j-1}'\\
            X_2'&X_3'& \cdots &X_j' 
        \end{pmatrix}
    \end{equation*}
    \begin{equation*}
        M_-=
        \begin{pmatrix}
            X_j'&X_1''&X_2''&\cdots&X_{k-1}''&X_k''\\
            X_1''^{c_1''}&X_2''^{c_2''}&X_3''^{c_3''}&\cdots&X_k''^{c_k''}&X_0^{c_0''}
        \end{pmatrix}.
    \end{equation*}
\end{thm}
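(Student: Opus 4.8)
The plan is to apply Lemma \ref{prop:2.JinIH} with $J = \mathrm{I}_2(M_+~M_0~M_-)$, reducing the theorem to two separate tasks: showing $J \subseteq I_H$ and showing $\dim S/(J+(X_1)) = a_0$. The containment $J \subseteq I_H$ should follow from Lemma \ref{lem:2.I2M<IH}, provided the exponents $c_s, c_u''$ are chosen so that each $2\times 2$ block has a constant ``column degree difference'' under $\varphi$. First I would fix these exponents by working in the monoid $(\{0,1,\ldots,a_0-1\}, +)$ via the valuation $v_\alpha$: the key structural fact is that the minimal generators are linearly ordered by their $v_\alpha$-values, and that multiplication of a generator by $t^{a_0}$ raises the degree while leaving $v_\alpha$ fixed. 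I would define $c_s$ to be the smallest positive integer making $v_\alpha(a_s^{c_s}) \equiv v_\alpha(a_{s+1})$ hold after adjusting by a multiple of $a_0$, so that each column of $M_+$ (resp.\ $M_-$) represents two monomials of the same $v_\alpha$-value; since elements of $\Ap(H)$ are determined by their residue, equality of degrees in each column is forced, giving the constant-difference hypothesis. The middle block $M_0$ needs no exponents because the $a_t'$ are all in $\PFa(H)$, and consecutive pseudo-Frobenius numbers differ by the common difference $\alpha$, so $v_\alpha(a_{t+1}') = v_\alpha(a_t') + 1$ and the column-degree differences are all equal to $\alpha$.

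The substantive half is the dimension computation $\dim S/(J + (X_1)) = a_0$, equivalently showing this quotient is Artinian of the right length, or at least that its Krull dimension equals $a_0$. I would set $X_1 = 0$ and analyze the resulting binomial-type ideal combinatorially. Setting $X_1 = 0$ collapses $M_+$: the relation $X_0^{c_0} X_2 = X_1 \cdot(\cdots)$ becomes $X_0^{c_0}X_2 = 0$, and more generally the minors of $M_+$ force a cascade of monomial relations among $X_0, X_2, \ldots, X_i, X_1'$. The plan is to show that modulo $X_1$ the ideal $J$ contains enough monomials and binomials to pin down the Hilbert function so that the number of standard monomials, counted by residue class mod $a_0$ via $v_\alpha$, is exactly one per class. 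Concretely, I expect that each of the $a_0$ residues $0,1,\ldots,a_0-1$ is represented by a unique standard monomial in $S/(J+(X_1))$, and the hypothesis $v_\alpha(a_i)+v_{-\alpha}(a_1'')<n-1$ is precisely what guarantees that the three blocks $M_+, M_0, M_-$ together ``cover'' all residues without overlap or gap, making the dimension exactly $a_0$.

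The hard part will be this covering/counting argument: verifying that the chain of $v_\alpha$-values running $0 < v_\alpha(a_1) < \cdots < v_\alpha(a_i) < v_\alpha(a_1') < \cdots$ sweeps through all of $\{0,1,\ldots,a_0-1\}$ exactly once when one accounts for the ``wrap-around'' contributed by $M_-$ (which, reading in the $-\alpha$ direction via $v_{-\alpha}$, descends from the top). The inequality $v_\alpha(a_i)+v_{-\alpha}(a_1'')<n-1$ controls the overlap region between the ``positive'' chain from $M_+$ and the ``negative'' chain from $M_-$, and I would translate it into the statement that no residue class is doubly counted. I would make this precise by exhibiting, for each residue $r$, the unique standard monomial and checking it is not killed by any minor, using the identity $v_\alpha(a) + v_{-\alpha}(a) = a_0$ to pass between the two directions. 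Corollary \ref{cor:2.+eta} and Lemma \ref{lem:2.ciai,diai} should supply the stability of $\Ap(H)$ under the relevant divisibility needed to close the argument and confirm that the standard monomials genuinely lie in distinct Apéry classes.
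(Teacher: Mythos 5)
Your high-level skeleton is the same as the paper's (apply Lemma \ref{prop:2.JinIH} to $J=\mathrm{I}_2(M)$, get the containment from Lemma \ref{lem:2.I2M<IH}, then count standard monomials of an initial ideal after killing the multiplicity variable), but the proposal breaks down at the two decisive points.

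First, your definition of the exponents is wrong. You choose $c_s$ so that the two entries of a column have the \emph{same} $v_\alpha$-value, i.e.\ $c_sa_s\equiv a_{s+1}\pmod{a_0}$. With that choice each column's degree difference is some multiple $m_sa_0$ of $a_0$, and these multiples need not agree from column to column, so the hypothesis of Lemma \ref{lem:2.I2M<IH} (equal, not merely congruent, differences) fails; in particular $\mathrm{I}_2(M)\subseteq I_H$ is not obtained. In the correct matrix every column has degree difference exactly $\alpha$, so the two entries of a column have $v_\alpha$-values differing by $1$, never equal. The exponents cannot be prescribed by congruences at all: they are forced by a structure theorem for $\PFa(H)$, namely that its elements admit nested canonical expressions $p_m=a_m+\sum_{l=m}^{i}(c_l-1)a_l$ (and symmetrically on the $a''$-side via $\beta=-\alpha$), and that the generators $a_1',\ldots,a_j'$ occupy the consecutive middle terms of the arithmetic progression. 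This is the paper's Lemma \ref{lem:3.pfexpression}, and it is exactly where the hypothesis $v_\alpha(a_i)+v_{-\alpha}(a_1'')<n-1$ enters, through Lemma \ref{lem:3.a+a'notinAp} (no element of $\Ap(H)$ is a mixed sum of the $a_l$ and the $a_m''$), which yields the tripartite decomposition of $\Ap(H)$. Your proposal supplies none of this; even your claim that the $M_0$-columns have constant difference $\alpha$ silently assumes that consecutive generator pseudo-Frobenius numbers are consecutive terms of the progression, which needs proof.

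Second, the dimension count. You plan to exhibit exactly one standard monomial per residue class mod $a_0$, i.e.\ a bijection between standard monomials and $\Ap(H)$; but proving ``no residue is doubly represented'' is essentially as hard as the theorem itself. The paper instead runs a sandwich: the containment $\mathrm{I}_2(M)\subseteq I_H$ gives $\dim_k S/(\mathrm{I}_2(M)+(X_0))\geq a_0$ for free; listing monomials visibly inside the initial ideal bounds the number of standard monomials above by $\bigl(\sum_{l=1}^{i}\prod_{m=l}^{i}c_m\bigr)+j+\bigl(\sum_{l=1}^{k}\prod_{m=1}^{l}c_m''\bigr)+1$; and a separate injectivity statement for canonical expressions (Lemma \ref{lem:3.canonicalexpress}, Corollary \ref{cor:3.Apnumber}) shows this number is at most $\#\Ap(H)=a_0$. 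Equality then follows without ever constructing your bijection; your proposal has no substitute for the injectivity step. A smaller but real slip: you set $X_1=0$, taking Lemma \ref{prop:2.JinIH} literally, but in this paper's indexing the relevant variable is $X_0$ (the one mapping to $t^{a_0}$); with $X_1=0$ one has $\dim_k S/(I_H+(X_1))=a_1>a_0$, so the criterion as you use it can never be met.
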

\begin{rem}
    If $i=0$, we regard the assumption $v_\alpha(a_i)+v_{-\alpha}(a_1'')<n-1$ as $v_{-\alpha}(a_1'')<n-1$ and
    
    \begin{equation*}
        M_+=\begin{pmatrix}
           X_0^{c_0}\\
           X_1'
        \end{pmatrix}.
    \end{equation*}
    Analogously, if $k=0$, we regard the assumption $v_\alpha(a_i)+v_{-\alpha}(a_1'')<n-1$ as $v_{\alpha}(a_i)<n-1$ and
    
    \begin{equation*}
        M_-=
        \begin{pmatrix}
            X_j'\\
            X_0^{c_0''}
        \end{pmatrix}.
    \end{equation*}
    We will see in Lemma \ref{lem:3.l<r} that $j\neq0$.
\end{rem}
This Theorem \ref{thm:3.claimA} proves the main Theorem \ref{thm:1.mainthm}.
\begin{proof}[Proof of Theorem \ref{thm:1.mainthm}]
    It suffices to prove that the assumption of Theorem \ref{thm:3.claimA} is satisfied when $a_0/2+1\leq n$,
    that is, $\gcd(a_0,\alpha)=1$ and $v_\alpha(a_l)+v_{-\alpha}(a_m)<n-1$ for any $a_l,a_m$ with $v_\alpha(a_l)<\min v_\alpha(\PFa(H))$ and $v_{-\alpha}(a_m)<\min v_{-\alpha}(\PFa(H))$.
    $\gcd(a_0,\alpha)=1$ follows from Lemma \ref{lem:2.gcd}.
    We show that $v_\alpha(a_l)+v_{-\alpha}(a_m)<n-1$.
    Since $\#\PFa(H)=n-1$ and $a_0/2+1\leq n$, we have $a_0/2\leq n-1=\#\PFa(H)$.
    Then, it follows that 
    \begin{equation*}
        \#(\Ap(H)\setminus(\PFa(H)\cup\{0\}))=a_0-(n-1)-1
        < n-1.
    \end{equation*}
    Then, by the choice of $a_l$ and $a_m$, we conclude
    \begin{align*}
        v_\alpha(a_l)+v_{-\alpha}(a_m)&\leq (\min v_\alpha(\PFa(H)))-1)+(\min v_{-\alpha}(\PFa(H)))-1)\\
        &\leq \#\Ap(H)\setminus(\PFa(H)\cup\{0\})<n-1,
    \end{align*}
    as desired.
\end{proof}

We now prove Theorem \ref{thm:3.claimA}.
Before the proof, we note one remark and introduce some notation.
\begin{rem}\label{rem:3.switching}
    We may switch the role of $a_1,a_2,\ldots,a_i$ and $a_1'',a_2'',\ldots,a_k''$ by changing $\alpha$ with $-\alpha$.
    In fact, by setting  $\beta=-\alpha$ and $h'=h+n\alpha$,
    we have 
    \begin{equation*}
        \PF(H)=\{h'+\beta,h'+2\beta,\ldots,h'+(n-1)\beta\}
    \end{equation*}
    and
    \begin{equation*}
        v_\beta(a_0)=0<v_\beta(a_k'')<v_\beta(a_{k-1}'')<\cdots<v_\beta(a_1'')<v_\beta(a_j')<\cdots<v_\beta(a_1')<v_\beta(a_i)<\cdots<v_\beta(a_1).
    \end{equation*}
 \end{rem}
Let $\sigma_k:=\min \{a\in H:v_\alpha(a)=k\}(\in\Ap(H))$.
Then, we see that 
\begin{equation*}
    \Ap(H)=\{\sigma_0,\sigma_1,\ldots,\sigma_{a_0-1}\}
\end{equation*}
and there is $1\leq s\leq a_0-1$ such that
\begin{equation*}
    \PFa(H)=\{\sigma_s,\sigma_{s+1},\ldots,\sigma_{s+n-1}\}.
\end{equation*}
\begin{exam}\label{exam:runexam1}
    Let $H=\langle a_0=13, a_1=67,a_2=69,a_3=71, a_1'=144,a_2'=146,a_1''=37\rangle$.
    Then, we have 
    \begin{equation*}
        \PFa(H)=\{138,140,142,144,146,148\}
    \end{equation*}
    with $h=123$ and $\alpha=2$,
    and
    \begin{equation*}
        \Ap(H)=\left\{
        \begin{split}
\sigma_0=0,\sigma_1=67,\sigma_2=69,\sigma_3=71,\sigma_4=138,\sigma_5=140,\sigma_6=142,\\
        \sigma_7=144,\sigma_8=146,\sigma_9=148,\sigma_{10}=111,\sigma_{11}=74,\sigma_{12}=37
    \end{split}
    \right\}.
    \end{equation*}
\end{exam}
We first study the structure of the elements of  the Ap\'ery set of $H$. 
\begin{lem}\label{lem:3.a+a'notinAp}
    Let $\mathbf{0}\neq(c_1,c_2,\ldots,c_i)\in\mathbb{Z}_{\geq0}^i$ and $\mathbf{0}\neq(c_1'',c_2'',\ldots,c_k'')\in\mathbb{Z}_{\geq0}^k$.
    Then, $\sum_{l=1}^ic_la_l+\sum_{l=1}^kc_l''a_l''\not\in\Ap(H)$.
\end{lem}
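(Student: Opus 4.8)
The plan is to reduce the statement to the case of one positive generator and one negative generator, and then to derive a contradiction from the pseudo-Frobenius description of the gaps.

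First I would reduce. Since $(c_1,\dots,c_i)\neq\mathbf{0}$ and $(c_1'',\dots,c_k'')\neq\mathbf{0}$, choose indices $l,m$ with $c_l>0$ and $c_m''>0$. Then the difference $\big(\sum_{r=1}^{i}c_ra_r+\sum_{r=1}^{k}c_r''a_r''\big)-(a_l+a_m'')$ lies in $H$, so $a_l+a_m''\leq_H \sum_{r=1}^{i}c_ra_r+\sum_{r=1}^{k}c_r''a_r''$. By the downward-closedness of the Apéry set under $\leq_H$ (Lemma~\ref{lem:2.ciai,diai}) it therefore suffices to prove $a_l+a_m''\notin\Ap(H)$ for an arbitrary positive generator $a_l$ and negative generator $a_m''$. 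Write $x=v_\alpha(a_l)$ and $\bar y=v_{-\alpha}(a_m'')$. From the ordering of the generators one has $x\leq v_\alpha(a_i)$ and $\bar y\leq v_{-\alpha}(a_1'')$, so the hypothesis gives $x+\bar y\leq n-2$; note also $x\geq1$ and $\bar y\geq1$.

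Next I would argue by contradiction. Suppose $a_l+a_m''\in\Ap(H)$, i.e. $a_l+a_m''-a_0\notin H$. By Lemma~\ref{lem:2.zinH} there is a pseudo-Frobenius number whose shift $\pi\in\PFa(H)$ satisfies $\pi-a_l-a_m''\in H$. Writing $v_\alpha(\pi)=s-1+r$ with $r\in\{1,\dots,n-1\}$, set $\rho'=\pi-a_l$ and $\rho=\pi-a_m''$. Both lie in $H$ and satisfy $\rho',\rho\leq_H\pi$, so both lie in $\Ap(H)$ by Lemma~\ref{lem:2.ciai,diai}; moreover each is strictly below $\pi$ in $\leq_H$ (the differences $a_l,a_m''$ are nonzero), so by the maximality characterization of $\PFa(H)$ (Lemma~\ref{lem:2.PFexpression}) neither lies in $\PFa(H)$, and neither is $0$. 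Hence $v_\alpha(\rho'),v_\alpha(\rho)\notin\{0\}\cup\{s,s+1,\dots,s+n-2\}$.

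The computation then pins these residues down. Using the additivity rules for $v_\alpha$ one gets $v_\alpha(\rho')=(s-1+r)-x\leq s+n-3$; since this value lies in $[1,a_0-1]$ and must avoid the band $\{s,\dots,s+n-2\}$, and since it is already $\leq s+n-3$, it must be $\leq s-1$, which forces $r\leq x$. For $\rho$ the subtraction wraps around once, giving $v_\alpha(\rho)=(s-1+r)+\bar y$; using $r\leq x$ together with $x+\bar y\leq n-2$ yields $v_\alpha(\rho)=s-1+(r+\bar y)\in\{s+1,\dots,s+n-3\}$, which lies inside the forbidden band $\{s,\dots,s+n-2\}$. This contradicts $\rho\notin\PFa(H)$, completing the proof.

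I expect the main obstacle to be the $v_\alpha$-bookkeeping in the last paragraph: one must track the single wrap-around in computing $v_\alpha(\rho)$ and check that the resulting value genuinely stays in $[0,a_0-1]$, and, crucially, one must extract the inequality $r\leq x$ from the companion element $\rho'$ in order to bound $r+\bar y$ by $n-2$. Without this bound on $r$ the residue $v_\alpha(\rho)$ could wrap past the pseudo-Frobenius band and no contradiction would follow; it is precisely the hypothesis $x+\bar y<n-1$ that renders the two constraints incompatible. Finally, the degenerate cases $i=0$ or $k=0$ do not occur here since both coefficient vectors are assumed nonzero, and the case $n=3$ is vacuous because $x+\bar y\geq2=n-1$ would already violate the hypothesis.
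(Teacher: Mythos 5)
Your proof is correct and follows essentially the same route as the paper's: the same reduction via Lemma \ref{lem:2.ciai,diai} to a single sum $a_l+a_m''$, the same two auxiliary elements $\pi-a_l,\ \pi-a_m''\in\Ap(H)\setminus\PFa(H)$, and the same residue bookkeeping around the pseudo-Frobenius band, the only cosmetic difference being the endgame (the paper pushes $\pi-a_m''$ \emph{above} the band via the $\alpha\leftrightarrow-\alpha$ symmetry and contradicts the hypothesis numerically, while you use the hypothesis to land it \emph{inside} the band and contradict $\pi-a_m''\notin\PFa(H)$). The one step you leave implicit — that no wrap-around occurs in computing $v_\alpha(\pi-a_l)$, i.e.\ $x<s$ — is indeed true, since $x\le v_\alpha(a_i)<v_\alpha(a_1')\le s+n-2$ and $v_\alpha(a_i)$ avoids the band; the paper's proof relies on the same unstated fact when it asserts $v_\alpha(a_l)<v_\alpha(b)$.
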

\begin{proof}
    By Lemma \ref{lem:2.ciai,diai}, it is enough to show $a_l+a_m''\not\in\Ap(H)$ for any $1\leq l\leq i$ and $1\leq m\leq k$.
    If $a_l+a_m''\in\Ap(H)$, then there exists $\eta\in\Ap(H)$ such that $b=a_l+a_m''+\eta\in\PFa(H)$.
    Put $b^+=b-a_l$ and $b^-=b-a_m''$.
    Both elements belong to $\Ap(H)\setminus\PFa(H)$.
    Since $v_\alpha(a_l)<v_\alpha(b)$ and $b-a_l\not\in\PFa(H)$,
    we have $0<v_\alpha(b-a_l)<\min v_\alpha(\PFa(H))$.
    By Remark \ref{rem:3.switching}, we have $0<v_\beta(b-a_m'')<\min v_\beta(\PFa(H))$.
    It holds that $v_\beta(b^+)>\max v_\beta(\PFa(H))$ and $v_\beta(b^-)<\min v_\beta(\PFa(H))$.
    Then,
    \begin{align*}
        \#\PF(H)&=n-1\\
        &>v_\alpha(a_l)+v_\beta(a_m'')\\
        &=v_\alpha(a_l)+(a_0-v_\alpha(a_m''))\\
        &=a_0-v_\alpha(a_m''-a_l)\\
        &=a_0-v_\alpha(b^+-b^-)\\
        &=v_\beta(b^+-b^-)
        >\#\PF(H),
    \end{align*}
    a contradiction, where the first inequality follows from the assumption of Theorem \ref{thm:3.claimA}.
\end{proof}
Let $r=\min v_\alpha(\PFa(H))+v_\alpha(a_i)$ and $r'=\min v_\beta(\PFa(H))+v_\beta(a_1'')$.
Note that
\begin{equation*}
    \min v_\alpha(\PFa(H))<r\leq a_0-r'<\max v_\alpha(\PFa(H)).
\end{equation*}
\begin{lem}\label{lem:3.l<r}
    The following holds.
    \def\labelenumi{(\theenumi)}
    \begin{enumerate}
        \item If $0\neq\sigma_l\in\langle a_1,\ldots,a_i\rangle$, then $l<r$.
        \item If $0\neq\sigma_l\in\langle a_1'',\ldots,a_k''\rangle$, then $l>a_0-r'$.
        \item If $r\leq l\leq a_0-r'$, then $\sigma_l\in\{a_1',\ldots,a_j'\}$.
    \end{enumerate}
\end{lem}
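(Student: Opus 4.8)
The plan is to establish (1) by a direct descent argument, to obtain (2) from (1) by the $\alpha\mapsto-\alpha$ symmetry of Remark~\ref{rem:3.switching}, and then to deduce (3) from (1) and (2) together with the maximality description of $\PFa(H)$. Throughout I will use three facts freely: that every Apéry element equals $\sigma_{v_\alpha(\cdot)}$; that $t\le_H s$ with $s\in\Ap(H)$ forces $t\in\Ap(H)$ (Lemma~\ref{lem:2.ciai,diai}); and that $\PFa(H)$ is exactly the set of $\le_H$-maximal elements of $\Ap(H)$ (Lemma~\ref{lem:2.PFexpression}). I also use the already-noted inequalities $s=\min v_\alpha(\PFa(H))<r\le a_0-r'<\max v_\alpha(\PFa(H))$, and the fact that the $v_\alpha$-values of $\PFa(H)$ are $n-1$ consecutive integers starting at $s$.

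For (1), I argue by contradiction: suppose some nonzero $\sigma_l\in\langle a_1,\ldots,a_i\rangle\cap\Ap(H)$ has $l\ge r=s+v_\alpha(a_i)$. Expanding $\sigma_l$ as a sum of the generators $a_1,\ldots,a_i$ and deleting them one at a time produces a strictly $\le_H$-decreasing chain $\sigma_l=\tau_0>_H\tau_1>_H\cdots>_H\tau_M=0$ of Apéry elements, each step legitimate by Lemma~\ref{lem:2.ciai,diai}. As long as $v_\alpha(\tau_p)\ge s>v_\alpha(a_i)$, no wraparound occurs and the subtraction rule gives $v_\alpha(\tau_{p+1})=v_\alpha(\tau_p)-v_\alpha(\text{deleted generator})$, so the values $v_\alpha(\tau_p)$ descend from $l\ge s$ to $0$ in steps of size at most $v_\alpha(a_i)$. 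Since the hypothesis of Theorem~\ref{thm:3.claimA} forces $v_\alpha(a_i)<n-1$, each step is strictly smaller than the block of $n-1$ consecutive $v_\alpha$-values occupied by $\PFa(H)$; hence the descent cannot leap over that block and must take a value in $[s,\max v_\alpha(\PFa(H))]$ at some $\tau_{p^{*}}$, i.e.\ $\tau_{p^{*}}\in\PFa(H)$. If at least one generator has been deleted to reach $\tau_{p^{*}}$, then $\tau_{p^{*}}<_H\sigma_l$ with $\sigma_l\in\Ap(H)$, contradicting the $\le_H$-maximality of $\PFa(H)$; the remaining case, where $\sigma_l$ itself lies in $\PFa(H)$, is dispatched by deleting a single generator to get a proper predecessor in $\Ap(H)\setminus\PFa(H)$ of $v_\alpha$-value $<s$, whence $l<s+v_\alpha(a_i)=r$, again a contradiction.

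Statement (2) is the exact mirror of (1): Remark~\ref{rem:3.switching} replaces $\alpha$ by $-\alpha$, interchanges $a_1,\ldots,a_i$ with $a_k'',\ldots,a_1''$, and swaps the roles of $r$ and $r'$, while the hypothesis $v_\alpha(a_i)+v_{-\alpha}(a_1'')<n-1$ is symmetric; so (1) in the $(-\alpha)$-picture states that a nonzero $\sigma_l\in\langle a_1'',\ldots,a_k''\rangle\cap\Ap(H)$ has $v_{-\alpha}(\sigma_l)<r'$, and since $v_{-\alpha}(\sigma_l)=a_0-l$ this is exactly $l>a_0-r'$. For (3), take $r\le l\le a_0-r'$; then $s<l<\max v_\alpha(\PFa(H))$ puts $\sigma_l\in\PFa(H)$, so it remains to show $\sigma_l$ is a minimal generator. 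If not, write $\sigma_l=g+\rho$ with $g$ a minimal generator and $\rho\in H\setminus\{0\}$, where $g,\rho\in\Ap(H)$ by Lemma~\ref{lem:2.ciai,diai}. The value $g=a_0$ is impossible as $\sigma_l\in\Ap(H)$; if $g=a_m$ then $\rho=\sigma_l-a_m$ is a proper predecessor of $\sigma_l\in\PFa(H)$, hence lies in $\Ap(H)\setminus\PFa(H)$ with $v_\alpha(\rho)=l-v_\alpha(a_m)<s$, forcing $l<r$; the case $g=a_m''$ is excluded identically in the $(-\alpha)$-picture and yields $l>a_0-r'$. Thus $g$ is one of $a_1',\ldots,a_j'\in\PFa(H)$, but then $g<_H\sigma_l$ contradicts the maximality of $g$. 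Hence $\sigma_l$ is indecomposable, so $\sigma_l\in G\cap\PFa(H)=\{a_1',\ldots,a_j'\}$.

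The crux is the $v_\alpha$-bookkeeping in the descent of (1). The map $v_\alpha$ is only defined modulo $a_0$, so a priori each subtraction could wrap around and individual steps could be large; the argument therefore rests on two quantitative points — that no wraparound occurs while the running value stays $\ge s$ (because every $a_m$ has $v_\alpha(a_m)<s$), and that the step size $v_\alpha(a_i)$ is strictly below the length $n-1$ of the $\PFa(H)$-block, which is precisely where the hypothesis $v_\alpha(a_i)+v_{-\alpha}(a_1'')<n-1$ of Theorem~\ref{thm:3.claimA} is spent. Once the descent is trapped inside the block, maximality of $\PFa(H)$ closes the argument at once; I expect the boundary conventions for $i=0$ or $k=0$ from the Remark to require only routine adjustments, and Lemma~\ref{lem:3.a+a'notinAp}, while not strictly needed above, could be invoked to shorten the type analysis in (3).
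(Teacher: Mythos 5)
Your proof is correct, and for parts (1) and (2) it is essentially the paper's argument in different clothing: the paper proves (1) by induction on the number of summands, with the invariant that every proper partial sum is an Ap\'ery element lying strictly below the $\PFa(H)$-block (value $<\min v_\alpha(\PFa(H))$), so that adding one more generator cannot reach $r$; your contradiction-plus-descent formulation ("steps of size $\le v_\alpha(a_i)<n-1$ cannot leap over a block of $n-1$ consecutive values, and landing in the block violates $\le_H$-maximality") spends the hypothesis of Theorem~\ref{thm:3.claimA} at exactly the same place and rests on the same two facts, Lemma~\ref{lem:2.ciai,diai} and Lemma~\ref{lem:2.PFexpression}. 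Where you genuinely diverge is part (3). The paper argues via the contrapositives of (1) and (2) together with Lemma~\ref{lem:3.a+a'notinAp} (no Ap\'ery element mixes the two types of generators), concluding that $\sigma_l$ must involve some $a_t'$ and then invoking maximality; you instead first observe that $s<r\le l\le a_0-r'<\max v_\alpha(\PFa(H))$ already forces $\sigma_l\in\PFa(H)$, and then rule out any nontrivial decomposition $\sigma_l=g+\rho$ by a case analysis on the type of $g$, re-running the one-step descent of part (1) for $g=a_m$ and its mirror for $g=a_m''$. Your route buys independence from Lemma~\ref{lem:3.a+a'notinAp} (which you correctly note is not needed), and it makes explicit the useful intermediate fact $\sigma_l\in\PFa(H)$; the paper's route is more modular, reusing the mixing lemma it needs later anyway rather than repeating the descent step inside the proof of (3). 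Both arguments are sound, including your handling of the boundary case where $\sigma_l$ itself lies in $\PFa(H)$.
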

\begin{proof}
    (1) Let $\sigma_l=a_{t_1}+a_{t_2}+\cdots+a_{t_q}$.
    We prove it by induction on $q$.
    When $q=1$, we have $\sigma_l=a_{t_1}$.
    Thus, we obtain $l=v_\alpha(a_{t_1})<\min v_\alpha(\PFa(H))<r$.
    Next, assume $q>1$, and let $\tau=a_{t_1}+a_{t_2}+\cdots+a_{t_{q-1}}$.
    By Lemma \ref{lem:2.ciai,diai}, we have $\tau\in\Ap(H)$.
    By the induction hypothesis, we have $v_\alpha(\tau)<r$,
    and since $\tau\not\in\PFa(H)$ it follows that $v_\alpha(\tau)<\min v_\alpha(\PFa(H))$.
    Hence, we obtain $l=v_\alpha(\sigma_l)\leq v_\alpha(\tau)+v_\alpha(a_i)<r$.

    (2) follows from (1) and Remark \ref{rem:3.switching}.

    (3) 
    By considering the contrapositive of (1) and (2),
    if $r\leq l \leq a_0-r'$, then by Lemma \ref{lem:3.a+a'notinAp} we have $\sigma_l\not\in\langle a_1,\ldots,a_i, a_1'',\ldots,a_k''\rangle$.
    Thus, it follows that $\sigma_l\in\langle a_1',\ldots,a_j'\rangle$.
    Since $a_1',\ldots,a_j'\in\PFa(H)$, we obtain $\sigma_l\in\{a_1',\ldots,a_j'\}$.
\end{proof}
\begin{lem}\label{prop:3.X-o,Y-o}
    Suppose that $X=\sum_{l=1}^ic_la_l\in\PFa(H),~v_\alpha(X)<r$ and $\omega=\sum_{l=1}^id_la_l\in\Ap(H)$ with $d_l\leq c_l$ for all $l$. 
    If $Y=X-\alpha\in\PFa(H)$, then $Y-\omega\in\Ap(H)$.
\end{lem}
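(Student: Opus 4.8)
The plan is to reduce the assertion to a single membership question and then settle it by contradiction via the pseudo-Frobenius characterization. First observe that $Y\in\PFa(H)\subseteq\Ap(H)$ and that $Y-(Y-\omega)=\omega=\sum_{l=1}^{i}d_la_l\in H$, so $Y-\omega\leq_H Y$. Hence by Lemma \ref{lem:2.ciai,diai}, once we know $Y-\omega\in H$ we automatically obtain $Y-\omega\in\Ap(H)$. Thus everything reduces to proving the membership $Y-\omega\in H$; the case $\omega=0$ is immediate, so we may assume $\omega\neq0$.

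For the membership, note first that $X-\omega=\sum_{l=1}^{i}(c_l-d_l)a_l\in H$, since $d_l\leq c_l$ for all $l$. Suppose, for contradiction, that $Y-\omega\notin H$. By Lemma \ref{lem:2.zinH} there is $p\in\PF(H)$ with $p-(Y-\omega)\in H$. Adding this to $X-\omega\in H$ and using $X-Y=\alpha$, we get
\begin{equation*}
    \bigl(p-(Y-\omega)\bigr)+(X-\omega)=p+(X-Y)=p+\alpha\in H.
\end{equation*}
Writing $p=h+m\alpha$ with $1\leq m\leq n-1$, this says $h+(m+1)\alpha\in H$. If $m\leq n-2$, then $h+(m+1)\alpha\in\PF(H)\subseteq\mathbb{Z}\setminus H$, a contradiction. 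Therefore the only surviving possibility is $m=n-1$, i.e.\ $p=h+(n-1)\alpha$ is the largest pseudo-Frobenius number.

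Excluding this last case is the crux of the proof, and I expect it to be the main obstacle. Setting $q=p+a_0$, the surviving case is exactly that $q$ is the $\leq_H$-maximal element of $\PFa(H)$; here $p+\alpha=h+n\alpha$ may genuinely lie in $H$ (this already occurs in Example \ref{exam:runexam1}), so the crude contradiction above is unavailable. To rule it out I would exploit the hypotheses not yet used: $v_\alpha(X)<r=\min v_\alpha(\PFa(H))+v_\alpha(a_i)$ together with the standing assumption $v_\alpha(a_i)+v_{-\alpha}(a_1'')<n-1$ of Theorem \ref{thm:3.claimA}. From $p-(Y-\omega)\in H$ one gets $(q-Y)+\omega\in a_0+H$, while $q$ and $Y$ are two distinct $\leq_H$-maximal elements of $\Ap(H)$, so Lemma \ref{lem:2.PFexpression} forces $q-Y\notin H$. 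Since $v_\alpha(Y)=v_\alpha(X)-1<\min v_\alpha(\PFa(H))+v_\alpha(a_i)-1$ whereas $v_\alpha(q)=\max v_\alpha(\PFa(H))$ and $\max v_\alpha(\PFa(H))-\min v_\alpha(\PFa(H))=n-2$, the gap $v_\alpha(q)-v_\alpha(Y)$ exceeds $n-1-v_\alpha(a_i)$; feeding this, together with the $v_{-\alpha}$-contribution of $\omega$, into a counting argument of the same type as in Lemma \ref{lem:3.a+a'notinAp} should produce a run of distinct Ap\'ery elements of length greater than $\#\PF(H)=n-1$, which is impossible. This counting step—balancing the maximal $v_\alpha$-value of $q$ against the length of the pseudo-Frobenius progression—is where the real difficulty lies, and it is precisely where the inequality $v_\alpha(a_i)+v_{-\alpha}(a_1'')<n-1$ must enter.
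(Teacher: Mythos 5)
Your first two steps are sound, and they are in fact a cleaner packaging of the opening of the paper's own proof: Lemma \ref{lem:2.ciai,diai} reduces everything to the single membership $Y-\omega\in H$ (the paper does the same by hand, showing $Y-\omega\in H\setminus\Ap(H)$ would contradict $Y\in\Ap(H)$), and Lemma \ref{lem:2.zinH} together with the identity $\bigl(p-(Y-\omega)\bigr)+(X-\omega)=p+\alpha$ correctly pins any dominating pseudo-Frobenius number down to $p=h+(n-1)\alpha$. But your proof stops exactly where the lemma begins. The exclusion of that last case is never carried out, only conjectured (``should produce a run of distinct Ap\'ery elements\dots''), and that case is not a loose end: it is the entire content of the statement, since it is the only place where the hypothesis $v_\alpha(X)<r$ and the standing assumption of Theorem \ref{thm:3.claimA} can act --- up to that point you have used nothing beyond $X-\omega\in H$. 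As you observe yourself, $p+\alpha=h+n\alpha$ may genuinely lie in $H$, so no soft argument can finish it. This is a genuine gap, not a routine verification left to the reader.

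Moreover, the mechanism you sketch is not the one that closes the case. In the bad case you possess one membership $z:=p-(Y-\omega)\in H$, whereas the counting contradiction of Lemma \ref{lem:3.a+a'notinAp} needs two elements of $\Ap(H)$ lying on opposite sides of $\PFa(H)$ in the $v_\alpha$-order; and $\omega\in\langle a_1,\ldots,a_i\rangle$ has no useful ``$v_{-\alpha}$-contribution'' (its $v_{-\alpha}$-value is just $a_0-v_\alpha(\omega)$; the generators $a_u''$ never enter this lemma). What actually kills the case is a growth property of the Ap\'ery set: $\sigma_t+\alpha\leq\sigma_{t+1}$ for every index $t$ below $\max v_\alpha(\PFa(H))$ --- for $t<r$ this is precisely the lemma the paper proves after the $\Ap_\pm$ decomposition using Corollary \ref{cor:2.+eta} and Lemmas \ref{lem:3.a+a'notinAp}, \ref{lem:3.l<r} (its proof does not use the present lemma, so you could invoke it without circularity), and on the range of $\PFa(H)$ it is the arithmetic-progression property. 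Granting it, your case does close: since $p-\alpha\in\PF(H)$, your own step-2 computation shows $z-\alpha\notin H$, hence $\sigma_v\leq z$ and $z-\alpha\leq\sigma_{v-1}-a_0$ with $v=v_\alpha(z)$, so $\sigma_v<\sigma_{v-1}+\alpha$, contradicting the growth property. The paper instead bypasses your dichotomy entirely: it lifts $Y-\omega$ into $\Ap(H)$ as $Y-\omega+ta_0$, shows the lift lies in $\langle a_1,\ldots,a_i\rangle$ (Lemmas \ref{lem:3.a+a'notinAp} and \ref{lem:3.l<r}), pushes it into $\PFa(H)$ by adding some $\eta\in\Ap(H)$ (Corollary \ref{cor:2.+eta}), and uses $v_\alpha<r\leq\max v_\alpha(\PFa(H))$ to add $\alpha$ and stay inside $\PFa(H)$, which forces $t=0$. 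Either way, the structural work you deferred is the proof itself, and it is missing from your proposal.
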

\begin{proof}
    Since $\omega,X-\omega\in\langle a_1,\ldots,a_i\rangle$, 
    it follows that $v_\alpha(\omega),v_\alpha(X-\omega)<r$.
    Thus, we have $v_\alpha(Y-\omega)<v_\alpha(X-\omega)<r$.
    Suppose $Y-\omega\in H\setminus\Ap(H)$.
    Then, there exists $b\in H$ such that $v_\alpha(b)=v_\alpha(Y-\omega)$ and $b<Y-\omega$.
    It follows that $b+\omega<Y$ and $b+\omega\in H$.
    Since $v_\alpha(b+\omega)=v_\alpha(Y)$,
    this contradicts $Y\in\Ap(H)$.
    Hence $Y-\omega\in\Ap(H)$ or $Y-\omega\not\in H$.
    Therefore, there is an integer $t\geq0$ such that $Y-\omega+ta_0\in\Ap(H)$.
    Since $X-\omega\not\in\PFa(H)$, we have $Y-\omega +ta_0\not\in \PFa(H)$, and it follows that $Y-\omega+ta_0\not\in\{ a_1',\ldots,a_j'\}$.
    Also, since $v_\alpha(Y-\omega+ta_0)<v_\alpha(X-\omega)<r$, 
    we have $Y-\omega+ta_0\not\in\langle a_1'',\ldots,a_k''\rangle$ by Lemma \ref{lem:3.l<r} (2).
    Thus, $Y-\omega+ta_0\in\langle a_1,\ldots,a_i\rangle$.
    By Corollary \ref{cor:2.+eta}, there exists $\eta\in\Ap(H)$ such that $Y-\omega +ta_0+\eta\in\PFa(H)$.
    By Lemma \ref{lem:3.a+a'notinAp}, it follows that $Y-\omega+ta_0+\eta\in\langle a_1,\ldots,a_i\rangle$.
    Therefore by Lemma \ref{lem:3.l<r} we have $v_\alpha(Y-\omega+ta_0+\eta)<r\leq \max v_\alpha(\PFa(H))$,
    and it follows that $X-\omega+ta_0+\eta=(Y-\omega+ta_0+\eta)+\alpha\in\PFa(H)$.
    Since $X-\omega,\eta\in\Ap(H)$, we conclude $t=0$.
\end{proof}
\begin{cor}\label{cor:3.YnotG}
    Suppose that $X\in\langle a_1,\ldots,a_i\rangle\cap\PFa(H)$ and $Y=X-\alpha\in\PFa(H)$.
    Then, $Y\not\in G$.
\end{cor}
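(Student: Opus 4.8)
The plan is to argue by contradiction. Suppose $Y\in G$. Since $Y=X-\alpha\in\PFa(H)$ by hypothesis, $Y$ lies in $G\cap\PFa(H)=\{a_1',\ldots,a_j'\}$; in particular $Y$ belongs to the minimal system of generators of $H$, so it is irreducible, meaning it cannot be written as a sum $b+c$ with $b,c\in H\setminus\{0\}$. My goal is therefore to manufacture exactly such a decomposition of $Y$.

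First I would set up the data needed for Lemma \ref{prop:3.X-o,Y-o}. Because $X\in\PFa(H)$ we have $X\neq0$, so writing $X=\sum_{l=1}^i c_la_l$ there is an index $l_0$ with $c_{l_0}\geq1$ (note that the claim is vacuous when $i=0$, since then $\langle a_1,\ldots,a_i\rangle=\{0\}$ meets $\PFa(H)$ trivially). Moreover $X\in\langle a_1,\ldots,a_i\rangle$ and $X\in\Ap(H)$ (as $\PFa(H)\subseteq\Ap(H)$), so $X=\sigma_{v_\alpha(X)}$ and Lemma \ref{lem:3.l<r}(1) yields $v_\alpha(X)<r$. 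Together with $Y=X-\alpha\in\PFa(H)$, all the hypotheses of Lemma \ref{prop:3.X-o,Y-o} are in place.

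The key step is then to invoke Lemma \ref{prop:3.X-o,Y-o} with the single generator $\omega=a_{l_0}$, that is $d_{l_0}=1$ and $d_l=0$ for $l\neq l_0$; this is admissible because $a_{l_0}\in\Ap(H)$ and $d_l\leq c_l$ for every $l$. The lemma then gives $Y-a_{l_0}\in\Ap(H)\subseteq H$. Since $v_\alpha(a_{l_0})<\min v_\alpha(\PFa(H))\leq v_\alpha(Y)$, we have $Y\neq a_{l_0}$, whence $Y-a_{l_0}\in H\setminus\{0\}$. Consequently $Y=(Y-a_{l_0})+a_{l_0}$ exhibits $Y$ as a sum of two nonzero elements of $H$, contradicting its irreducibility. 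This contradiction forces $Y\notin G$.

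I do not anticipate a genuine obstacle: once Lemma \ref{prop:3.X-o,Y-o} is available the corollary is essentially a one-line consequence. The only points requiring care are bookkeeping ones, namely checking that $\omega=a_{l_0}$ is an admissible input to that lemma (it lies in $\Ap(H)$ and is dominated coefficientwise by $X$), and that the $v_\alpha$-ordering separates $Y$ from $a_{l_0}$ so that the resulting decomposition of $Y$ is genuinely nontrivial.
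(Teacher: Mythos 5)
Your proof is correct and takes essentially the same route as the paper: both arguments apply Lemma \ref{prop:3.X-o,Y-o} to a nonzero summand $\omega$ of $X$ to conclude $Y-\omega\in\Ap(H)$, and then read $Y=(Y-\omega)+\omega$ as a nontrivial decomposition in $H$, contradicting minimality of $Y$ as a generator. The only difference is cosmetic: you specialize $\omega$ to a single generator $a_{l_0}$ and verify $Y\neq a_{l_0}$ explicitly, whereas the paper takes an arbitrary nonzero $\omega$ coefficientwise dominated by $X$ and leaves that nondegeneracy implicit.
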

\begin{proof}
    By the assumption, there is $(c_1,\ldots,c_i)\in\mathbb{Z}_{\geq0}^i$ such that $(c_1,\ldots,c_i)\neq\mathbf{e}_l(1\leq l\leq i)$ and $X=\sum_{l=1}^ic_la_l$.
    We can take $\textbf{0}\neq(d_1,\ldots,d_i)\in\mathbb{Z}_{\geq0}$ satisfying $(d_1,\ldots,d_i)\leq(c_1,\ldots,c_i)$.
    Let $\omega=\sum_{l=1}^id_la_l$.
    Then, $X-\omega\in\Ap(H)$.
    By Lemma \ref{prop:3.X-o,Y-o}, we obtain $Y-\omega\in \Ap(H)$.
    Since $Y=(Y-\omega)+\omega$, $Y$ can always be expressed as a sum of at least two generators.
\end{proof}
By Remark \ref{rem:3.switching}, we can prove the following statements in the same way as in Lemma \ref{prop:3.X-o,Y-o} and Corollary \ref{cor:3.YnotG}.
\begin{lem}
    Suppose that $X=\sum_{l=1}^kc_l''a_l''\in\PFa(H) ,~v_\beta(X)<r'$ and
    $\omega=\sum_{l=1}^kd_l''a_l''\in\Ap(H)$ with $d_l''\leq c_l''$ for all $l$.
    If $Y=X-\beta\in\PFa(H)$, then $Y-\omega\in\Ap(H)$.
\end{lem}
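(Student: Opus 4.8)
The plan is to deduce this lemma from Lemma~\ref{prop:3.X-o,Y-o} via the symmetry recorded in Remark~\ref{rem:3.switching}, rather than repeating the argument. Recall that replacing $\alpha$ by $\beta=-\alpha$ (and $h$ by $h'=h+n\alpha$) leaves the intrinsic data $H$, $\Ap(H)$ and $\PFa(H)$ unchanged while reversing the order in which the generators appear by their valuation. In particular the block $a_1'',\ldots,a_k''$ occupies, for $v_\beta$, exactly the position that $a_1,\ldots,a_i$ occupied for $v_\alpha$: one has $0<v_\beta(a_k'')<\cdots<v_\beta(a_1'')<\min v_\beta(\PFa(H))$, so these are precisely the generators with positive $v_\beta$-value below the first pseudo-Frobenius level, with $a_1''$ playing the role of $a_i$. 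Consequently $r'=\min v_\beta(\PFa(H))+v_\beta(a_1'')$ is exactly the $\beta$-analogue of $r=\min v_\alpha(\PFa(H))+v_\alpha(a_i)$.

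Concretely, I would introduce the relabeling $b_0=a_0$ and $b_l=a_{k+1-l}''$ for $1\le l\le k$, so that $0<v_\beta(b_1)<\cdots<v_\beta(b_k)=v_\beta(a_1'')$, and check that after this substitution every hypothesis of Lemma~\ref{prop:3.X-o,Y-o} is met in the $\beta$-picture. A sum $\sum_{l=1}^k c_l''a_l''$ is the same element as $\sum_{l=1}^k c_l b_l$ once the coefficients are reindexed, so $X$ and $\omega$ acquire the required form, the constraint $d_l''\le c_l''$ becomes $d_l\le c_l$, the hypothesis $v_\beta(X)<r'$ is the analogue of $v_\alpha(X)<r$, and $Y=X-\beta$ corresponds to $X-\alpha$. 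The internal ingredients of the proof of Lemma~\ref{prop:3.X-o,Y-o}—namely Lemma~\ref{lem:3.a+a'notinAp}, Lemma~\ref{lem:3.l<r} (with part (1) now taking the place of part (2)), and Corollary~\ref{cor:2.+eta}—are all symmetric under $\alpha\leftrightarrow\beta$. In particular the standing hypothesis $v_\alpha(a_i)+v_{-\alpha}(a_1'')<n-1$ of Theorem~\ref{thm:3.claimA} is preserved, since $v_{-\beta}(a_i)+v_\beta(a_1'')=v_\alpha(a_i)+v_{-\alpha}(a_1'')$. Applying Lemma~\ref{prop:3.X-o,Y-o} verbatim in the $\beta$-ordering then gives $Y-\omega\in\Ap(H)$, as claimed.

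The only point needing care—and the step I expect to be the main, if minor, obstacle—is confirming that the index reversal preserves the inequality chain $\min v_\beta(\PFa(H))<r'\le a_0-r<\max v_\beta(\PFa(H))$ that underlies the appeal to Lemma~\ref{lem:3.l<r} in the $\beta$-picture. This is the exact $\alpha\leftrightarrow\beta$ mirror of the inequality $\min v_\alpha(\PFa(H))<r\le a_0-r'<\max v_\alpha(\PFa(H))$ noted after the definition of $r$ and $r'$, and it follows from the symmetric definitions together with $v_\alpha(a)+v_\beta(a)=a_0$ for $a\notin a_0\mathbb{Z}$; I would state it explicitly so the reduction is airtight. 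As an equivalent alternative, one could transcribe the proof of Lemma~\ref{prop:3.X-o,Y-o} word for word, replacing $\alpha,i,r,v_\alpha$ throughout by $\beta,k,r',v_\beta$; the two routes coincide, but the symmetry reduction via Remark~\ref{rem:3.switching} is cleaner and avoids duplicating the entire case analysis.
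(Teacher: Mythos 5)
Your proposal is correct and matches the paper's own proof, which consists precisely of invoking the $\alpha\leftrightarrow\beta$ symmetry of Remark~\ref{rem:3.switching} to transfer Lemma~\ref{prop:3.X-o,Y-o} verbatim to the block $a_1'',\ldots,a_k''$; the paper simply states this reduction without writing out the reindexing details you supply.
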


\begin{cor}\label{cor:3YnotG2}
    Suppose that $X\in\langle a_1'',\ldots,a_k''\rangle\cap\PFa(H)$ and $Y=X-\beta\in\PFa(H)$.
    Then, $Y\not\in G$.
\end{cor}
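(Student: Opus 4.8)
The plan is to deduce Corollary \ref{cor:3YnotG2} from the preceding Lemma by exactly the symmetry argument invoked in Remark \ref{rem:3.switching}, mirroring how Corollary \ref{cor:3.YnotG} is derived from Lemma \ref{prop:3.X-o,Y-o}. Concretely, I would first record that $X\in\langle a_1'',\ldots,a_k''\rangle\cap\PFa(H)$ means there is $(c_1'',\ldots,c_k'')\in\mathbb{Z}_{\geq0}^k$ with $X=\sum_{l=1}^k c_l''a_l''$, and that since $Y=X-\beta\in\PFa(H)$ is also a pseudo-Frobenius element, the tuple cannot be a single standard basis vector $\mathbf{e}_l$ (otherwise $Y$ would have to coincide with a generator shifted by $\beta$, forcing a contradiction with the ordering $v_\beta(a_k'')<\cdots<v_\beta(a_1'')$). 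This guarantees there is room to peel off a nonzero sub-sum.

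Next I would choose $\mathbf{0}\neq(d_1'',\ldots,d_k'')\in\mathbb{Z}_{\geq0}^k$ with $(d_1'',\ldots,d_k'')\leq(c_1'',\ldots,c_k'')$ componentwise, and set $\omega=\sum_{l=1}^k d_l''a_l''$. Because $\omega\leq_H X$ and $X\in\Ap(H)$, Lemma \ref{lem:2.ciai,diai} yields $\omega\in\Ap(H)$, so the hypotheses of the preceding Lemma are met (one also needs $v_\beta(X)<r'$, which holds since $X\in\langle a_1'',\ldots,a_k''\rangle$ by the $\beta$-analogue of Lemma \ref{lem:3.l<r}(1), i.e. Lemma \ref{lem:3.l<r}(2) read through Remark \ref{rem:3.switching}). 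Applying that Lemma gives $Y-\omega\in\Ap(H)$, and in particular $Y-\omega\in H$. Then the decomposition $Y=(Y-\omega)+\omega$ exhibits $Y$ as a sum of at least two nonzero elements of $H$ (since both $\omega$ and $Y-\omega$ are nonzero semigroup elements), so $Y$ is not a minimal generator, i.e. $Y\notin G$.

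The only subtlety, and the step I would watch most carefully, is verifying that the tuple for $X$ genuinely admits a nonzero proper sub-tuple $\omega$ that is \emph{strictly} less than $X$ in the needed sense — that is, that $Y-\omega$ remains a nonzero element of $H$ rather than collapsing to $0$. In the $\alpha$-side Corollary this is handled by noting $(c_1,\ldots,c_i)\neq\mathbf{e}_l$ and choosing $\mathbf{0}\neq(d_1,\ldots,d_i)\leq(c_1,\ldots,c_i)$; the same bookkeeping must be transcribed with $c_l''$, $d_l''$, $\beta$ in place of $c_l$, $d_l$, $\alpha$. Since Remark \ref{rem:3.switching} sets up a literal order-reversing dictionary between the two families of generators and between $\alpha$ and $\beta$, and since all the auxiliary lemmas (Lemma \ref{lem:2.ciai,diai}, Lemma \ref{lem:3.a+a'notinAp}, Corollary \ref{cor:2.+eta}) are themselves symmetric under this switch, no genuinely new obstacle arises; the argument is a faithful symmetric copy of the proof of Corollary \ref{cor:3.YnotG}, and I would simply cite the $\beta$-version of the Lemma and reproduce the final two-term decomposition.
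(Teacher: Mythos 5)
Your proposal is correct and is exactly the paper's proof: the paper itself derives this corollary via the symmetry of Remark \ref{rem:3.switching}, repeating the argument of Corollary \ref{cor:3.YnotG} with the $\beta$-version of Lemma \ref{prop:3.X-o,Y-o}, precisely as you describe. One small repair: the tuple for $X$ cannot be a standard basis vector simply because $X\in\PFa(H)$ while $G\cap\PFa(H)=\{a_1',\ldots,a_j'\}$, so $X\neq a_l''$ for every $l$; your parenthetical justification via $Y$ and the ordering of the $v_\beta(a_l'')$ is unnecessary and does not quite work as stated.
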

By Corollaries \ref{cor:3.YnotG} and \ref{cor:3YnotG2}, there are positive integers $g<h$ satisfying
\begin{equation}\label{eq:3.Ap}
   \Ap(H)=\{0,\sigma_1,\ldots,\sigma_g,\sigma_{g+1},\ldots,\sigma_{h-1},\sigma_{h},\ldots\sigma_{a_0-1}\} 
\end{equation}
with $\sigma_1,\ldots,\sigma_{g}\in\langle a_1,\ldots,a_i\rangle,\sigma_{g+1}=a_1',\sigma_{g+2}=a_2',\ldots,\sigma_{h-1}=a_j',$
and $\sigma_{h},\ldots,\sigma_{a_0-1}\in\langle a_1'',\ldots,a_k''\rangle$.
\begin{exam}
    Let $H$ be as in Example \ref{exam:runexam1}.
    Then, we have $\PFa(H)=\{138,140,142,144,146,148\}$ with $h=123$ and $\alpha=2$.
    Thus, the Ap\'ery set of $H$ is 
    \begin{equation*}
        \Ap(H)=\{0,67,69,71,138,140,142,144,146,148,111,74,37\}
    \end{equation*}
    with $67,69,71,138,140,142\in\langle67,69,71\rangle$ and
    $148,111,74,37\in\langle37\rangle$.
\end{exam}

Our next goal is to determine the elements in $\PFa(H)$.
We split $\Ap(H)$ as follows.
\begin{align*}
    \Ap_+(H)&:=\{a\in \Ap(H):0<v_{\alpha}(a)<v_{\alpha}(a_1')\};\\
    \Ap_0(H)&:=\{a\in \Ap(H):v_{\alpha}(a_1')\leq v_{\alpha}(a)\leq v_{\alpha}(a_j')\};\\
    \Ap_-(H)&:=\{a\in \Ap(H):v_{\alpha}(a)>v_{\alpha}(a_j')\}.
\end{align*}
We have $\Ap(H)=\Ap_+(H)\cup \Ap_0(H) \cup \Ap_-(H)$, and $a_s\in \Ap_+(H),a_t'\in \Ap_0(H),a_u''\in \Ap_-(H)$.
\begin{exam}
     Let $H$ be as in Example \ref{exam:runexam1}.
    Then, 
    \begin{align*}
    \Ap_+(H)&=\{67,69,71,138,140,142\};\\
    \Ap_0(H)&=\{144,146\};\\
    \Ap_-(H)&=\{148,111,74,37\}.
    \end{align*}
\end{exam}
\begin{lem}
    Let $t$ be a positive integer.
    \def\labelenumi{(\theenumi)}
    \begin{enumerate}
        \item If $t<r$, then $\sigma_t+\alpha\leq\sigma_{t+1}$.
        \item If $t<a_0-r'$, then $\sigma_t+\beta\leq\sigma_{t-1}$.
    \end{enumerate}
\end{lem}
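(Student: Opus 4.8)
The plan is to recast part~(1) as the membership statement
\begin{equation*}
\sigma_{t+1}-\alpha\in H\qquad(1\le t<r).
\end{equation*}
This is equivalent to the asserted inequality: as $t+1\le r<a_0$ we have $v_\alpha(\sigma_{t+1}-\alpha)=t=v_\alpha(\sigma_t)$, so $\sigma_{t+1}-\alpha$ lies in the residue class of $\sigma_t$ modulo $a_0$, of which $\sigma_t$ is the smallest element of $H$; hence $\sigma_{t+1}-\alpha\in H$ gives $\sigma_t\le\sigma_{t+1}-\alpha$, while $\sigma_{t+1}-\alpha\notin H$ gives $\sigma_{t+1}-\alpha<\sigma_t$, i.e. $\sigma_{t+1}<\sigma_t+\alpha$. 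I would run a minimal-counterexample argument: let $t_0\le r$ be least with $\sigma_{t_0}-\alpha\notin H$, so that $\sigma_u-\alpha\in H$ for all $1\le u<t_0$.

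Write $p=\min v_\alpha(\PFa(H))$. Since $\PF(H)$ is the arithmetic progression $h+\alpha,\dots,h+(n-1)\alpha$, its shift $\PFa(H)=\PF(H)+a_0$ is arithmetic with common difference $\alpha$ and its elements realize the consecutive values $v_\alpha=p,\dots,p+n-2$; thus $\PFa(H)=\{\sigma_p,\dots,\sigma_{p+n-2}\}$ and $\sigma_q-\sigma_{q-1}=\alpha$ for $p<q\le p+n-2$. In particular $\sigma_q-\alpha=\sigma_{q-1}\in H$ there, and as $r\le a_0-r'<p+n-2$ no index in $(p,r]$ can fail, so $t_0\le p$. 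I then show $\sigma_{t_0}$ is a minimal generator: otherwise $\sigma_{t_0}=\omega+a_m$ with $a_m$ a generator and $\omega=\sigma_{t_0}-a_m\in\Ap(H)$ (Lemma~\ref{lem:2.ciai,diai}) nonzero, whence $1\le v_\alpha(a_m)<t_0$ and minimality gives $a_m-\alpha=\sigma_{v_\alpha(a_m)}-\alpha\in H$, so $\sigma_{t_0}-\alpha=\omega+(a_m-\alpha)\in H$, a contradiction. Using \eqref{eq:3.Ap} one has $g\le p+n-2$, so every $+$-generator has $v_\alpha<p$; combined with $t_0\le p$, the generator $\sigma_{t_0}$ must be a $+$-generator $a_m$, or (only when $g=p-1$) the least pseudo-Frobenius generator $\sigma_p$.

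For a $+$-generator $a_m=\sigma_{t_0}$ I would use Corollary~\ref{cor:2.+eta} to choose $\eta\in\Ap(H)$ with $P:=a_m+\eta\in\PFa(H)$. Being a proper sum, $P$ is not a minimal generator; by Lemma~\ref{lem:3.a+a'notinAp} its representation involves no $-$-generator, and by the maximality of $\PFa(H)$ for $\le_H$ (Lemma~\ref{lem:2.PFexpression}) no pseudo-Frobenius generator; hence $P\in\langle a_1,\dots,a_i\rangle$ with $v_\alpha(P)\le g<r$, and likewise $\eta=P-a_m\in\langle a_1,\dots,a_i\rangle$. If $v_\alpha(P)>p$ then $P-\alpha\in\PFa(H)$, and Lemma~\ref{prop:3.X-o,Y-o} applied with $X=P$ and $\omega=\eta$ yields $a_m-\alpha=(P-\alpha)-\eta\in\Ap(H)\subseteq H$, contradicting the choice of $t_0$. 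The main obstacle is the boundary case $v_\alpha(P)=p$, where $a_m$ reaches only the least element $\sigma_p$ of $\PFa(H)$ (this also subsumes the generator $\sigma_p$, where $\eta=0$); here $P-\alpha\notin\PFa(H)$ and Lemma~\ref{prop:3.X-o,Y-o} is unavailable. I expect to handle it by showing directly that $\xi-\alpha\in H$ for the offending generator $\xi\in\{a_m,\sigma_p\}$: by Lemma~\ref{lem:2.zinH} a failure $\xi-\alpha\notin H$ is witnessed by some $\gamma\in\PF(H)$ with $\gamma-(\xi-\alpha)\in H$, and then $\gamma+\alpha=\xi+(\gamma+\alpha-\xi)\in H$ forces $\gamma+\alpha\notin\PF(H)$, so $\gamma=h+(n-1)\alpha$ and $\xi\le_H h+n\alpha$. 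One then has to contradict this using $t_0\le p<r$ together with \eqref{eq:3.Ap} and Lemma~\ref{lem:3.a+a'notinAp}, the point being that $h+n\alpha$ has $v_\alpha=p+n-1$ and so reduces into the $\langle a_1'',\dots,a_k''\rangle$-part of the Apéry set, which cannot dominate a purely $+$-element; keeping track of the stray multiples of $a_0$ in this comparison is the delicate step.

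Finally, part~(2) is the mirror of part~(1) under the substitution $\alpha\leftrightarrow\beta$ of Remark~\ref{rem:3.switching}, which interchanges $a_1,\dots,a_i$ with $a_1'',\dots,a_k''$ and $r$ with $r'$; replacing the index $t$ by $a_0-t$ converts the statement established for $\alpha$ into the one for $\beta$, so (2) follows from (1).
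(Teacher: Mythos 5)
Your descent argument is sound up to, but not including, the boundary case, and that case is a genuine, unclosed gap rather than a corner that can be waved away. Concretely: the reduction to $\sigma_{t+1}-\alpha\in H$, the bound $t_0\le p$, the claim that $\sigma_{t_0}$ must be a minimal generator, and the case $v_\alpha(P)>p$ via Lemma \ref{prop:3.X-o,Y-o} all work (modulo writing out the purity arguments you gesture at). But the case $v_\alpha(P)=p$, which you yourself flag as ``the main obstacle,'' is exactly where the lemma's content lies, and your sketched route does not close it. From $\xi-\alpha\notin H$ you correctly get $\gamma=h+(n-1)\alpha$ and $\xi\leq_H h+n\alpha$; the trouble is that $h+n\alpha$ need not lie in $\Ap(H)$: writing $h+n\alpha=\sigma_{p+n-1}+ma_0$ with $m\ge 0$, only the case $m=0$ can be contradicted by purity (Lemmas \ref{lem:3.a+a'notinAp} and \ref{lem:3.l<r}), whereas for $m>0$ the relation $\sigma_{p+n-1}+ma_0-\xi\in H$ carries no Ap\'ery-set information at all. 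This is not hypothetical: in the paper's own Example \ref{exam:runexam1} one has $h+n\alpha=137=\sigma_{10}+2a_0$, so $m=2$. Ruling out $m>0$ would require an upper bound on $\alpha$ that you never establish, and none is automatic ($\alpha>a_0$ genuinely occurs under the running hypotheses, e.g.\ $H=\langle 9,19,48,106,116\rangle$ with $\alpha=10$, $k=0$). So the proposal is incomplete precisely at its hardest point.

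The obstruction is built into your choice of direction: subtracting $\alpha$ must eventually collide with the bottom element $\sigma_p$ of $\PFa(H)$, where Lemma \ref{prop:3.X-o,Y-o} is unavailable. The paper's proof goes upward instead and meets no boundary, and it needs no minimal-counterexample scaffolding. If $\sigma_t\in\PFa(H)$, then $t<r<\max v_\alpha(\PFa(H))$ forces $\sigma_{t+1}\in\PFa(H)$ and the inequality is immediate. Otherwise take $\eta\in\Ap(H)$ with $\sigma_t+\eta\in\PFa(H)$ (Corollary \ref{cor:2.+eta}); purity gives $\sigma_t+\eta\in\langle a_1,\ldots,a_i\rangle$, hence $v_\alpha(\sigma_t+\eta)<r<\max v_\alpha(\PFa(H))$, so $\sigma_t+\eta+\alpha$ is \emph{still} in $\PFa(H)\subseteq\Ap(H)$; since Ap\'ery elements are minimal in their residue classes and $\sigma_{t+1}+\eta\in H$ lies in the same class, $\sigma_t+\eta+\alpha\leq\sigma_{t+1}+\eta$, and cancelling $\eta$ finishes. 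The hypothesis $t<r$ guarantees room at the \emph{top} of $\PFa(H)$; nothing guarantees room at the bottom, which is exactly where your descent breaks---indeed, applying the paper's upward step at index $t_0-1$ is what resolves your boundary case. One further point: the switch of Remark \ref{rem:3.switching} sends the index $t$ to $a_0-t$, so what (1) actually yields is ``if $t>a_0-r'$, then $\sigma_t+\beta\leq\sigma_{t-1}$''; your last paragraph (and, for that matter, the printed statement of (2)) has the inequality on $t$ pointing the wrong way, as one can check on Example \ref{exam:runexam1}, where $\sigma_4+\beta=136>\sigma_3=71$ despite $4<a_0-r'=8$.
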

\begin{proof}
    By Remark \ref{rem:3.switching}, it suffices to prove (1).
    When $\sigma_t\in\PFa(H)$, it is obvious that statement holds since $\sigma_{l+1}\in\PFa(H)$.
    Assume that $\sigma_t\not\in\PFa(H)$.
    Since $\sigma_t\in\Ap(H)$, by Corollary \ref{cor:2.+eta},
    there exists $\eta\in\Ap(H)$ such that $\sigma_t+\eta\in\PFa(H)$.
    Since $t<r$, we have $\sigma_t\in\langle a_1,a_2,\ldots,a_i\rangle$,
    and therefore $\sigma_t+\eta\in\langle a_1,a_2,\ldots,a_i\rangle$ by Lemmas \ref{lem:3.a+a'notinAp} and \ref{lem:3.l<r} (2),
    which proves $v_\alpha(\sigma_t+\eta)<r$.
    Then, we have $\sigma_t+\eta+\alpha\in\PFa(H)$.
    Since $v_\alpha(\sigma_t+\eta+\alpha)=v_\alpha(\sigma_{t+1}+\eta)$ and $\sigma_{t+1}+\eta\in H$, it follows that $\sigma_{t}+\eta+\alpha\leq \sigma_{t+1}+\eta$.
    Hence, $\sigma_t+\alpha\leq \sigma_{t+1}$.
\end{proof}

\begin{lem}\label{lem:3.ainH+,a''inH-}
    The following holds.
    \def\labelenumi{(\theenumi)}
    \begin{enumerate}
        \item If $\sum_{l=1}^ic_la_l\in\Ap(H)\setminus
        \{0\}$ with $c_l\geq0$, then $\sum_{l=1}^ic_la_l\in \Ap_+(H)$.
        \item If $\sum_{l=1}^kc_l''a_l''\in \Ap(H)\setminus\{0\}$ with $c_l''\geq0$, then $\sum_{l=1}^kc_l''a_l''\in \Ap_-(H)$.
    \end{enumerate}
\end{lem}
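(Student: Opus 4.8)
The plan is to establish (1) directly and then obtain (2) from the symmetry recorded in Remark \ref{rem:3.switching}. Recall from \eqref{eq:3.Ap} that $\sigma_{g+1}=a_1'$, so $v_\alpha(a_1')=g+1$ and hence $\Ap_+(H)=\{\sigma_1,\ldots,\sigma_g\}$. Thus proving (1) amounts to showing that any $a=\sum_{l=1}^i c_la_l\in\Ap(H)\setminus\{0\}$ satisfies $v_\alpha(a)\leq g$. Since a nonzero element of $\Ap(H)$ is never a multiple of $a_0$, we have $v_\alpha(a)\geq1$, and because each residue class mod $a_0$ meets $\Ap(H)$ exactly once, $a=\sigma_m$ with $m=v_\alpha(a)$.

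First I would bound $m$ from above using Lemma \ref{lem:3.l<r}. Part (1) of that lemma applies to $0\neq a\in\langle a_1,\ldots,a_i\rangle$ and gives $m<r$. Part (3), combined with the identification $\{a_1',\ldots,a_j'\}=\{\sigma_{g+1},\ldots,\sigma_{h-1}\}$ from \eqref{eq:3.Ap}, forces $\{\sigma_r,\ldots,\sigma_{a_0-r'}\}\subseteq\{\sigma_{g+1},\ldots,\sigma_{h-1}\}$ and hence $r\leq a_0-r'\leq h-1$. Putting these together yields $m<r\leq h-1$, so that $m\in\{1,\ldots,h-1\}$; in particular $\sigma_m$ is none of $\sigma_h,\ldots,\sigma_{a_0-1}$, the elements lying in $\langle a_1'',\ldots,a_k''\rangle$.

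It remains to exclude the middle range $g+1\leq m\leq h-1$. For such $m$, \eqref{eq:3.Ap} identifies $\sigma_m$ with one of the minimal generators $a_1',\ldots,a_j'$. As a minimal generator cannot be expressed through the remaining generators, none of $a_1',\ldots,a_j'$ lies in $\langle a_1,\ldots,a_i\rangle$, contradicting $a=\sigma_m\in\langle a_1,\ldots,a_i\rangle$. Therefore $m\leq g$ and $a\in\Ap_+(H)$, proving (1). Applying the same argument after the substitution $\alpha\mapsto\beta=-\alpha$ of Remark \ref{rem:3.switching}---which interchanges $\{a_1,\ldots,a_i\}$ with $\{a_1'',\ldots,a_k''\}$, and $\Ap_+(H)$ with $\Ap_-(H)$, while leaving the hypothesis $v_\alpha(a_i)+v_{-\alpha}(a_1'')<n-1$ unchanged---then gives (2).

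The work is essentially bookkeeping with indices, and I expect the only point requiring care to be the chain $m<r\leq a_0-r'\leq h-1$: it is exactly this estimate that confines $\sigma_m$ to the first two blocks of \eqref{eq:3.Ap}, letting Lemma \ref{lem:3.l<r}(1) rule out the $\Ap_-(H)$ block and the minimality of $a_1',\ldots,a_j'$ rule out the $\Ap_0(H)$ block. No estimate beyond Lemma \ref{lem:3.l<r} and the decomposition \eqref{eq:3.Ap} appears to be required.
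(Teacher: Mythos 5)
Your proof is correct and takes essentially the same route as the paper's: reduce to (1) via Remark \ref{rem:3.switching}, write the element as $\sigma_m$ and bound $m<r$ by Lemma \ref{lem:3.l<r}(1), then use the block decomposition \eqref{eq:3.Ap} to rule out the remaining two blocks. The only cosmetic difference is that you exclude the last and middle blocks via Lemma \ref{lem:3.l<r}(3) together with minimality of the generators $a_1',\ldots,a_j'$, where the paper instead cites Lemma \ref{lem:3.l<r}(2) and Corollary \ref{cor:3.YnotG}; the substance is identical.
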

\begin{proof}
    By Remark \ref{rem:3.switching}, it suffices to prove (1).
    By Lemma \ref{lem:3.l<r}(1), there is an integer $m<r$ such that $\sigma_m=\sum_{l=1}^ic_la_l$.
    By Lemma \ref{lem:3.l<r}(2), Corollary \ref{cor:3.YnotG} and equation (\ref{eq:3.Ap}), it follows that $\sigma_m\in \Ap_+(H)$.
\end{proof}
\begin{lem}\label{lem:3.sigma<a}
    The following holds.
    \def\labelenumi{(\theenumi)}
    \begin{enumerate}
        \item If the element $\sigma_l\in \Ap_+(H)$, then
        $\sigma_l\not\geq_H a$ for any $a\in G$ with $v_\alpha(a)>v_\alpha(\sigma_l)$.
        \item If the element $\sigma_l\in \Ap_-(H)$, then
        $\sigma_l\not\geq_H a$ for any $a\in G$ with $v_\beta(a)>v_\beta(\sigma_l)$.
    \end{enumerate}
\end{lem}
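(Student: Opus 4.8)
The goal is to prove Lemma \ref{lem:3.sigma<a}, which states a negative result: an element of $\Ap_+(H)$ cannot dominate (in the $\leq_H$ order) any generator sitting strictly higher in the $v_\alpha$-valuation, and symmetrically for $\Ap_-(H)$.

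\medskip

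By Remark \ref{rem:3.switching} it suffices to prove part (1), since the substitution $\beta = -\alpha$ interchanges the roles of $\Ap_+(H)$ and $\Ap_-(H)$ and of $v_\alpha$ and $v_\beta$. So the plan is to fix $\sigma_l \in \Ap_+(H)$ and a generator $a \in G$ with $v_\alpha(a) > v_\alpha(\sigma_l)$, assume for contradiction that $\sigma_l \geq_H a$, and derive a contradiction. The hypothesis $\sigma_l \geq_H a$ means $\sigma_l - a \in H$, and since $\sigma_l \in \Ap(H)$, Lemma \ref{lem:2.ciai,diai} forces $\sigma_l - a \in \Ap(H)$ as well. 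Writing $\sigma_l = a + (\sigma_l - a)$ exhibits $\sigma_l$ as a sum of two elements of the Ap\'ery set, one of which is the generator $a$.

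\medskip

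The key step is to analyze where $a$ can lie. Since $\sigma_l \in \Ap_+(H)$, we have $0 < v_\alpha(\sigma_l) < v_\alpha(a_1')$, and by Lemma \ref{lem:3.l<r}(1) together with the structure of $\Ap_+(H)$ as captured in equation (\ref{eq:3.Ap}), we expect $\sigma_l \in \langle a_1, \ldots, a_i\rangle$. The generator $a$ satisfies $v_\alpha(a) > v_\alpha(\sigma_l)$; I would split into cases according to whether $a$ is one of $a_1, \ldots, a_i$, one of $a_1', \ldots, a_j'$, or one of $a_1'', \ldots, a_k''$. The first case should be handled by valuations directly: if $a = a_m$ for some $m$, then $v_\alpha(a) > v_\alpha(\sigma_l)$ would force $v_\alpha(\sigma_l - a) $ to be computed via the subtraction rule $v_\alpha(\sigma_l) - v_\alpha(a)$, which is negative, so the subtraction identity cannot apply in the direct form and one instead gets $v_\alpha(\sigma_l - a) = v_\alpha(\sigma_l) - v_\alpha(a) + a_0$, placing $\sigma_l - a$ at a large valuation; combined with $\sigma_l - a \in \langle a_1, \ldots, a_i \rangle$ (since $\sigma_l \in \langle a_1,\ldots,a_i\rangle$ and subtracting the generator $a_m$ keeps us in that subsemigroup) this contradicts Lemma \ref{lem:3.l<r}(1), which bounds the valuation of nonzero elements of $\langle a_1, \ldots, a_i\rangle$ by $r$. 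For the second and third cases, where $a \in \{a_1', \ldots, a_j'\} \cup \{a_1'', \ldots, a_k''\}$, I would use Lemma \ref{lem:3.a+a'notinAp}: if $\sigma_l - a$ involves any of the generators $a_1'', \ldots, a_k''$ then the representation of $\sigma_l$ would contain a generator from $\{a_1,\ldots,a_i\}$ (namely from the part of $\sigma_l$) together with one from $\{a_1'',\ldots,a_k''\}$, contradicting that this sum lies outside $\Ap(H)$.

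\medskip

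The main obstacle I anticipate is pinning down exactly which subsemigroup $\sigma_l - a$ belongs to and ruling out the possibility that $a$ is a pseudo-Frobenius generator $a_t'$. Since the $a_t'$ are themselves maximal in $\Ap(H)$ (they lie in $\PFa(H)$), the relation $\sigma_l \geq_H a_t'$ with $\sigma_l \in \Ap(H)$ would combine with Lemma \ref{lem:2.PFexpression}, which characterizes $\PFa(H)$ as the maximal elements of $\Ap(H)$: an element of $\PFa(H)$ admits no strictly larger element of $\Ap(H)$ above it in $\leq_H$, so $\sigma_l \geq_H a_t'$ with $\sigma_l \neq a_t'$ is impossible unless $\sigma_l = a_t'$, which is excluded because $\sigma_l \in \Ap_+(H)$ has $v_\alpha(\sigma_l) < v_\alpha(a_1') \leq v_\alpha(a_t')$. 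Once these cases are dispatched, the contradiction closes uniformly, and the symmetric statement (2) follows verbatim under the $\alpha \mapsto \beta$ switch justified by Remark \ref{rem:3.switching}.
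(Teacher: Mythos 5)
Your overall strategy---reducing to (1) via Remark \ref{rem:3.switching}, assuming $\sigma_l\geq_H a$, deducing $\sigma_l-a\in\Ap(H)$ from Lemma \ref{lem:2.ciai,diai}, and splitting into cases by which block of generators $a$ lies in---matches the paper, and your handling of the case $a\in\{a_1',\ldots,a_j'\}$ by maximality (Lemma \ref{lem:2.PFexpression}) is correct and in fact more explicit than the paper, which passes over that case silently. The problem is in the central case $a=a_m\in\{a_1,\ldots,a_i\}$, in two places. First, ``subtracting the generator $a_m$ keeps us in that subsemigroup'' is not a valid inference: from $\sigma_l\in\langle a_1,\ldots,a_i\rangle$ and $\sigma_l-a_m\in H$ one cannot conclude $\sigma_l-a_m\in\langle a_1,\ldots,a_i\rangle$; you must argue as in your other cases, using $\sigma_l-a_m\in\Ap(H)$ together with equation (\ref{eq:3.Ap}), Lemma \ref{lem:3.a+a'notinAp}, and maximality to exclude the $a''$- and $a'$-alternatives. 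That part is repairable. Second, and more seriously, your contradiction does not close. You compute $v_\alpha(\sigma_l-a_m)=v_\alpha(\sigma_l)-v_\alpha(a_m)+a_0$ and declare it ``large,'' contradicting Lemma \ref{lem:3.l<r}(1). But that lemma only forbids valuations $\geq r=\min v_\alpha(\PFa(H))+v_\alpha(a_i)$, while your estimate only yields $v_\alpha(\sigma_l-a_m)\geq a_0-v_\alpha(a_i)+1$; the needed implication $a_0-v_\alpha(a_i)+1\geq r$ amounts to the inequality $\min v_\alpha(\PFa(H))+2v_\alpha(a_i)\leq a_0+1$, which is nowhere established and does not follow from the standing hypotheses (the assumption $v_\alpha(a_i)+v_{-\alpha}(a_1'')<n-1$ together with $\min v_\alpha(\PFa(H))\leq a_0-n+1$ is compatible with its failure).

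The paper closes this case in the opposite direction: setting $b:=\sigma_l-a_m$, it shows $b\in\Ap_+(H)$ (Lemma \ref{lem:3.ainH+,a''inH-}); moreover $b\notin\PFa(H)$, since $b+a_m=\sigma_l\in\Ap(H)$ would violate maximality, whence $v_\alpha(b)<\min v_\alpha(\PFa(H))$. Then $v_\alpha(b)+v_\alpha(a_m)<\min v_\alpha(\PFa(H))+v_\alpha(a_i)=r\leq a_0-r'<a_0$, so no wraparound modulo $a_0$ occurs, and $v_\alpha(\sigma_l)=v_\alpha(b)+v_\alpha(a_m)>v_\alpha(a_m)$, directly contradicting $v_\alpha(a_m)>v_\alpha(\sigma_l)$. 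Your wraparound computation can be salvaged along the same lines, but only with this sharper bound: from $v_\alpha(b)<\min v_\alpha(\PFa(H))$, $\min v_\alpha(\PFa(H))\leq a_0-n+1$, and $v_\alpha(a_i)\leq n-3$ one gets $v_\alpha(b)\geq a_0-v_\alpha(a_i)+1\geq a_0-n+4>\min v_\alpha(\PFa(H))$, a genuine contradiction. The essential missing idea in your write-up is the non-maximality observation that upgrades ``$v_\alpha(b)<r$'' to ``$v_\alpha(b)<\min v_\alpha(\PFa(H))$''; without it, the bound from Lemma \ref{lem:3.l<r}(1) is too weak for the contradiction you claim.
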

\begin{proof}
    By Remark \ref{rem:3.switching}, it suffices to prove (1).
    Suppose contrary that there is a generator $a\in G$ such that $\sigma_l\geq_H a$ and $v_\alpha(a)>v_\alpha(\sigma_l)$.  
    Since $\sigma_l,a\in\Ap(H)$ and $\sigma_l-a\in H$ by the assumption, we have $\sigma_l-a\in \Ap(H)$.
    If $a\in \Ap_-(H)$, then, we have $\sigma_l\in\langle a_1'',a_2'',\ldots,a_k''\rangle$ by Lemma \ref{lem:3.a+a'notinAp} since $\sigma_l=a+(\sigma_l-a)$.
    However, by Lemma \ref{lem:3.ainH+,a''inH-}, we have $\sigma_l\in \Ap_-(H)$.
    This is contradiction.
    Thus, we may assume $a\in \{a_1,\ldots,a_i\}$ such that $v_\alpha(\sigma_l)<v_\alpha(a)$.
    Since $\sigma_l=(\sigma_l-a)+a$ and $a,\sigma_l-a\in\Ap(H)$, we have $b:=\sigma_l-a\in \langle a_1,a_2,\ldots,a_i\rangle$ by Lemma \ref{lem:3.a+a'notinAp}. 
    Then, by Lemma \ref{lem:3.ainH+,a''inH-} we have $b\in \Ap_+(H)$, but this contradicts 
    $v_\alpha(\sigma_l)=v_\alpha((\sigma_l-a)+a)>v_\alpha(a)$.
\end{proof}

Note that by this lemma, we have $v_\alpha(a_1)=1$ and $v_\alpha(a''_k)=a_0-1$.
\begin{lem}\label{lem:3.pfexpression}
    Let $\PFa(H)=\{p_1,p_2,\ldots,p_{n-1}\}$ with $p_l=p_1+(l-1)\alpha$.
    \def\labelenumi{(\theenumi)}
    \begin{enumerate}
        \item There exist integers $c_1,\ldots,c_i$ such that $p_m=a_m+\sum_{l=m}^i(c_l-1)a_l$ for any $m=1,2,\ldots,i$.
        \item There exist integers $c_1'',\ldots,c_k''$ such that $p_{n-1-k+m}=a_m''+\sum_{l=1}^m(c_l''-1)a_l''$ for any $m=1,2,\ldots,k$.
    \end{enumerate}
\end{lem}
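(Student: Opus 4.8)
The plan is to establish (1) and then deduce (2) by applying (1) to the data $\beta=-\alpha$, $h'=h+n\alpha$ of Remark~\ref{rem:3.switching}, which interchanges the families $a_1,\dots,a_i$ and $a_1'',\dots,a_k''$. We may assume $i\ge 1$, else (1) is vacuous. Ordering $\PFa(H)$ by value so that $p_{l+1}=p_l+\alpha$, and setting $a_{i+1}:=a_1'$, the key reduction is that it suffices to establish the family of \emph{pure-power relations}
\begin{equation*}
a_{m+1}-\alpha=c_m a_m,\qquad c_m\ge 1\quad(m=1,\dots,i),
\end{equation*}
together with the fact that $p_1,\dots,p_i$ are exactly the pseudo-Frobenius numbers lying in $\Ap_+(H)$. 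Indeed, granting the relations, the case $m=i$ gives $p_i=a_1'-\alpha=c_ia_i=a_i+(c_i-1)a_i$, and inductively downward
\begin{equation*}
p_m=p_{m+1}-\alpha=(a_{m+1}-\alpha)+\sum_{l=m+1}^i(c_l-1)a_l=a_m+\sum_{l=m}^i(c_l-1)a_l .
\end{equation*}

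To identify $p_1,\dots,p_i$, I would run a downward induction through the pseudo-Frobenius numbers in $\Ap_+(H)$. By \eqref{eq:3.Ap} we have $\sigma_{g+1}=a_1'\in\PFa(H)$ and $\Ap_+(H)=\{\sigma_1,\dots,\sigma_g\}\subseteq\langle a_1,\dots,a_i\rangle$, and the argument shows the largest such number is $\sigma_g=a_1'-\alpha$. Granting the pure-power relations, passing from one of these numbers to the next smaller one introduces exactly one new generator and lowers the index by one; the process must terminate at $a_1$, because the subsequent element $a_1-\alpha=c_0a_0$ (with $c_0\ge 1$, since $v_\alpha(a_1-\alpha)=0$, $\alpha\notin H$ by Lemma~\ref{lem:2.alphanotinH}, and $a_1-\alpha>0$) contributes a summand $a_0$, hence is $\ge_H a_0$ and so not in $\Ap(H)$. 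As $\PFa(H)$ is itself an arithmetic progression of difference $\alpha$, this forces there to be exactly $i$ pseudo-Frobenius numbers in $\Ap_+(H)$; since $\Ap_+(H)$ realises the smallest values, these are precisely $p_1,\dots,p_i$. Here Corollary~\ref{cor:3.YnotG} and the maximality description in Lemma~\ref{lem:2.PFexpression} are used to see that the constructed elements are genuinely maximal Apéry elements.

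The heart of the proof is the pure-power relation $a_{m+1}-\alpha=c_m a_m$, which I would prove in two stages. First, $a_{m+1}-\alpha\in\Ap(H)$, hence equals $\sigma_t$ with $t:=v_\alpha(a_{m+1})-1$: for $a_{m+1}=a_1'$ this is immediate since $a_1'-\alpha\in\PFa(H)\subseteq\Ap(H)$, and for an intermediate generator one combines $\sigma_t+\alpha\le\sigma_{t+1}=a_{m+1}$ (valid as $t<r$) with the membership $\sigma_t+\alpha\in H$, which by minimality of $\sigma_{t+1}$ forces $\sigma_t+\alpha=a_{m+1}$. Second, Lemma~\ref{lem:3.sigma<a}(1) applied to $\sigma_t\in\Ap_+(H)$ gives $\sigma_t\not\ge_H a$ for every generator with $v_\alpha(a)>t$, whence $\sigma_t\in\langle a_1,\dots,a_m\rangle$; it then remains to collapse this to $\sigma_t\in\langle a_m\rangle$. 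Throughout, the hypothesis $v_\alpha(a_i)+v_{-\alpha}(a_1'')<n-1$ of Theorem~\ref{thm:3.claimA} is used to guarantee $r\le\max v_\alpha(\PFa(H))$, which is exactly what makes the ``add $\alpha$ and stay in $\PFa(H)$'' steps legitimate.

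The main obstacle is precisely this final collapse to a single generator (and, secondarily, the membership $\sigma_t+\alpha\in H$ for the non-pseudo-Frobenius generators $a_2,\dots,a_i$). Applying Lemma~\ref{prop:3.X-o,Y-o} to the consecutive pair $X=\sigma_g$, $Y=\sigma_g-\alpha$ does eliminate the \emph{lowest} generator $a_1$ from a representation of $\sigma_g$ — the choice $\omega=\sigma_g-a_1$ would force $a_1-\alpha\in\Ap(H)$, contradicting $v_\alpha(a_1-\alpha)=0$ — but it does not by itself rule out $a_2,\dots,a_{i-1}$. Upgrading this partial information to the full pure-power statement, by playing the minimality of the generating set (so that $a_{m+1}-a_l\notin H$ for $l\le m$) off against the arithmetic-progression structure of $\PF(H)$, is where I expect the real difficulty to lie.
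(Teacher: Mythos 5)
Your reduction of the lemma to the pure-power relations $a_{m+1}-\alpha=c_ma_m$ (with $a_{i+1}:=a_1'$), together with the identification of $p_1,\dots,p_i$ as the pseudo-Frobenius elements lying in $\Ap_+(H)$, is correct and is indeed equivalent to the statement; the problem is that the proposal never proves either ingredient, as you yourself acknowledge. Concretely, two steps are missing. First, the membership $\sigma_t+\alpha\in H$ for $t=v_\alpha(a_{m+1})-1$: the inequality $\sigma_t+\alpha\le\sigma_{t+1}$ does not give membership, and membership genuinely fails for general $t<r$ (in Example \ref{exam:runexam1}, $\sigma_3+\alpha=73\notin H$ although $3<r=7$); at the special indices you need, $\sigma_t+\alpha\in H$ holds precisely because $\sigma_t=a_{m+1}-\alpha$, which is the relation being proved, so any argument here must avoid a circularity and none is offered. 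Second, the collapse from $\sigma_t\in\langle a_1,\dots,a_m\rangle$ (which Lemma \ref{lem:3.sigma<a} does give) to $\sigma_t\in\langle a_m\rangle$: your only tool, Lemma \ref{prop:3.X-o,Y-o} applied with $\omega=\sigma_g-a_1$, eliminates the single generator $a_1$ and does not iterate. Moreover, your identification of which $p_l$ lie in $\Ap_+(H)$ is run ``granting the relations,'' so it cannot be used to bootstrap them. What remains unproven is exactly the content of the lemma.

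For comparison, the paper never attacks the generators one at a time; it works with the $p_l$ themselves and carries their full expressions along a downward induction. The engine is a Claim: if $p_l\in\langle a_1,\dots,a_i\rangle$ with minimal index $s=\min\{s:a_s\le_Hp_l\}$, then applying Lemma \ref{prop:3.X-o,Y-o} with $X=p_l$, $Y=p_{l-1}$, $\omega=p_l-a_s$, followed by Lemma \ref{lem:3.sigma<a}, places $p_{l-1}-\omega=a_s-\alpha$ in $\langle a_1,\dots,a_{s-1}\rangle$, so the minimal index strictly increases along consecutive pseudo-Frobenius elements. Pigeonhole on these strictly increasing indices bounds $\#(\PFa(H)\cap\Ap_+(H))$ by $i$ (and symmetrically by $k$ via Remark \ref{rem:3.switching}), which with $n-1=i+j+k$ yields the identification with no pure-power input. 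The induction then starts from $\min\{s:a_s\le_Hp_i\}=i$, forcing $p_i=c_ia_i$; in the inductive step one applies Lemma \ref{prop:3.X-o,Y-o} with $\omega=\sum_{l=m+1}^i(c_l-1)a_l$, the tail of the already-established expression of $p_{m+1}$, so that $p_m-\omega\in\Ap(H)$ lies in $\langle a_1,\dots,a_m\rangle$ by Lemma \ref{lem:3.sigma<a}, while the Claim forces $\min\{s:a_s\le_Hp_m\}=m$ and rules out every generator of index $<m$, collapsing $p_m-\omega$ to $c_ma_m$. Your relations then drop out as a corollary, since $a_{m+1}-\alpha=p_m-\omega$. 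The idea your plan lacks is precisely this: perform the collapse not on $a_{m+1}-\alpha$ in isolation, but on $p_m-\omega$, where the inductively known tail of $p_{m+1}$ and the increasing-index Claim are simultaneously available.
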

\begin{proof}
    By Remark \ref{rem:3.switching}, it suffices to prove (1).
    To begin with, we show a following claim.
    \begin{clm*}
        If $p_l\in\langle a_1,\ldots,a_i\rangle$, 
        then $\min \{s:a_s\leq_Hp_l\}>\min\{s:a_s\leq_Hp_{l-1}\}$.
    \end{clm*}
    We assume $p_l=a_s+\sum_{t=s}^id_ta_t$.
    Let $\omega=\sum_{t=s}^id_ta_t$.
    We have $p_l-\omega=a_s$.
    By Lemma \ref{prop:3.X-o,Y-o}, $p_l-\omega\in\Ap(H)$ implies that $p_{l-1}-\omega\in\Ap(H)$.
    By Lemma \ref{lem:3.sigma<a}, we have $\omega'=p_{l-1}-\omega\in\langle a_1,\ldots,a_{s-1}\rangle$.
    Since $p_{l-1}=\omega'+\omega$, it follows that $\min \{s:a_s\leq_Hp_l\}>\min\{s:a_s\leq_Hp_{l-1}\}$.
    
    Next, we show that $\PFa(H)$ consists of $i$ elements of $\Ap_+(H)$, $j$ elements of $\Ap_0(H)$, and $k$ elements of $\Ap_-(H)$.
    Suppose contrary that at least $i+1$ elements of $\Ap_+(H)$ are contained in $\PFa(H)$.
    Assume that $p_1,p_2,\ldots,p_m\in\PFa(H)$ and $m\geq i+1$.
    By Claim, we obtain
    \begin{equation*}
        \min \{s:a_s\leq_Hp_1\}<\min \{s:a_s\leq_Hp_2\}<\cdots
        <\min \{s:a_s\leq_Hp_m\}.
    \end{equation*}
    However, this contradicts $\#\{a_1,a_2.\ldots,a_i\}=i$.
    Thus, there are at most $i$ elements contained in $\PFa(H)\cap\Ap_+(H)$.
    Analogously, by Remark \ref{rem:3.switching}, there are at most $k$ elements contained in $\PFa(H)\cap\Ap_-(H)$.
    By the assumption of Theorem \ref{thm:3.claimA}, we have $j$ elements $a_1',\ldots,a_j'\in\PFa(H)$ and $\#\PFa(H)=n-1=i+j+k$.
    Hence, there are exactly $i$ elements in $\Ap_+(H)$,
    $j$ elements in $\Ap_0(H)$,
    and $k$ elements in $\Ap_-(H)$.
    By this fact and equation (\ref{eq:3.Ap}),
    it follows that $p_1,p_2,\ldots,p_i\in\Ap_+(H),p_{i+1}=a_1',p_{i+2}=a_2',\ldots,p_{i+j}=a_j'$
    and $p_{i+j+1},p_{i+j+2},\ldots,p_{i+j+k}\in\Ap_-(H)$.

    We now prove the statement by induction on $m$.
    We consider the case $m=i$.
    By the Claim, we have
    \begin{equation*}
        \min \{s:a_s\leq_Hp_1\}<\min \{s:a_s\leq_Hp_2\}<\cdots
        <\min \{s:a_s\leq_Hp_i\}.
    \end{equation*}
    If $\min \{s:a_s\leq_Hp_i\}<i$, then we cannot take $p_1$ satisfying this inequality.
    Thus, we have $p_i=c_ia_i$ with $c_i>0$.
    Assume that there are $c_{m+1},c_{m+2},\ldots,c_i$ such that $p_s=a_s+\sum_{l=s}^i(c_l-1)a_l$ for $s=m+1,m+2,\ldots,i$.
    We prove that there is $c_m\geq0$ such that $p_m=a_m+\sum_{l=m}^i(c_l-1)a_l$.
    Let $\omega=\sum_{l=m+1}^i(c_l-1)a_l\in\Ap(H)$.
    By Claim, we have
    \begin{equation*}
        \min \{s:a_s\leq_Hp_1\}<\cdots
        <\min \{s:a_s\leq_Hp_m\}<\min \{s:a_s\leq_Hp_{m+1}\}=m+1.
    \end{equation*}
    Thus, we have $\min \{s:a_s\leq_Hp_m\}=m$.
    Since $\min \{s:a_s\leq_Hp_m\}=m$ and $v_\alpha(p_m-\omega)<v_\alpha(p_{m+1}-\omega)=v_\alpha(a_{m+1})$,
    we conclude $p_m=c_ma_m+\omega=a_m+\sum_{l=m}^i(c_l-1)a_l$ with $c_m>0$.
\end{proof}
By this lemma, all elements of $\PFa(H)$ are as follows
\begin{equation*}
    \PFa(H)=\left\{
    \begin{split}
    &a_1+\sum_{l=1}^i(c_l-1)a_l,a_2+\sum_{l=2}^i(c_l-1)a_l,\ldots,c_ia_i,\\
    &a_1',\ldots,a_j',\\
    &c_1''a_1'',a_2''+\sum_{l=1}^2(c_l''-1)a_l'',\ldots,a_k''+\sum_{l=1}^k(c_l''-1)a_l
    \end{split}
    \right\}.
\end{equation*}
\begin{exam}
    Let $H$ be as in Example \ref{exam:runexam1}.
    Then,
    \begin{equation*}
        \PFa(H)=\{138,140,142,144,146,148\}
        =\{a_1+a_3,a_2+a_3,2a_3,a_1',a_2',4a_1''\}.
    \end{equation*}
    
\end{exam}

We next give the lower bound for $\#\Ap(H)$.
\begin{dfn}
    By Lemmas \ref{prop:3.X-o,Y-o} and \ref{lem:3.pfexpression}, the element
    \begin{equation}\label{eq:3.pfexpress}
        a_m+\sum_{l=m}^id_la_l
    \end{equation}
    with $0\leq d_l\leq c_l-1$ for $l=m,m+1,\ldots,i$ is contained in $\Ap(H)$.
    Let $B_+$ be a set of all elements of the form (\ref{eq:3.pfexpress}).
    We call an expression (\ref{eq:3.pfexpress}) the \textbf{canonical expression} of an element of $B_+$.
    We define $B_-$ using $a_1'',a_2'',\ldots a_k''$ in the same way as $B_+$.
\end{dfn}
\begin{lem}\label{lem:3.canonicalexpress}
    The following statements holds.
    \def\labelenumi{(\theenumi)}
    \begin{enumerate}
        \item If $a,a'\in B_+$ have different canonical expressions, then $a\neq a'$.
        \item If $a,a'\in B_-$ have different canonical expressions, then $a\neq a'$.
    \end{enumerate}
\end{lem}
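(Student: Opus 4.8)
The plan is to prove statement (1); statement (2) then follows by the symmetry of Remark \ref{rem:3.switching}, replacing $\alpha$ by $\beta=-\alpha$ so that $B_-$ plays the role of $B_+$. For (1), I would first reduce the claim to a statement about $v_\alpha$. All elements of $B_+$ lie in $\Ap(H)$, and since $\gcd(a_0,\alpha)=1$ the map $v_\alpha$ is a bijection between the residue classes modulo $a_0$ and $\{0,1,\ldots,a_0-1\}$; as $\Ap(H)$ contains exactly one element in each residue class, $v_\alpha$ is injective on $\Ap(H)$. Hence two elements of $B_+$ coincide if and only if they share the same value of $v_\alpha$, and it suffices to show that distinct canonical expressions yield distinct values of $v_\alpha$. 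Writing $w_l=v_\alpha(a_l)$ and noting that every canonical expression and each of its subsums lies in $\Ap_+(H)$ (by Lemma \ref{lem:3.ainH+,a''inH-}), additivity of $v_\alpha$ gives, for the element $a_m+\sum_{l=m}^i d_la_l$, the value $w_m+\sum_{l=m}^i d_lw_l=\sum_{l=1}^i\mu_lw_l$, where $\mu_l$ denotes the total multiplicity of $a_l$.

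The heart of the argument is an arithmetic relation among the $w_l$. From Lemma \ref{lem:3.pfexpression} we have $p_m=a_m+\sum_{l=m}^i(c_l-1)a_l\in\Ap_+(H)$, so $v_\alpha(p_m)=c_mw_m+\sum_{l=m+1}^i(c_l-1)w_l$. Because $\PF(H)$ is arithmetic with common difference $\alpha$, the values $v_\alpha(p_1),\ldots,v_\alpha(p_i)$ are consecutive integers, whence $v_\alpha(p_m)-v_\alpha(p_{m+1})=-1$. Comparing the two expressions gives $c_mw_m-w_{m+1}=-1$, that is,
\begin{equation*}
    w_{l+1}=c_lw_l+1\qquad(1\leq l\leq i-1),
\end{equation*}
together with $w_1=1$. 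Telescoping this recursion shows that the largest value attained by a subsum built only from $a_1,\ldots,a_l$, namely $\sum_{t=1}^l(c_t-1)w_t$, equals $w_{l+1}-l-1$, and in particular is strictly smaller than $w_{l+1}$.

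Finally I would recover the multiplicities greedily from the top. The bound just obtained shows that in $v_\alpha(N)=\sum_{l=1}^i\mu_lw_l$ the contribution of $a_1,\ldots,a_{l-1}$ is always less than $w_l$, even allowing the single enlarged digit $\mu_m\leq c_m$ at the threshold index $m$; consequently $\mu_i=\lfloor v_\alpha(N)/w_i\rfloor$, and after subtracting $\mu_iw_i$ the same reasoning determines $\mu_{i-1}$, and so on down to $\mu_1$. Thus all multiplicities, and therefore the threshold index $m$ together with the digits $d_l$, are uniquely determined by $v_\alpha(N)$, which proves the injectivity. I expect the main obstacle to be establishing the recursion $w_{l+1}=c_lw_l+1$ and the resulting strict inequality $\sum_{t=1}^l(c_t-1)w_t<w_{l+1}$: this is exactly the mixed-radix spacing that prevents two distinct canonical expressions from collapsing to the same value of $v_\alpha$, and it relies essentially on the arithmetic structure of $\PF(H)$ through Lemma \ref{lem:3.pfexpression}.
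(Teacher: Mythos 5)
Your proposal is correct, but it takes a genuinely different route from the paper. The paper's proof is order-theoretic and runs by contradiction: if two distinct canonical expressions gave the same element, it cancels the common generators, takes the largest index $M$ appearing in either expression, and (via Lemma \ref{lem:3.pfexpression}) produces elements $p,q\in\PFa(H)$ with $p\leq_H q$, contradicting the maximality of pseudo-Frobenius elements inside $\Ap(H)$ (Lemma \ref{lem:2.PFexpression}). You instead translate everything into modular arithmetic: from Lemma \ref{lem:3.pfexpression} and the fact that $v_\alpha(\PFa(H))$ is a block of consecutive integers you extract $v_\alpha(a_1)=1$ and the recursion $v_\alpha(a_{l+1})=c_lv_\alpha(a_l)+1$, which exhibits the canonical expressions as mixed-radix representations whose $v_\alpha$-values cannot collide. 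Your route is constructive (one can decode the expression from the value) and yields for free the count $\#B_+=\sum_{l=1}^i\prod_{m=l}^ic_m$ used in Corollary \ref{cor:3.Apnumber}, whereas the paper's argument is shorter and never has to control reduction modulo $a_0$. That control is the one step of yours that needs shoring up: additivity of $v_\alpha$ is conditional (it holds only when the sum of the values is at most $a_0-1$), and membership of the subsums in $\Ap_+(H)$, which is all you cite, does not by itself exclude wraparound. The fix is the argument inside Lemma \ref{lem:3.l<r}: every proper subsum of a canonical expression lies in $\Ap(H)$ but is not maximal there, hence not in $\PFa(H)$, so its value is less than $\min v_\alpha(\PFa(H))$; adding one generator then keeps every intermediate sum of values below $r=\min v_\alpha(\PFa(H))+v_\alpha(a_i)\leq a_0-r'\leq a_0-1$, where the inequality $r\leq a_0-r'$ is exactly the hypothesis $v_\alpha(a_i)+v_{-\alpha}(a_1'')<n-1$ of Theorem \ref{thm:3.claimA} --- so your argument, like the paper's, genuinely uses that hypothesis. (The recursion step itself is safe, since the values $v_\alpha(p_m)$ lie in a consecutive block inside $\{1,\ldots,a_0-1\}$.) With that repair, your telescoping bound and greedy digit recovery go through, and statement (2) follows from Remark \ref{rem:3.switching} exactly as in the paper.
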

\begin{proof}
    By Remark \ref{rem:3.switching}, it suffices to prove (1).
    Suppose contrary that $a=a'$.
    We assume that $a,a'\in B_+$ have different canonical expression.
    If a generator $a_m$ appears in canonical expressions of both $a$ and $a'$, $a-a_m$ and $a'-a_m$ satisfy the same condition.
    We may assume that $a=\sum_{l=1}^{u}d_{s_l}a_{s_l}$ and $a'=\sum_{l=1}^{v}d_{t_l}a_{t_l}$ 
    with $\{s_1,s_2,\ldots,s_u\}\cap\{t_1,t_2,\ldots,t_v\}=\varnothing$.
    Let $M:=\max(\{s_1,s_2,\ldots,s_u\}\cup\{t_1,t_2,\ldots,t_v\})$ and without loss of generality, 
    we may assume $a\geq_Ha_M$.
    Let $p=a_M+\sum_{l=M}^i(d_l-1)a_l$ and let $q\in \PFa(H)$ such that $q\geq_Ha'$.
    Then, we obtain 
    \begin{align*}
        q-a'+a&=\left(q-a'+a+a_M+\sum_{l=M}^i(d_l-1)a_l\right)-\left(a_M+\sum_{l=M}^i(d_l-1)a_l\right)\\
        &=\left\{\left(q-\sum_{l=M}^i(d_l-1)a_l\right)-a'+(a-a_M)\right\}+a_M+\sum_{l=M}^i(d_l-1)a_l\\
        &=\left\{\left(q-\sum_{l=M}^i(d_l-1)a_l\right)-a'+(a-a_M)\right\}+p.
    \end{align*}
    We have $p\leq_H q$, but this is in conflict  with $p\in\PFa(H)$.
\end{proof}
\begin{cor}\label{cor:3.Apnumber}
    $\#\Ap(H)\geq \left(\sum_{l=1}^i\prod_{m=l}^ic_m\right)+j+\left(\sum_{l=1}^k\prod_{m=1}^lc_m''\right)+1$.
\end{cor}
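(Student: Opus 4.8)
The plan is to produce exactly $\left(\sum_{l=1}^i\prod_{m=l}^ic_m\right)+j+\left(\sum_{l=1}^k\prod_{m=1}^lc_m''\right)+1$ pairwise distinct elements of $\Ap(H)$, namely the elements of $B_+$, the generators $a_1',\ldots,a_j'$, the elements of $B_-$, and $0$. By the definition of $B_+$ and $B_-$ (which rests on Lemmas \ref{prop:3.X-o,Y-o} and \ref{lem:3.pfexpression}), together with $\{a_1',\ldots,a_j'\}\subseteq\PFa(H)\subseteq\Ap(H)$ and $0\in\Ap(H)$, all four families already lie inside $\Ap(H)$; it therefore remains to count them and to check that they are mutually disjoint, after which summing cardinalities gives the bound.

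First I would count $B_+$. A canonical expression $a_m+\sum_{l=m}^id_la_l$ is determined by its leading index $m\in\{1,\ldots,i\}$ together with the tuple $(d_m,\ldots,d_i)$, where each $d_l$ ranges independently over $\{0,1,\ldots,c_l-1\}$. For a fixed $m$ this produces $\prod_{l=m}^ic_l$ expressions, so summing over $m$ yields $\sum_{m=1}^i\prod_{l=m}^ic_l=\sum_{l=1}^i\prod_{m=l}^ic_m$ canonical expressions in total. By Lemma \ref{lem:3.canonicalexpress}(1), distinct canonical expressions represent distinct elements, hence $\#B_+=\sum_{l=1}^i\prod_{m=l}^ic_m$. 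The identical argument, now using Lemma \ref{lem:3.canonicalexpress}(2), gives $\#B_-=\sum_{l=1}^k\prod_{m=1}^lc_m''$.

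For disjointness I would use the decomposition $\Ap(H)=\{0\}\cup\Ap_+(H)\cup\Ap_0(H)\cup\Ap_-(H)$ into the zero element and the three ranges of $v_\alpha$-values, recalling that $v_\alpha$ is injective on $\Ap(H)$. Every element of $B_+$ lies in $\langle a_1,\ldots,a_i\rangle\setminus\{0\}$, so $B_+\subseteq\Ap_+(H)$ by Lemma \ref{lem:3.ainH+,a''inH-}(1); symmetrically $B_-\subseteq\Ap_-(H)$; the generators $a_1',\ldots,a_j'$ lie in $\Ap_0(H)$; and $0$ belongs to none of $\Ap_+(H),\Ap_0(H),\Ap_-(H)$, each of which forces $v_\alpha>0$. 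Thus the four families occupy pairwise disjoint portions of $\Ap(H)$, and adding their sizes gives the asserted inequality. The only genuinely delicate input is the distinctness of canonical expressions, but that is precisely Lemma \ref{lem:3.canonicalexpress}; the remainder is bookkeeping, so I anticipate no serious obstacle beyond correctly matching the two opposite index conventions for $B_+$ (small leading index, ascending summands) and $B_-$ (large leading index, descending summands), which Remark \ref{rem:3.switching} reconciles.
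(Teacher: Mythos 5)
Your proposal is correct and follows essentially the same route as the paper: both count $\#B_+$ and $\#B_-$ by enumerating canonical expressions via Lemma \ref{lem:3.canonicalexpress} (together with Lemma \ref{lem:3.pfexpression}) and then add the cardinalities of $B_+$, $\{a_1',\ldots,a_j'\}$, $B_-$, and $\{0\}$ inside $\Ap(H)$. Your explicit disjointness check using $\Ap_+(H)\cup\Ap_0(H)\cup\Ap_-(H)$ and Lemma \ref{lem:3.ainH+,a''inH-} is a detail the paper leaves implicit, and it is a welcome addition rather than a deviation.
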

\begin{proof}
     Counting the number of possible canonical expressions,
     it follows from Lemmas \ref{lem:3.pfexpression} and \ref{lem:3.canonicalexpress} that $\#B_+=\sum_{l=1}^i\prod_{m=l}^ic_m$ and $\#B_-=\sum_{l=1}^k\prod_{m=1}^{l}c_m$ respectively.
     Since $\Ap(H)\supseteq B_+\cup\{a_1',a_2',\ldots,a_j'\}\cup B_-\cup\{0\}$,
     it follows that $\#\Ap(H)\geq\#B_++\#\{a_1',a_2',\ldots,a_j'\}+\#B_-+1$.
\end{proof}
Finally, we show that $\mathrm{I}_2(M)=I_H$.
\begin{proof}[Proof of Theorem \ref{thm:3.claimA}]
    Since $h+\alpha+a_0=a_1+\sum_{l=1}^i(c_l-1)a_l$ and $v_\alpha(a_1)=v_\alpha(\alpha)=1$, we have $v_\alpha(h)=v_\alpha(\sum_{l=1}^i(c_l-1)a_l)$.
    Since $\alpha\not\in H$ by Lemma \ref{lem:2.alphanotinH},
    we have $\alpha<a_1$.
    Thus, we obtain $\sigma_{v_\alpha(h)}=\sum_{l=1}^i(c_l-1)a_l<h+a_0$.
    In other words, there exists an integer $c_0\in\mathbb{Z}_{>0}$ such that $h+a_0=\sum_{l=1}^i(c_l-1)a_l+c_0a_0$.
    Analogously, there exists an integer $c_0''\in\mathbb{Z}_{>0}$ such that $h+n\alpha+a_0=\sum_{l=1}^k(c_l''-1)a_l''+c_0''a_0$.
    Let
    \begin{gather*}
         M_+'=
        \begin{pmatrix}
            X_0^{c_0}\prod_{l=1}^iX_l^{c_l-1}&X_1^{c_1}\prod_{l=2}^iX_l^{c_l-1}&\cdots&X_{i-1}^{c_{i-1}}X_i^{c_i-1}&X_i^{c_i}\\
            X_1^{c_1}\prod_{l=2}^iX_l^{c_l-1}&X_2^{c_2}\prod_{l=3}^iX_l^{c_l-1}&\cdots&X_i^{c_i}&X_1'
        \end{pmatrix}\\
         M_0=
        \begin{pmatrix}
            X_1'&X_2'&\cdots &X_{j-1}'\\
            X_2'&X_3'&\cdots &X_j'
        \end{pmatrix}\\
        M_-'=
        \begin{pmatrix}
            X_j'&X_1''^{c_1''}&X_1''^{c_1''-1}X_2''^{c_2''}&\cdots&X_{k-1}''^{c_{k-1}''}\prod_{l=1}^{k-2}X_l''^{c_l''-1}&X_{k}''^{c_{k}''}\prod_{l=1}^{k-1}X_l''^{c_l-1''}\\
            X_1''^{c_1''}&X_1''^{c_1''-1}X_2''^{c_2''}&X_3''^{c_3''}\prod_{l=1}^2X_l''^{c_l''-1}&\cdots&X_{k}''^{c_{k}''}\prod_{l=1}^{k-1}X_l''^{c_l-1''}&X_0^{c_0''}\prod_{l=1}^{k}X_l''^{c_l-1''}
        \end{pmatrix}.
    \end{gather*}
    By the construction of $M_+',M_0,M_-'$, in each column of $M_+,M_0,M_-$, if we denote the top entry by $f$ and the bottom entry by $g$,
    then $\deg \varphi(f)-\deg \varphi(g)=\alpha$ is constant.
    Since $M:=(M_+~M_0~M_-)$ is obtained by dividing each column of $(M_+'~M_0'~M_-')$ by the greatest common devisor of its two entries,
    by Lemma\ref{lem:2.I2M<IH},
    we have $\mathrm{I}_2(M)\subseteq I_H$.
    
    We now prove $\mathrm{I}_2(M)=I_H$ using Lemma \ref{prop:2.JinIH}.
    Since $\mathrm{I}_2(M)\subseteq I_H$,
    we have $\dim_k(S/(\mathrm{I}_2(M)+(X_0)))\geq\dim_k(S/(I_H+(X_0)))$.
    Since $I_H=\ker \varphi$, it follows that $\varphi(I_H+(X_0))=(t^{a_0})$.
    In other words, $S/(I_H+(X_0))\cong k[H]/(t^{a_0})$.
    Therefore, we obtain
    \begin{equation*}
        \dim_k(S/(I_H+(X_0)))=\dim_k(k[H]/(t^{a_0}))=a_0.
    \end{equation*}
    Thus, we have 
    \begin{equation}\label{ineq:3.dimgeqa0}
    \dim_k(S/(\mathrm{I}_2(M)+(X_0)))\geq a_0.    
    \end{equation}
    Let $LT(\mathrm{I}_2(M)+(X_0))$ be the initial ideal of $\mathrm{I}_2(M)+(X_0)$
    with respect to the weighted degree reverse lexicographic order with
    $X_0>X_1>\cdots>X_i>X_1'>\cdots>X_j'>X_1''>\cdots>X_k''$.
    Recall $\dim_k(S/(\mathrm{I}_2(M)+(X_0)))=\dim_k(S/LT(\mathrm{I}_2(M)+(X_0)))$
    \cite[Proposition 4 of Chapter 5 \S 3]{Cox2025}.
   
    This initial ideal $LT(\mathrm{I}_2(M)+(X_0))$ contains the following monomials.
    \begin{gather*}
    X_0,X_sX_{s'}^{c_{s'}},X_sX_t',X_sX_u'',
    X'_tX_{t'}',X_t'X_u'',X_u''^{c_{u}''}X_{u'}''\\
        (1\leq s \leq s'\leq i,1\leq t\leq t'\leq j,1\leq u\leq u'\leq k).
    \end{gather*}
    Then, $S/(I_H+(X_0))$ is generated, as a vector space, by monomials that are not divisible by the above monomials.
    They are
    \begin{gather*}
        X_1^{s_1}\prod_{l=2}^iX_l^{s_l}(1\leq s_1\leq c_1,0\leq s_l \leq c_l-1(2\leq l \leq i))\\
        X_2^{s_2}\prod_{l=3}^iX_l^{s_l}(1\leq s_2\leq c_2,0\leq s_l \leq c_l-1(3\leq l \leq i))\\
        \vdots\\
        X_{i-1}^{s_{i-1}}X_i^{s_i}(1\leq s_{i-1}\leq c_{i-1},0\leq s_i \leq c_i-1)
        \end{gather*}
        \begin{gather*}
        X_i^{s_i}(1\leq s_i\leq c_i),\\
        X_1',X_2',\ldots,X_j',\\
        X_1''^{t_1}(1\leq t_1\leq c_1'')\\
        X_{2}''^{t_{2}}X_1''^{t_1}(1\leq t_{2}\leq c_{2}'',0\leq t_1 \leq c_1''-1)\\
        \vdots\\
        X_k''^{t_k}\prod_{l=1}^{k-1}X_l''^{t_l}(1\leq t_k\leq c_k'',0\leq t_l \leq c_l''(1\leq l \leq k-1) \\
        1.
    \end{gather*}
    Thus, we have
    \begin{equation}
        \dim_k(S/\mathrm{I}_2(M)+(X_0))\leq \left(\sum_{l=1}^i\prod_{m=l}^ic_m\right)+j+\left(\sum_{l=1}^k\prod_{m=1}^lc_m\right)+1.
    \end{equation}
    By Corollary \ref{cor:3.Apnumber}, it follows that $\#\Ap(H)\geq \left(\sum_{l=1}^i\prod_{m=l}^ic_m\right)+j+\left(\sum_{l=1}^k\prod_{m=1}^lc_m\right)+1$.
    Consequently, we have
    \begin{equation}\label{ineq:3.dimandAp}
        \dim_k(S/(\mathrm{I}_2(M)+(X_0)))\leq\#\Ap(H)=a_0.
    \end{equation}
    By the inequalities (\ref{ineq:3.dimgeqa0}) and (\ref{ineq:3.dimandAp}), we obtain $\dim_k(S/(\mathrm{I}_2(M)+(X_0)))=a_0.$
    By Lemma \ref{prop:2.JinIH}, we conclude $\mathrm{I}_2(M)=I_H$.
\end{proof}

\begin{exam}
    Let $H$ be as in Example \ref{exam:runexam1}.
    We have $\PFa(H)=\{a_1+a_3,a_2+a_3,2a_3,a_1',a_2',4a_1''\}$,
     $h+a_0=a_3+5a_0$, and $h+n\alpha+a_0=3a_1''+3a_0$.
     Then, we have
     \begin{equation*}
         M'=\begin{pmatrix}
             X_0^5X_3&X_1X_3&X_2X_3&X_3^2&X_1'&X_2'&X_1''^4\\
             X_1X_3&X_2X_3&X_3^2&X_1'&X_2'&X_1''^4&X_0^3X_1''^3
         \end{pmatrix},
     \end{equation*}
     and
     \begin{equation*}
         M=\begin{pmatrix}
             X_0^5&X_1&X_2&X_3^2&X_1'&X_2'&X_1''\\
             X_1&X_2&X_3&X_1'&X_2'&X_1''^4&X_0^3
         \end{pmatrix}.
     \end{equation*}
     By the proof of Theorem \ref{thm:3.claimA}, it follows that $I_H=\mathrm{I}_2(M)$.
\end{exam}
\section{Further Implications and Examples}

In this section, using the same technique as in the Theorem \ref{thm:3.claimA}, we prove another result that supports Conjecture \ref{conj}.
Let $H$ be a numerical semigroup with embedding dimension $n$ and multiplicity $a_0$.
Assume that 
\def\labelenumi{(\theenumi)}
\begin{enumerate}
    \item $\PF(H)=\{h+\alpha,h+2\alpha,\ldots,h+(n-1)\alpha\}$ with $h>0$, $\alpha>0$, and $\gcd(a_0,\alpha)=1$.
    \item The element $h+(n-1)\alpha+a_0$ is contained in the minimal generating system of $H$ and $v_\alpha(h+(n-1)\alpha)=a_0-1$.
\end{enumerate}
Let $G$ be the minimal system of generators of $H$.
Set $G=\{a_0,a_1,\ldots,a_i,a_1',\ldots,a_j'\}$, where
\begin{itemize}
    \item $v_\alpha(a_0)=0<v_\alpha(a_1)<\cdots<v_\alpha(a_i)<v_\alpha(a_1')<\cdots<v_\alpha(a_j')=a_0-1$
    \item $G\cap \PFa(H)=\{a_1',\ldots,a_j'\}$.
\end{itemize}
Let $\varphi:S=k[X_0,X_1,\ldots,X_i,X_1',\ldots,X_j']\to k[H]$ be $\varphi(X_s)=t^{a_s}$ and $\varphi(X_t')=t^{a_t'}$, and $I_H=\ker\varphi$.
\begin{thm}\label{thm:3.2claim2}
    Under the above assumption, 
    there exist integers $c_0,c_1,\ldots,c_{n-1},c_0'$ satisfying the following equation.
    \begin{equation*}
        I_H=\mathrm{I}_2
        \begin{pmatrix}
            X_0^{c_0} & X_1^{c_2} & \cdots &X_{i-1}^{c_{i-1}} & X_i^{c_i}&X_1'&\cdots&X_{j-1}'&X_j'\\
            X_1 & X_2 & \cdots & X_i &X_1'&X_2'&\cdots&X_j'& X_0^{c_0'}
        \end{pmatrix}
    \end{equation*}
\end{thm}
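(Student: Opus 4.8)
The plan is to follow the five-step scheme used for Theorem \ref{thm:3.claimA}, reusing the analysis of the ``$+$'' block almost verbatim while letting assumption (2) take over the roles previously played by the ``$-$'' block and by the smallness hypothesis $v_\alpha(a_i)+v_{-\alpha}(a_1'')<n-1$. First I would record the two structural consequences of the condition $v_\alpha(h+(n-1)\alpha)=a_0-1$, that is, of $v_\alpha(a_j')=a_0-1$. Since $a_0-1$ is the maximal possible residue, no Apéry element can have residue exceeding $v_\alpha(a_j')$, so $\Ap_-(H)=\varnothing$; this is exactly what renders Lemma \ref{lem:3.a+a'notinAp} (the only place the smallness hypothesis was invoked) vacuous here, there being no $a''$ generators to mix with. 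Moreover $v_\alpha(a_j'+\alpha)\equiv 0 \pmod{a_0}$, so $a_j'+\alpha$ is a positive multiple of $a_0$, say $a_j'+\alpha=c_0'a_0$ with $c_0'>0$. This single identity is what legitimises the wrap-around column $\begin{psmallmatrix}X_j'\\X_0^{c_0'}\end{psmallmatrix}$.

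Next I would re-run the ``$+$''-side structural lemmas. Because no $a''$ generators are present and $\Ap_-(H)=\varnothing$, the arguments of Lemmas \ref{lem:3.l<r}(1), \ref{prop:3.X-o,Y-o} and \ref{lem:3.pfexpression}(1) apply with no reference to the ``$-$'' side, yielding the canonical expressions $p_m=a_m+\sum_{l=m}^i(c_l-1)a_l$ for the $i$ pseudo-Frobenius numbers lying in $\Ap_+(H)$, together with the count $\#B_+=\sum_{l=1}^i\prod_{m=l}^ic_m$ coming from Lemma \ref{lem:3.canonicalexpress}(1) and Corollary \ref{cor:3.Apnumber}. For the middle block I would exploit the arithmetic structure of $\PFa(H)$: consecutive pseudo-Frobenius numbers differ by $\alpha$, whence $a_t'=a_1'+(t-1)\alpha$. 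This forces the residues of $a_1',\dots,a_j'$ to be consecutive and, since $v_\alpha(a_j')=a_0-1$, to exhaust $\Ap_0(H)$, so that $\Ap_0(H)=\{a_1',\dots,a_j'\}$ and $\#\Ap_0(H)=j$; it also shows the $2$-minors of $M_0$ lie in $I_H$, since $a_t'+a_{t'+1}'=a_{t+1}'+a_{t'}'=2a_1'+(t+t'-1)\alpha$.

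With the structure in hand I would assemble the matrix exactly as in Theorem \ref{thm:3.claimA}: first the un-reduced matrix $M'$ in which the two entries of every column have the same degree difference under $\varphi$, the new wrap-around column satisfying this via $a_j'+\alpha=c_0'a_0$ from the first paragraph, and then divide each column by the greatest common divisor of its two entries to obtain $M$, so that Lemma \ref{lem:2.I2M<IH} gives $\mathrm{I}_2(M)\subseteq I_H$. Finally I would invoke the dimension criterion of Lemma \ref{prop:2.JinIH}. The inclusion $\mathrm{I}_2(M)\subseteq I_H$ yields $\dim_k(S/(\mathrm{I}_2(M)+(X_0)))\geq a_0$, and computing the initial ideal of $\mathrm{I}_2(M)+(X_0)$ with respect to the same weighted order produces precisely the standard monomials of $B_+$-type, the middle variables $X_1',\dots,X_j'$, and $1$, for a total of $\#B_++j+1$; combined with $\#\Ap(H)=a_0\geq\#B_++j+1$ this squeezes $\dim_k(S/(\mathrm{I}_2(M)+(X_0)))=a_0$, whence $\mathrm{I}_2(M)=I_H$.

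I expect the main obstacle to be the Gröbner-basis bookkeeping around the wrap-around column: I must verify that the $2$-minors involving $\begin{psmallmatrix}X_j'\\X_0^{c_0'}\end{psmallmatrix}$ reduce, modulo $(X_0)$, to leading terms already accounted for, so that the standard monomial basis remains exactly $\#B_++j+1$ with no spurious classes contributed by the exponent $c_0'$. A secondary but essential check is that the ``$+$''-side lemmas genuinely survive the replacement of the smallness hypothesis by assumption (2); the delicate point is that $\Ap_-(H)=\varnothing$ must fully substitute for Lemma \ref{lem:3.a+a'notinAp} at every place the latter was used downstream.
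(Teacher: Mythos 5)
Your overall scheme (reduce everything to the ``$+$''-side analysis, wrap around with $a_j'+\alpha\in a_0\mathbb{Z}_{>0}$, then get the inclusion via Lemma \ref{lem:2.I2M<IH} and the dimension squeeze via Lemma \ref{prop:2.JinIH}) is indeed the paper's scheme, and your identity $a_j'+\alpha=c_0'a_0$ is the right normalization for the wrap-around column. But there is a genuine gap at the point you dismiss as automatic: the claim that Lemmas \ref{lem:3.l<r}(1), \ref{prop:3.X-o,Y-o} and \ref{lem:3.pfexpression}(1) ``apply with no reference to the $-$ side.'' The proof of Lemma \ref{prop:3.X-o,Y-o} (and likewise of the lemma giving $\sigma_t+\alpha\leq\sigma_{t+1}$) ends with the step: $Y-\omega+ta_0+\eta\in\PFa(H)\cap\langle a_1,\ldots,a_i\rangle$ satisfies $v_\alpha(Y-\omega+ta_0+\eta)<r\leq\max v_\alpha(\PFa(H))$, hence adding $\alpha$ stays inside $\PFa(H)$, which forces $t=0$. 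The inequality $r\leq\max v_\alpha(\PFa(H))$ is \emph{exactly} the smallness hypothesis (in the $k=0$ reading, $v_\alpha(a_i)<n-1$), and it is not among the assumptions of Theorem \ref{thm:3.2claim2}: there $v_\alpha(a_i)$ can be as large as $a_0-n$, which exceeds $n-2$ whenever $a_0\geq 2n-1$. So the ``$+$''-side lemmas do not survive verbatim; without a substitute, nothing prevents $Y-\omega+ta_0+\eta$ from being the top element $p_{n-1}+a_0$ of $\PFa(H)$, in which case adding $\alpha$ exits $\PFa(H)$ and the argument collapses. Your proposed substitute --- $\Ap_-(H)=\varnothing$ rendering Lemma \ref{lem:3.a+a'notinAp} vacuous --- does not address this; that lemma is irrelevant to the top-of-progression obstruction.

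The missing idea is precisely where assumption (2) does its real work in the paper (Lemmas \ref{prop:3.2X-o,Y-o} and \ref{lem:3.2sigmak+a<sigmak+1}): since $p_{n-1}+a_0=a_j'$ is a \emph{minimal generator}, an element of $\PFa(H)$ that is a nontrivial sum of two elements of $H$ (such as $(Y-\omega+ta_0)+\eta$, after disposing of the degenerate cases) or that lies in $\langle a_1,\ldots,a_i\rangle$ can never equal $a_j'$; hence it equals some $p_l+a_0$ with $l\leq n-2$, and adding $\alpha$ lands back in $\PFa(H)$. This maximal-element-is-a-generator trick, not the emptiness of $\Ap_-(H)$, is the substitute for the smallness hypothesis. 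Once it is inserted, the rest of your outline --- the canonical expressions, the count $\#\Ap(H)\geq\#B_++j+1$, the initial-ideal computation and the squeeze to $\dim_k(S/(\mathrm{I}_2(M)+(X_0)))=a_0$ --- goes through exactly as you describe.
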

The proof proceeds in the same way as in the proof of Theorem \ref{thm:3.claimA}, so we only sketch the proof.
By the assumption, we have 
\begin{equation*}
    v_\alpha(\PFa(H))=\{
a_0-(n-1),a_0-(n-2),\ldots,a_0-1\}.
\end{equation*}
Let $p_i=h+i\alpha\in\PF(H)$.
\begin{lem}\label{prop:3.2X-o,Y-o}
    Suppose that $X\in\PFa(H)\setminus\{a_1',\ldots,a_j'\},~\omega\in\Ap(H)$.
    If $Y=X-\alpha\in\PFa(H)$ and $X-\omega\in\Ap(H)$, then $Y-\omega\in\Ap(H)$.
\end{lem}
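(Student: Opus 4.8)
The plan is to prove Lemma~\ref{prop:3.2X-o,Y-o} by mimicking the argument of Lemma~\ref{prop:3.X-o,Y-o}, but in the simpler situation where $H$ has no ``$a''$'' generators, so that $G=\{a_0,a_1,\ldots,a_i,a_1',\ldots,a_j'\}$ and every element of $\Ap(H)$ decomposes using only the $a_l$ and the pseudo-Frobenius generators $a_t'$. First I would record the basic strategy: starting from $X-\omega\in\Ap(H)$, I want to show $Y-\omega=(X-\omega)-\alpha$ lands in $\Ap(H)$. The natural first move is to rule out that subtracting $\alpha$ drops us below the minimal representative in the same residue class, i.e.\ to show $Y-\omega\notin H\setminus\Ap(H)$. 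This is the verbatim analogue of the middle paragraph of Lemma~\ref{prop:3.X-o,Y-o}: if $Y-\omega\in H\setminus\Ap(H)$, then there is $b\in H$ with $v_\alpha(b)=v_\alpha(Y-\omega)$ and $b<Y-\omega$, whence $b+\omega<Y$, $b+\omega\in H$, and $v_\alpha(b+\omega)=v_\alpha(Y)$, contradicting $Y\in\Ap(H)\supseteq\PFa(H)$. So $Y-\omega\in\Ap(H)$ or $Y-\omega\notin H$, and there is a minimal $t\geq0$ with $Y-\omega+ta_0\in\Ap(H)$; the whole task reduces to showing $t=0$.

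To force $t=0$ I would analyze where $Y-\omega+ta_0$ can sit in $\Ap(H)$. Since $Y\notin\{a_1',\ldots,a_j'\}$ (because $Y=X-\alpha$ with $X,Y\in\PFa(H)$ both arithmetic neighbors, and by the hypothesis $X\notin\{a_1',\ldots,a_j'\}$ one checks $Y$ is likewise not one of the distinguished generators, using an analogue of Corollary~\ref{cor:3.YnotG}), the element $Y-\omega+ta_0$ is not itself in $\PFa(H)$ after we subtract a nontrivial $\omega$, so it does not equal any $a_t'$. By Corollary~\ref{cor:2.+eta} there is $\eta\in\Ap(H)$ with $Y-\omega+ta_0+\eta\in\PFa(H)$, and then $X-\omega+ta_0+\eta=(Y-\omega+ta_0+\eta)+\alpha$ should also lie in $\PFa(H)$ provided the value $v_\alpha(Y-\omega+ta_0+\eta)$ is strictly below $\max v_\alpha(\PFa(H))=a_0-1$, which is exactly where assumption~(2), namely $v_\alpha(a_j')=a_0-1$, is used to control the top of the Ap\'ery set. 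Finally, since $X-\omega,\eta\in\Ap(H)$ and $\Ap(H)$ contains a unique representative in each residue class, $X-\omega+ta_0+\eta\in\Ap(H)$ forces $t=0$, completing the proof.

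The main obstacle I anticipate is the bookkeeping needed to guarantee that $Y-\omega+ta_0+\eta$ cannot be one of the generators $a_t'$ and cannot have $v_\alpha$ equal to $a_0-1$; in Lemma~\ref{prop:3.X-o,Y-o} this was handled by the structural split of $\Ap(H)$ into $\Ap_+$, $\Ap_0$, $\Ap_-$ together with Lemmas~\ref{lem:3.a+a'notinAp} and~\ref{lem:3.l<r}, and here I must supply the analogous (and slightly simpler, since $\Ap_-$ is absent) separation results. The delicate point is that the hypothesis here is weaker than in Lemma~\ref{prop:3.X-o,Y-o}: $\omega$ is only assumed to be an arbitrary element of $\Ap(H)$ with $X-\omega\in\Ap(H)$, rather than a sum $\sum d_l a_l$ with $d_l\le c_l$ drawn from the $\langle a_1,\ldots,a_i\rangle$ block. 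I would therefore make sure the unique-representative argument and the ``$v_\alpha$ additivity'' facts from the preliminaries are applied only in the form valid for general $\omega$, and I expect that the condition $X\notin\{a_1',\ldots,a_j'\}$ is precisely what is needed so that the canonical-expression machinery still applies; verifying that these hypotheses suffice, rather than the stronger ones used before, is the crux of adapting the earlier proof.
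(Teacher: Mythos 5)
Your opening and closing steps coincide with the paper's own proof: you rule out $Y-\omega\in H\setminus\Ap(H)$ exactly as the paper does, reduce to the unique $t\geq 0$ with $Y-\omega+ta_0\in\Ap(H)$, invoke Corollary~\ref{cor:2.+eta} to get $\eta\in\Ap(H)$ with $(Y-\omega+ta_0)+\eta\in\PFa(H)$, and force $t=0$ at the end. The genuine gap is the step you yourself defer as ``the main obstacle'': showing that $(Y-\omega+ta_0)+\eta$ is not the top element $a_j'=p_{n-1}+a_0$ of $\PFa(H)$, which is what licenses adding $\alpha$ and staying inside $\PFa(H)$. Your proposed repair---rebuilding the Section~3 separation machinery (the split $\Ap_+,\Ap_0,\Ap_-$ and analogues of Lemmas~\ref{lem:3.a+a'notinAp}, \ref{lem:3.l<r} and Corollary~\ref{cor:3.YnotG})---cannot work here, for two reasons. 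First, those statements are all proved under the hypothesis $v_\alpha(a_i)+v_{-\alpha}(a_1'')<n-1$ of Theorem~\ref{thm:3.claimA}, which is not among the Section~4 assumptions, so they do not transplant. Second, the route is circular: in the paper, Corollary~\ref{cor:3.YnotG} and the determination of $\PFa(H)$ (Lemma~\ref{lem:3.pfexpression}, whose Section~4 analogue is Lemma~\ref{lem;3.2pfexpress2}) are deduced \emph{from} the $X-\omega$, $Y-\omega$ lemma, so no ``analogue of Corollary~\ref{cor:3.YnotG}'' is available before Lemma~\ref{prop:3.2X-o,Y-o} itself is proved. Moreover, even granting $Y\notin\{a_1',\ldots,a_j'\}$, your inference that therefore ``$Y-\omega+ta_0$ is not itself in $\PFa(H)$\ldots so it does not equal any $a_t'$'' is a non sequitur, and in any case the element that must be kept away from $a_j'$ is $(Y-\omega+ta_0)+\eta$, not $Y-\omega+ta_0$.

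What the paper actually does at this point is a one-line argument using the half of assumption~(2) that your proposal never invokes: $h+(n-1)\alpha+a_0=p_{n-1}+a_0=a_j'$ is a \emph{minimal generator} of $H$. If $(Y-\omega+ta_0)+\eta$ equaled $a_j'$, then $a_j'$ would be written as a sum of the two elements $Y-\omega+ta_0$ and $\eta$ of $H$, contradicting minimal generation (the degenerate cases with a zero summand are checked separately and reduce to $t=0$ or to $\omega=X$). Hence $(Y-\omega+ta_0)+\eta+\alpha=X-\omega+ta_0+\eta\in\PFa(H)$, and since $X-\omega+\eta\in H$ while Ap\'ery elements are minimal in their residue classes, $t=0$. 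So the missing step is not Section~3 bookkeeping to be reconstructed; it is precisely the minimal-generator condition in assumption~(2), which is the reason that condition appears in the hypotheses of Theorem~\ref{thm:3.2claim2}. Citing only $v_\alpha(a_j')=a_0-1$, as you do, is not enough to close the argument.
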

\begin{proof}
    Assume that $Y-\omega\in H\setminus\Ap(H)$.
    Then, there is $b\in H$ such that $v_\alpha(b)=v_\alpha(Y-\omega)$ and $b<Y-\omega$.
    Thus, since $b+\omega<Y$ and $b+\omega\in H$,
    we have $Y\not\in\Ap(H)$.
    This is a contradiction.

    We may assume that $Y-\omega\in\Ap(H)$ or $Y-\omega\not\in H$.
    Thus, there is an integer $t\geq0$ such that $Y-\omega+ta_0\in\Ap(H)$.
    By Corollary \ref{cor:2.+eta}, we have $(Y-\omega+ta_0)+\eta\in\PFa(H)$ for some $\eta\in\Ap(H)$.
    Since $p_{n-1}+a_0=a_j'$ and $G$ is the minimal system of generators,
    it follows that $(Y-\omega+ta_0)+\eta\neq a_j'$.
    Hence, we have
    \begin{equation*}
        (Y-\omega+ta_0)+\eta+\alpha=X-\omega+ta_0+\eta\in\PFa(H).
    \end{equation*}
    $X-\omega+\eta\in H$ implies that $t=0$.
    Thus, we obtain $Y-\omega\in\Ap(H)$.
\end{proof}
\begin{lem}\label{lem:3.2sigmak+a<sigmak+1}
    For $1\leq k <a_0-1$, we have $\sigma_k+\alpha\leq \sigma_{k+1}$.
\end{lem}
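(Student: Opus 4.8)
=== PROOF PROPOSAL ===

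The statement to prove is Lemma \ref{lem:3.2sigmak+a<sigmak+1}: for $1\leq k<a_0-1$, we have $\sigma_k+\alpha\leq_H\sigma_{k+1}$. The plan is to mirror the proof of the corresponding statement in the setting of Theorem \ref{thm:3.claimA}, but to exploit the simpler one-sided structure here: all generators other than $a_0$ lie in the ``$+$'' region, since $v_\alpha$ of the pseudo-Frobenius numbers occupies the top block $\{a_0-(n-1),\ldots,a_0-1\}$ and there is no ``$-$'' part. First I would dispose of the trivial case: if $\sigma_k\in\PFa(H)$, then since the $\PFa(H)$-values form a consecutive run ending at $a_0-1$ and $k<a_0-1$, the next value $\sigma_{k+1}$ also lies in $\PFa(H)$, so $\sigma_{k+1}=\sigma_k+\alpha$ and the inequality is immediate.

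Next I would treat $\sigma_k\notin\PFa(H)$. Here I would invoke Corollary \ref{cor:2.+eta} to produce $\eta\in\Ap(H)$ with $\sigma_k+\eta\in\PFa(H)$. The goal is to show $\sigma_k+\eta\notin\{a_1',\ldots,a_j'\}$, i.e.\ that $\sigma_k+\eta$ is \emph{not} a generator and hence equals some $p_l$ with $l<n-1$, so that $\sigma_k+\eta+\alpha=p_{l+1}\in\PFa(H)$ is again a pseudo-Frobenius element. Granting this, I would conclude as in the earlier lemma: from $v_\alpha(\sigma_k+\eta+\alpha)=v_\alpha(\sigma_{k+1}+\eta)$ together with $\sigma_{k+1}+\eta\in H$ and $\sigma_k+\eta+\alpha\in\Ap(H)$, minimality of Apéry elements in each residue class forces $\sigma_k+\eta+\alpha\leq\sigma_{k+1}+\eta$, whence $\sigma_k+\alpha\leq_H\sigma_{k+1}$.

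The main obstacle is justifying that $\sigma_k+\eta$ is not one of the generators $a_1',\ldots,a_j'$, equivalently that we never get ``stuck'' at the top generator when trying to push up by $\alpha$. The cleanest route is to use that $\sigma_k\notin\PFa(H)$ together with the assumption $v_\alpha(p_{n-1})=v_\alpha(a_j')=a_0-1$: if $\sigma_k+\eta$ were a generator in $\{a_1',\ldots,a_j'\}$, then writing $\sigma_k=(\sigma_k+\eta)-\eta$ with $\eta\in\Ap(H)\setminus\{0\}$ would express a generator as a sum of an Apéry element and $\sigma_k$, and I would argue via Lemma \ref{lem:2.ciai,diai} and the analogue of Corollary \ref{cor:3.YnotG} (the ``$Y\notin G$'' phenomenon established in the $+$ region) that this contradicts $\sigma_k+\eta=p_{n-1}$, the unique top-value element. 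The remaining cases $\sigma_k+\eta=p_l$ for $l<n-1$ pose no difficulty since then $p_{l+1}=p_l+\alpha$ is manifestly in $\PFa(H)$. Care is needed only in verifying that $\eta\neq0$ may be assumed (otherwise $\sigma_k\in\PFa(H)$, already handled) and that the value $v_\alpha(\sigma_k)<a_0-1$, which holds because $k<a_0-1$.
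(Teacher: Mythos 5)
Your proposal is correct and follows the paper's own argument essentially step for step: the same trivial case $\sigma_k\in\PFa(H)$, the same use of Corollary \ref{cor:2.+eta} to produce $\eta$, the same exclusion of $\sigma_k+\eta=p_{n-1}+a_0=a_j'$ via minimality of the generating system, and the same final comparison $\sigma_k+\eta+\alpha\leq\sigma_{k+1}+\eta$. The only (cosmetic) difference is that the paper rules out $\sigma_k+\eta=a_j'$ directly from the fact that a minimal generator cannot be a sum of two nonzero elements of $H$ (here $\sigma_k\neq0$ and $\eta\neq0$), so your detour through Lemma \ref{lem:2.ciai,diai} and an analogue of Corollary \ref{cor:3.YnotG} is unnecessary.
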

\begin{proof}
    If $\sigma_k\in\PFa(H)$, then this is obvious.
    Assme that $\sigma_k\not\in\PFa(H)$.
    By Corollary \ref{cor:2.+eta}, there exists $\eta\in\Ap(H)$ such that $\sigma_k+\eta\in\PFa(H)$.
    Since $p_{n-1}+a_0=a_{n-1}$ and $G$ is the minimal system of generators,
    we have $\sigma_k+\eta\neq p_{n-1}$.
    Thus, it follows that $\sigma_k+\eta +\alpha\in\PFa(H)$.
    Since $v_\alpha(\sigma_k+\alpha)=v_\alpha(\sigma_{k+1})$, 
    we have $\sigma_k+\eta+\alpha\leq \sigma_{k+1}+\eta$.
    Therefore, we conclude that $\sigma_k+\alpha\leq \sigma_{k+1}$.
\end{proof}
\begin{cor}\label{lem:3.2sigmaincreasing}
    If the element $\sigma_k\in\Ap(H)$, then $\sigma_l\not\geq_H a$ for any $a\in G$ with $v_\alpha(a)>v_\alpha(\sigma_l)$.
\end{cor}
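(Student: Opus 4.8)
The plan is to read this Corollary as the Section~4 analogue of Lemma~\ref{lem:3.sigma<a}, stated for a single $\sigma_l$: if $\sigma_l\in\Ap(H)$ then $\sigma_l\not\geq_H a$ whenever $a\in G$ satisfies $v_\alpha(a)>v_\alpha(\sigma_l)$ (I read the hypothesis index as $l$ as well, matching Lemma~\ref{lem:3.sigma<a}). Because here we work in the simpler regime with $\alpha>0$ and no $a''$-generators, the strict monotonicity already packaged in Lemma~\ref{lem:3.2sigmak+a<sigmak+1} does all the work, and I would argue by contradiction using a telescoping estimate rather than repeating the case analysis of Lemma~\ref{lem:3.sigma<a}.

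First I would dispose of the trivial case $l=0$: there $\sigma_0=0$, so $\sigma_0\geq_H a$ would force $-a\in H$, which is impossible for the positive generator $a$; hence the statement is vacuous and we may assume $l\geq 1$. Next, set $m:=v_\alpha(a)$ and note $m>l$ by hypothesis. Since every minimal generator lies in $\Ap(H)$, and $\Ap(H)$ contains exactly one element in each residue class modulo $a_0$, the generator $a$ is precisely $\sigma_m=\min\{b\in H:v_\alpha(b)=m\}$.

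The core of the argument is the telescoping inequality. Suppose for contradiction that $\sigma_l\geq_H a$, that is, $\sigma_l-a\in H$; since $H\subseteq\mathbb{Z}_{\geq0}$ this gives the ordinary integer inequality $\sigma_l\geq a=\sigma_m$. On the other hand, applying Lemma~\ref{lem:3.2sigmak+a<sigmak+1} successively for $k=l,l+1,\ldots,m-1$ (all of which lie in the admissible range $1\leq k<a_0-1$, because $l\geq1$ and $m\leq a_0-1$) yields
\begin{equation*}
    \sigma_m\geq \sigma_{m-1}+\alpha\geq\cdots\geq\sigma_l+(m-l)\alpha.
\end{equation*}
Combining the two estimates gives $\sigma_l\geq\sigma_l+(m-l)\alpha$, so $(m-l)\alpha\leq0$; but $\alpha>0$ and $m>l$ force $(m-l)\alpha>0$, a contradiction. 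Hence $\sigma_l\not\geq_H a$, as claimed.

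I expect the only delicate point to be the bookkeeping of the index range in the telescoping step: one must verify that every $k$ used lies in $\{1,\ldots,a_0-2\}$ so that Lemma~\ref{lem:3.2sigmak+a<sigmak+1} applies, which is exactly guaranteed by $l\geq1$ and the fact that $v_\alpha$ takes values in $\{0,\ldots,a_0-1\}$, whence $m\leq a_0-1$. Everything else is forced by the monotonicity lemma together with the sign hypothesis $\alpha>0$ from the standing assumptions of this section.
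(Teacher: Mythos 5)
Your proof is correct and takes essentially the same route as the paper: the paper's own proof simply observes that the statement is trivial from Lemma~\ref{lem:3.2sigmak+a<sigmak+1} together with $\alpha>0$, and your telescoping inequality $\sigma_m\geq\sigma_l+(m-l)\alpha$ is exactly that monotonicity argument written out in full. Your additional bookkeeping (the $l=0$ case, identifying $a=\sigma_m$, and checking the index range for Lemma~\ref{lem:3.2sigmak+a<sigmak+1}) just makes explicit what the paper leaves implicit.
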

\begin{proof}
    The statement is trivial by Lemma \ref{lem:3.2sigmak+a<sigmak+1} since we assume $\alpha>0$.
\end{proof}
Note that $v_\alpha(a_1)=1$.
\begin{lem}\label{lem;3.2pfexpress2}
    \def\labelenumi{(\theenumi)}
    \begin{enumerate}
        \item There exist integers $c_1,c_2,\ldots,c_i\geq1$ such that
        \begin{equation*}
            \PFa(H)=\left\{a_1+\sum_{l=1}^i(c_l-1)a_l,a_2+\sum_{l=2}^i(c_l-1)a_l,\ldots,c_ia_i,a_1',a_2',\ldots,a_j'\right\}.
        \end{equation*}
        \item $\#\Ap(H)\geq(\sum_{l=1}^i\prod_{m=l}^ic_m)+j+1$.
    \end{enumerate}
\end{lem}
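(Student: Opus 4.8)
The plan is to run the proofs of Lemma~\ref{lem:3.pfexpression} and Corollary~\ref{cor:3.Apnumber} almost verbatim, replacing Lemma~\ref{prop:3.X-o,Y-o} and Lemma~\ref{lem:3.sigma<a} by their Section~4 analogues Lemma~\ref{prop:3.2X-o,Y-o} and Corollary~\ref{lem:3.2sigmaincreasing}. The simplification in the present situation is that there is no ``minus'' family of generators: since $v_\alpha(a_j')=a_0-1$ is the maximal value attained by $v_\alpha$, every $\sigma_k$ satisfies $v_\alpha(\sigma_k)\leq v_\alpha(a_j')$, so the minimal generators split only into $a_1,\dots,a_i$ and $a_1',\dots,a_j'$. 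I would write $\PFa(H)=\{q_1,\dots,q_{n-1}\}$ with $q_l=q_1+(l-1)\alpha$; then $v_\alpha(q_l)=a_0-n+l$ is strictly increasing and $q_{n-1}=h+(n-1)\alpha+a_0=a_j'$.

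For part~(1), I would first reprove the Claim of Lemma~\ref{lem:3.pfexpression}: if $q_l\in\langle a_1,\dots,a_i\rangle$ then $\min\{s:a_s\leq_H q_l\}>\min\{s:a_s\leq_H q_{l-1}\}$. Writing $q_l=a_s+\omega$ with $s$ the minimal index and $\omega\in\langle a_s,\dots,a_i\rangle$, Lemma~\ref{prop:3.2X-o,Y-o} gives $q_{l-1}-\omega=a_s-\alpha\in\Ap(H)$; this element is nonzero because $\alpha\notin H$ (Lemma~\ref{lem:2.alphanotinH}) and has $v_\alpha$-value $s-1$, so Corollary~\ref{lem:3.2sigmaincreasing} forces $a_s-\alpha\in\langle a_1,\dots,a_{s-1}\rangle$, whence $q_{l-1}\in\langle a_1,\dots,a_i\rangle$ with strictly smaller minimal index.

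The decisive step is to locate the non-generators among the $q_l$. Since $G\cap\PFa(H)=\{a_1',\dots,a_j'\}$, exactly $j$ of the $q_l$ are generators and the other $i=(n-1)-j$ are not. If a non-generator $q_l$ admitted a representation involving some $a_t'$, then $a_t'\leq_H q_l$ with $q_l\neq a_t'$, contradicting the maximality of $a_t'$ in $\Ap(H)$ (Lemma~\ref{lem:2.PFexpression}); as no representation of an Ap\'ery element uses $a_0$, each non-generator $q_l$ lies in $\langle a_1,\dots,a_i\rangle$. By the Claim the index set $\{l:q_l\in\langle a_1,\dots,a_i\rangle\}$ is downward closed, and since the generators $a_s$ with $s\leq i$ are not pseudo-Frobenius it has exactly $i$ elements, hence equals $\{1,\dots,i\}$. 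Thus $q_1,\dots,q_i\in\langle a_1,\dots,a_i\rangle$ while $q_{i+1}=a_1',\dots,q_{i+j}=a_j'$, and the downward induction of Lemma~\ref{lem:3.pfexpression} (using $\min\{s:a_s\leq_H q_m\}=m$ together with $v_\alpha(q_m-\omega)<v_\alpha(q_{m+1}-\omega)=v_\alpha(a_{m+1})$) yields $c_1,\dots,c_i\geq 1$ with $q_m=a_m+\sum_{l=m}^i(c_l-1)a_l$.

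For part~(2), I would set $B_+=\{a_m+\sum_{l=m}^i d_la_l:1\leq m\leq i,\ 0\leq d_l\leq c_l-1\}$. Iterating Lemma~\ref{prop:3.2X-o,Y-o} shows $B_+\subseteq\Ap(H)$, and the argument of Lemma~\ref{lem:3.canonicalexpress}, which again rests on the maximality of pseudo-Frobenius elements, shows that distinct canonical expressions give distinct elements, so $\#B_+=\sum_{l=1}^i\prod_{m=l}^i c_m$. As $B_+$, $\{a_1',\dots,a_j'\}$ and $\{0\}$ are pairwise disjoint subsets of $\Ap(H)$, the bound in~(2) follows. I expect the main obstacle to be the localization step above---showing that all $i$ non-generator pseudo-Frobenius numbers lie in $\langle a_1,\dots,a_i\rangle$ and form the initial segment $q_1,\dots,q_i$---since this is precisely where the absence of a minus region and the maximality of the $a_t'$ must be combined; once this distribution is in hand, the canonical form and the counting are routine transcriptions of Section~3.
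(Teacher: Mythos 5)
Correct, and it is exactly the paper's approach: the paper's entire proof of this lemma is the remark that it is ``carried out in the same way as in the proof of Lemma \ref{lem:3.pfexpression} and Corollary \ref{cor:3.Apnumber}'', and your transcription---with Lemma \ref{prop:3.2X-o,Y-o} and Corollary \ref{lem:3.2sigmaincreasing} standing in for Lemmas \ref{prop:3.X-o,Y-o} and \ref{lem:3.sigma<a}, and the maximality of the $a_t'$ (Lemma \ref{lem:2.PFexpression}) used to place the $i$ non-generator elements of $\PFa(H)$ inside $\langle a_1,\ldots,a_i\rangle$---is a faithful instantiation of that sketch. Two harmless slips worth fixing: $a_s-\alpha$ has $v_\alpha$-value $v_\alpha(a_s)-1$, not $s-1$ (in general $v_\alpha(a_s)\neq s$; only $v_\alpha(a_1)=1$ is guaranteed, cf.\ the paper's last example where $v_\alpha(a_2)=3$), and $B_+\subseteq\Ap(H)$ follows most cleanly from Lemma \ref{lem:2.ciai,diai} (each element of $B_+$ is $\leq_H$ an element of $\PFa(H)$) rather than from iterating Lemma \ref{prop:3.2X-o,Y-o}, whose hypothesis $X\in\PFa(H)\setminus\{a_1',\ldots,a_j'\}$ fails at the top step $X=q_{i+1}=a_1'$; neither affects the validity of the argument.
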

The proof of Lemma \ref{lem;3.2pfexpress2} is carried out in the same way as in the proof of Lemma \ref{lem:3.pfexpression} and Corollary \ref{cor:3.Apnumber}.
\begin{proof}[Proof of Theorem \ref{thm:3.2claim2}]
    Since $h+\alpha+a_0=a_1+\sum_{l=1}^i(c_l-1)a_l$ and $v_\alpha(a_1)=v_\alpha(\alpha)=1$,
    we have $v_\alpha(h)=v_\alpha(\sum_{l=1}^i(c_l-1)a_l)$.
    Since $\alpha\not \in H$ by Lemma \ref{lem:2.alphanotinH}, we have $\alpha<a_1$.
    Thus, we obtain $\sigma_{v_\alpha(h)}=\sum_{l=1}^i(c_l-1)a_l<h+a_0$.
    In other words, there exists an integer $c_0$ such that $h=\sum_{l=1}^i(c_l-1)a_l+c_0a_0$ with $c_0>0$.
    By the assumption of Theorem \ref{thm:3.2claim2}, $v_\alpha(a_j')=a_0-1$.
    Thus, we have $h+n\alpha+a_0=a_j'+\alpha+a_0\equiv0\mod a_0$.
    There exists an integer $c_0'>0$ such that $a_j'+\alpha+a_0=c_0'a_0$.

   Let
    \begin{equation*}
        M_+'=
        \begin{pmatrix}
            X_0^{c_0}\prod_{l=1}^iX_l^{c_l-1}&X_1^{c_1}\prod_{l=2}^iX_l^{c_l-1}&\cdots&X_{i-1}^{c_{i-1}}X_i^{c_i-1}&X_i^{c_i}\\
            X_1^{c_1}\prod_{l=2}^iX_l^{c_l-1}&X_2^{c_2}\prod_{l=3}^iX_l^{c_l-1}&\cdots&X_i^{c_i}&X_1'
        \end{pmatrix}
    \end{equation*}
    \begin{equation*}
        M_0=
        \begin{pmatrix}
            X_1'&X_2'&\cdots &X_{j-1}'&X_j'\\
            X_2'&X_3'&\cdots &X_j'&X_0^{c_0'}
        \end{pmatrix}
    \end{equation*}
    By construction of $M_+',M_0$, in each column if we denote the top entry by $f$ and the bottom entry by $g$,
    then $\deg\varphi(f)-\deg\varphi(g)=\alpha$ is constant.
    Since $M:=(M_+~M_0)$ is obtained by dividing each column of $(M_+~M_0)$ by the greatest common devisor of its two entries,
    by Lemma \ref{lem:2.I2M<IH}, we have $\mathrm{I}_2(M)\subseteq I_H$.
    
    We now prove $\mathrm{I}_2(M)=I_H$ using Lemma \ref{prop:2.JinIH}.
    By comparing the dimension of $S/(\mathrm{I}_2(M)+(X_0))$ and $a_0=\#\Ap(H)$ using Lemma \ref{lem;3.2pfexpress2},
    we can prove $\dim_k(S/(\mathrm{I}_2(M)+(X_0)))=a_0$ in the same way as in the proof of Theorem \ref{thm:3.claimA}.
    By Lemma \ref{prop:2.JinIH}, we conclude $\mathrm{I}_2(M)=I_H$.
\end{proof}

We end the paper with a few examples.
\begin{exam}
    Let $H=\langle a_0=13,a_1=14,a_2=15,a_1'=46,a_2'=47,a_1''=24,a_2''=25\rangle$.
    Then
    $\PF(H)=\{31,32,33,34,35,36\}$,
    so it satisfies the assumption of Theorem \ref{thm:3.claimA}
    with $h=30$ and $\alpha=1$.
    Thus, we have
    \begin{equation*}
        I_H=\mathrm{I}_2\left(\begin{matrix}
    X_0&X_1&X_2^3&X_1'&X_2'&X_1''&X_2''\\
    X_1&X_2&X_1'&X_2'&X_1''^2&X_2''&X_0^2
    \end{matrix}\right).
    \end{equation*}
\end{exam}
\begin{exam}
    Let $H=\langle a_0=9,a_1=19,a_2=48,a_1'=106,a_2'=116\rangle$.
    Then
    $\PF(H)=\{77,87,97,107\}$.
    so it satisfies the assumption of Theorem \ref{thm:3.2claim2}
    with $h=67$ and $\alpha=10$.
    Thus, we have
    \begin{equation*}
        I_H=\mathrm{I}_2\left(\begin{matrix}
    X_0&X_1^2&X_2^2&X_1'&X_2'\\
    X_1&X_2&X_1'&X_2'&X_0^{14}
    \end{matrix}\right).
    \end{equation*}
\end{exam}

\Addresses
\end{document}